\definecolor{lightgray}{gray}{0.9}
\definecolor{lightblue}{RGB}{220,230,241}
\definecolor{lightgreen}{RGB}{222,237,227}
\definecolor{lightorange}{RGB}{255,235,205}
\definecolor{lightpurple}{RGB}{235,222,240}
\definecolor{lightred}{RGB}{255,220,220}
\setlist[itemize,1]{label=$\bullet$}
\setlist[itemize,2]{label=$\bullet$}
\setlist[itemize,3]{label=$\bullet$}
\setlist[itemize,4]{label=$\bullet$}
\setlist[itemize,5]{label=$\bullet$}
\setlist[itemize,6]{label=$\bullet$}
\setlist[itemize,7]{label=$\bullet$}
\setlist[itemize,8]{label=$\bullet$}
\setlist[itemize,9]{label=$\bullet$}
\renewcommand{\k}{\Bbbk}
\newcommand{\nc}{\newcommand}
\newcommand{\ot}{\otimes}
\newcommand{\ydh}{{}^{H}_{H}\mathcal{YD}}
\newcommand{\rg}{\rangle}
\renewcommand{\lg}{\langle}
\def\qb{\mathfrak{q}}
\newcommand{\bq}{\mathfrak{q}}
\newcommand{\id}{\operatorname{id}}
\newcommand{\gr}{\operatorname{gr}}
\newcommand{\Ss}{\mathcal{S}}
\newcommand{\Alg}{\operatorname{Alg}}
\newcommand{\ord}{\operatorname{ord}}
\newcommand{\Cleft}{\operatorname{Cleft}}
\renewcommand{\mod}{\operatorname{-mod}}
\newcommand{\N}{\mathbb{N}}
\newcommand{\Z}{\mathbb{Z}}
\newcommand{\G}{\mathbb{G}}
\newcommand{\s}{\mathbb{S}}
\newcommand{\tx}{\texttt{x}}
\newcommand{\tb}{\texttt{b}}
\newcommand{\I}{\mathbb{I}}
\newcommand{\BB}{\mathbb{B}}
\newcommand{\EE}{\mathbb{E}}
\newcommand{\Bq}{\mathfrak{B}}
\newcommand{\mE}{\mathcal{E}}
\newcommand{\eps}{\epsilon}
\newcommand{\rhu}{\rightharpoonup}
\def\pf{\begin{proof}}
	\def\epf{\end{proof}}
\def\bs{\boldsymbol}
\renewcommand{\H}{\operatorname{H}}
\renewcommand{\Z}{\operatorname{Z}}
\newcommand{\B}{\operatorname{B}}
\newcommand{\Dchaintwo}[3]{\xymatrix@C-4pt{\overset{#1}{\underset{\ }{\circ}}\ar
		@{-}[r]^{#2}
		& \overset{#3}{\underset{\ }{\circ}}}}
\nc{\ben}{\begin{enumerate}[(i)]}
	\nc{\een}{\end{enumerate}}
\numberwithin{equation}{section}
\theoremstyle{plain}
\newtheorem{theorem}{Theorem}[section]
\newtheorem{lemma}[theorem]{Lemma}
\newtheorem{proposition}[theorem]{Proposition}
\newtheorem{corollary}[theorem]{Corollary}
\newtheorem{claim}[theorem]{Claim}
\newtheorem{definition}[theorem]{Definition}
\theoremstyle{remark}
\newtheorem{remark}[theorem]{Remark}
\newtheorem*{acknowledgement*}{Acknowledgement}
\newtheorem{example}[theorem]{Example}
\title[Hopf 2-cocycles of type $A_2$]{Hopf 2-cocycles of type $A_2$}
\author{José Ignacio Sánchez}
\address{J.I.S.: FaMAF-CIEM (CONICET),
		Medina Allende S/N,
		Universidad Nacional de C\'ordoba,
		Ciudad Universitaria, C\'ordoba (X5000HUA),
		República Argentina. }
\email{jose.ignacio.sanchez@mi.unc.edu.ar}
\thanks{\noindent 2020 \emph{Mathematics Subject Classification.}
	16T05. \newline The work was partially supported by CONICET,
	FONCyT-ANPCyT, Secyt (UNC)}
\keywords{Hopf algebras, Deformations, Cocycles.}
\begin{document}
	
	\begin{abstract}
We compute the Hopf 2-cocycles involved in the classification of pointed Hopf algebras of diagonal type $A_2$. 
When the quantum Serre relations are deformed, we characterize those cocycles that can be recovered from Hochschild cohomology, via exponentiation.
We identify some hypotheses that allow us to present general formulas that apply in our setting.
	\end{abstract}
	
	\maketitle
	
	\section{Introduction}
The classification program of finite-dimensional pointed Hopf algebras over an abelian group has remained an active area of research for the last two decades.

A decisive tool to tackle this problem has been the lifting method, developed by Andruskiewitsch and Schneider \cite{AS98}.
In this sense, once a finite abelian group $G$ is fixed, two decisive steps of this method are: to determine the family $\mathfrak{F}_G$
of all Nichols algebras $\Bq$ (of diagonal type) of finite dimension in the category $\ydh$ of Yetter–Drinfeld modules over $H=\k G$; and then to compute all their liftings or deformations over $H$, that is,
those Hopf algebras $L$ whose coradical is isomorphic to $H$,  and whose graded Hopf algebra associated to the coradical filtration, $\gr L$, coincides with the Radford biproduct $\Bq\#H$ for some $\Bq$ in $\mathfrak{F}_G$.

After the classification of Nichols algebras of diagonal type with finite root system \cite{H09} and their explicit presentations \cite{An1,An2} became available, only the computation of the deformations remained.
In this direction, in \cite{AAGMV} a technique was developed to produce liftings via cocycle deformations. This technique was decisive to determine all the liftings
of Nichols algebras of Cartan type $A_\theta$ in \cite{AAG}, and later in \cite{AnG}, the classification of liftings of Nichols algebras of diagonal type over an arbitrary abelian group was completed.

In this context, the results in \cite{AnG} establish that if $L$ is a pointed Hopf algebra over an abelian group algebra $H$ and $\gr L\simeq \Bq\# H$, then $L$ is a cocycle deformation of $A=\Bq\# H$.
That is, $L=A$ as coalgebras, and the multiplication $m_L\colon L\ot L\to L$ in $L$ is obtained by deforming the multiplication $m_A\colon A\ot A\to A$ via the convolution $m_L=\sigma\ast m_A\ast\sigma^{-1}$. Here, $\sigma\colon A\ot A\to \k$ is a Hopf 2-cocycle for $A$, that is, it is convolution-invertible and satisfies
\begin{align*}
	\sigma(x_{(1)}, y_{(1)}) \sigma(x_{(2)} y_{(2)}, z) &=
	\sigma(y_{(1)}, z_{(1)}) \sigma(x, y_{(2)}z_{(2)}), \quad x,y,z\in A.
\end{align*}


An analogous situation arises in all known examples of finite-dimensional pointed Hopf algebras over $H=\k G$, where $G$ is not necessarily abelian, see~\cite{GaM,GIM,GIV1,GIV2}. 
Furthermore, the same behavior is observed for copointed Hopf algebras over $H=\k^G$, cf.~\cite{AV2,FGM,GIV2}.

These facts follow from the definition of a suitable cleft object $E$ for $A$ such that $L\simeq L(E,A)$, the left Hopf algebra introduced in \cite{Sc96}. Thus, $L$ is a cocycle deformation of $A$ and the Hopf 2-cocycle is bypassed.
Recall that a cleft object for $A$ is a certain $A$-comodule algebra $E$ endowed with a convolution-invertible comodule isomorphism $\gamma\colon A \to E$.
The remarkable fact is that once $\gamma$ is described, one can define a Hopf 2-cocycle for $A$ via
\begin{align}\label{eqn:sigmagamma-intro}
	\sigma(a,b)=\gamma(a_{(1)})\gamma(b_{(1)})\gamma^{-1}(a_{(2)}b_{(2)}), \qquad a,b\in A.
\end{align}
Thus, $L\simeq L(E,A)$ turns out to be a cocycle deformation of $A$ by the Hopf cocycle $\sigma$ associated with $\gamma$ as in \eqref{eqn:sigmagamma-intro}.


A key feature is that these data are completely determined by their counterparts in the braided setting. 
Indeed, $(E,\gamma)$ factors as $E\simeq \mE\# H$ and $\gamma=\gamma_{\mE}\#\id_H$ in such a way that $(\mE,\gamma_{\mE})$ is a cleft object for $\Bq$ in $H\mod$. 
In turn, $\sigma=\sigma_{\Bq}\#\eps$, where $\sigma_{\Bq}\colon \Bq\ot\Bq\to \k$ is $H$-linear braided Hopf 2-cocycle, see \S\ref{sec:braided-context}.

Given the state of the art we have mentioned, it is clear that the classification problem of finite-dimensional (co)pointed Hopf algebras is intimately connected with Hopf 2-cocycles, so the study and description of these in each case is an interesting problem.

\subsection{Some advances}
One of the first approaches to the computation of cocycles, already present in \cite{GrM}, is based on Hochschild cohomology. The central idea is that, under certain conditions, a Hochschild $2$-cocycle $\eta\colon A\ot A\to\k$ can give rise to a Hopf $2$-cocycle through the exponential
$\sigma := e^{\eta} = \sum_{k \geq 0} \frac{\eta^{\ast k}}{k!}$.

This procedure, developed in detail in \cite{GaM}, made it possible to obtain explicit cocycles associated with the classification of pointed Hopf algebras over dihedral groups $\mathbb{D}_m$ with $m=4t\geq12$, over the symmetric group $\s_3$, and some families over $\s_4$. However, the same work shows that not all cocycles can be obtained in this way, which highlights the existence of a class of “pure” cocycles, inaccessible to this method. The copointed case over $\mathbb{D}_m$ with $m=4t\geq12$ is treated in \cite{FGM}, where the Hopf cocycles are also constructed through exponentials of Hochschild $2$-cocycles.

On the other hand, in \cite{GIS1} the authors develop a theoretical framework to address the problem of computing the Hopf cocycles arising in the liftings obtained by applying the technique given in \cite{AAGMV}.
Moreover, a complete description of the Hopf cocycles associated with the deformations of Nichols algebras of Cartan type $A_2$ with parameter $q=-1$ is provided, showing that these are generically pure. Similarly, following this strategy, in \cite{GIS2} they recover and describe the Hopf cocycles associated with the (co)pointed deformations over $\s_3$, reaching similar conclusions about their purity. Thus, the limitation of Hopf cocycles that arise as exponentials of Hochschild $2$-cocycles was made evident.


\subsection{Results}
We provide the explicit description of the Hopf 2-cocycles involved in the liftings of Cartan type $A_2$, with parameter $q$, a primitive root of unity of order $N\geq 3$, {\it cf.}~\cite{GIS1}. 
In this direction, in Section \ref{sec:Good-Setting} we identify a framework that leads to general formulas for the section $\gamma$ and the associated cocycle $\sigma$, see Theorem \ref{thm:H12}.

We apply these technical results to compute the Hopf cocycles involved in the classification of pointed Hopf algebras of type $A_2$.
More precisely,
let 
\[
\qb=\left(\begin{smallmatrix}
	q&q_{12}\\q_{21}&q
\end{smallmatrix}\right)
\]
be a braiding of Cartan type $A_2$, where $q$ is a primitive root of unity of order $N\geq 3$, and consider a principal YD-realization of $\qb$ in $\ydh$ for a semisimple Hopf algebra $H$. When $N=2$, these cocycles were characterized in \cite{GIS1}.

Let $\Bq_\bq$ be the Nichols algebra associated to $\qb$. Recall that the deformations of $\Bq_\qb$ over $H$ are obtained as the left Hopf algebras
$\mathfrak{u}_\qb(\bs\lambda)= L(\mE_{\bs\lambda}\#H, \Bq_\qb\# H)$, with $\mE_{\bs\lambda}$ a $\Bq_\qb$-cleft object in $H\mod$, where $\bs\lambda=(\lambda_1,\lambda_2,\lambda_{12},\lambda_{112},\lambda_{122})\in\k^5$ with certain restrictions (depending on $H$), see \S \ref{sec:liftA2}.
This splits the deformations into generic liftings (where $\lambda_{112}=\lambda_{122}=0$) and atypical liftings (where $N=3$ and $\bq=\begin{psmallmatrix}
	q&q\\q &q
\end{psmallmatrix}$), see Lemma \ref{lem:AAG}. We deal with each case separately, in Sections \ref{sec:general} and \ref{sec:N3} respectively.
For each case, we:
\begin{itemize}
	\item define an explicit section $\gamma_{\bs\lambda}\colon\Bq_\bq\to\mE_{\bs\lambda}$, see Corollaries \ref{cor:gamma-formula-QS-no-deformed} and \ref{cor:section-QS-deformed},
	\item recover the associated braided Hopf 2-cocycle $\sigma_{\bs\lambda}$ and compute its values on a basis of $\Bq_\qb$, see Theorems \ref{thm:cocycle-generic} and \ref{thm:cocycle-quantum-serre-deformed}.
\end{itemize}

On the other hand, it is important to mention that we have learned from our previous work in \cite{GIS1,GIS2} that the range of Hopf cocycles that can be reached via this procedure is quite scarce. Hence we do not pursue this direction here in full generality.
We do, however, investigate this for the atypical liftings, as this case presents a (subtle) novelty with respect to those in loc.cit., namely that the quantum Serre relations can be deformed. We know from \cite{AAG} that this characteristic has a big impact on the combinatorics within the presentation of the deformations.
Nevertheless, we anticipate the reader that the space of Hopf cocycles obtained via exponentiation is again very limited. In fact, we cannot even reach some of the mono-parametric cocycles (when the parameter is associated to a quantum Serre relation).



The following example summarizes our results in the atypical case, see Theorem \ref{thm:cocycle-quantum-serre-deformed} and Proposition \ref{pro:Conclusion}.
\begin{example}\label{exa:intro}
	Fix $N=3$, $\bq=\begin{psmallmatrix}
		q&q\\q &q
	\end{psmallmatrix}$. 
	The nonzero values {\it in the first rows} of a cocycle $\sigma_{\bs\lambda}\colon \Bq\ot\Bq\to\k$ are given by the table:
	\begin{center}
			\begin{tabular}{|c|c|c|c|c|c|c|}
				\hline
				$\sigma_{\bs\lambda}$  & $x_2^2$ & $x_{12}$ & $x_1^2$ & $x_2x_{12}^2$ & $x_2x_{12}x_1^2$& $x_{12}^2x_1$  \\
				\hline
				$x_2$  & $\lambda_2$ & $0$ & $0$ & $(q-q^2)\lambda_2\lambda_{112}$ & $(q-q^2)\lambda_2\lambda_1$ & $3q^2\lambda_2\lambda_1$ \\
				\hline
				$x_1$  & $\lambda_{122}$ & $\lambda_{112}$ & $\lambda_1$ & $ (q-q^2)\lambda_{112}\lambda_{122}+\lambda_{12}$ & $ (q-q^2)\lambda_1\lambda_{122}$ & $3q^2\lambda_1\lambda_{122}$\\
				\hline
			\end{tabular}
		\end{center}
		Moreover, 
		$\sigma_{\bs\lambda}$ is an exponential of a Hochschild 2-cocycle if and only if either:
		\begin{enumerate}[leftmargin=*]
			\item[(a)] $\lambda_{112}=\lambda_{122}=0$ and at most a single parameter $\lambda_p\in\{\lambda_1,\lambda_2,\lambda_{12}\}$ is non-zero.
			\item[(b)] $\lambda_{112}\lambda_{122}\neq 0$ and $\lambda_1=\tfrac{1}{3}\tfrac{\lambda_{112}^2}{\lambda_{122}}$, $\lambda_2=\tfrac{1}{3}\tfrac{\lambda_{122}^2}{\lambda_{112}}$, $ \lambda_{12}=\tfrac{1}{3}(q^2-q)\lambda_{112}\lambda_{122}$.
		\end{enumerate} 
	\end{example}
	
	We use computer program \cite{GAP} for come of our calculations, the corresponding scripts are uploaded in the author's repository in  \href{https://github.com/JoseIgnacio25/Hopf-cocycles-of-Cartan-type-A2}{github}.
	
	Consequently, together with \cite{GIS1}, we obtain the complete description of the Hopf cocycles associated with the liftings of Cartan type $A_2$. We believe that this result could serve as the base step to advance toward the case of Cartan type $A_\theta$ with $\theta\geq 3$, and subsequently to the other Cartan types.

\subsection*{Acknowledgments}
The author would like to sincerely thank his Ph.D. advisor Agustín García Iglesias for his continuous guidance throughout this work. His careful reading of the manuscript, thoughtful feedback, and help with editing were extremely valuable.

\section{Preliminaries}
We work over an algebraically closed field $\k$ of characteristic zero. 
For $\theta\in\N$, we set $\I_\theta\coloneqq\{1,\dots,\theta\}\subset\N$ and $\I_\theta^\circ=\I_\theta\cup\{0\}$.
Let $\mathbb{G}_\theta$ denote the group of $\theta$-th roots of unity in $\k$ and $\mathbb{G}_\theta'$ the subset of primitive roots.

Let $(H,m,\Delta)$ be a Hopf algebra with bijective antipode. We use Sweedler’s notation for comultiplication and coactions, and denote the counit by $\varepsilon$. 
We write $G(H)$ for the set of group-like elements of $H$, and $P(H)$ for its primitive elements. We also denote by $U(H^\ast)$ the group of convolution unit of $H$.
Let $\ydh$ denote the (braided) category of Yetter-Drinfeld modules over $H$.

If $B$ is any graded (connected) Hopf algebra and $a\in B$ is a homogeneous element of positive degree, then we define
$\underline{\Delta}(a)\coloneqq \Delta(a)-a\ot 1-1\ot a$. 
We set $\underline{\Delta}_{|B_0}=0$ and write $\underline{\Delta}(a)=a_{\underline{(1)}}\ot a_{\underline{(2)}}$.

\subsubsection*{Gaussian binomial coefficients}\label{gaussiano}
Let $\mathbb{Z}[\bs q]$ be the polynomial algebra in $\bs q$, and consider the gaussian coefficients
\begin{align*}
	\binom{n}{k}_{\bs q}=\frac{(n)!_{\bs q}}{(k)!_{\bs q}(n-k)!_{\bs q}},
\end{align*}
where $(n)!_{\bs q}=(n)_{\bs q}(n-1)_{\bs q}\dots (2)_{\bs q}(1)_{\bs q}$ and $(n)_{\bs q}=1+{\bs q}+\dots+{\bs q}^{n-1}$, for $n\in \N$, $1\leq k\leq n$. As usual, we define $(0)!_{\bs q}=1$.

Moreover, we have the identities for $1\leq k\leq n$,
\begin{align}\label{eqn:binomial-gaussian-identities-1}
	&{\bs q}^k\binom{n}{k}_{\bs q}+\binom{n}{k-1}_{\bs q}=\binom{n}{k}_{\bs q}+{\bs q}^{n+1-k}\binom{n}{k-1}_{\bs q}=
	{\bs q}^k\binom{n+1}{k}_{\bs q}.
\end{align}
By induction on $n$, using \eqref{eqn:binomial-gaussian-identities-1}, we can see that $\binom{n}{k}_{\bs q}\in\mathbb{Z}[\bs q]$. Now suppose that $B$ is an associative algebra over $\k$ and $q\in\k$; then the specialization of the polynomial $\binom{n}{k}_{\bs q}$ is denoted by $\binom{n}{k}_q$, and we have the following: if $x,y\in B$ $q$-commute, that is, $xy=qyx$, then for $n\in\N$ the following $q$-binomial formula holds:
\begin{align}\label{eqn:binomial-formula}
	(x+y)^n=\sum_{k=0}^n\binom{n}{k}_qy^kx^{n-k}.
\end{align}
Note that if $q$ is a primitive $N$-th root of unity, then $\binom{N}{k}_q=0$ for each $1\leq k< N$. See \cite[Appendix]{AS00} for further details.

\subsection{Cleft objects and Hopf 2-cocycles}\label{sec:cocycles} 
A (right) {\it cleft object} for $H$ is a right $H$-comodule algebra $E$ such that $E^{coH}=\k$ and there exists a convolution-invertible comodule isomorphism $\gamma\colon H\to E$. This map is called a {\it section} when $\gamma(1)=1$.
The set of $H$-cleft objects, up to isomorphism, is denoted by $\Cleft(H)$.

A {\it Hopf 2-cocycle} for $H$ is a convolution-invertible linear map $\sigma\colon H\ot H\to \k$ satisfying
\begin{align}\label{eqn:cocycle-condition}
	\sigma(x_{(1)},y_{(1)})\sigma(x_{(2)}y_{(2)},z)&=\sigma(y_{(1)},z_{(1)})\sigma(x,y_{(2)}z_{(2)}),\\\label{eqn:normalized}
	\sigma(x,1)&=\eps(x)=\sigma(1,x),
\end{align}
for all $x,y,z\in H$. The set of Hopf 2-cocycles for $H$ is denoted by $Z^2(H)$.

Given $\sigma,\sigma'\in Z^2(H)$, we say that $\sigma$ and $\sigma'$ are {\it cohomologous} and write $\sigma\sim\sigma'$ if there exists a convolution unit $\alpha\in U(H^\ast)$ such that $\sigma'=\alpha\rightharpoonup\sigma$, where
\begin{align}\label{eqn:action-on-cocycles}	
	\alpha\rightharpoonup\sigma(x,y)=\alpha(x_{(1)})\alpha(y_{(1)})\sigma(x_{(2)},y_{(2)})\alpha^{-1}(x_{(3)}y_{(3)}),\quad x,y\in H.
\end{align}

\subsubsection{Cocycle deformations}
Let $\sigma\in Z^2(H)$. The convolution $m_\sigma=\sigma\ast m\ast\sigma^{-1}$ defines a new associative multiplication on $H$, so that $H_\sigma\coloneqq(H,m_\sigma,\Delta)$ is again a Hopf algebra. We refer to $H_\sigma$ as a {\it cocycle deformation} of $H$.
On the other hand, $H_{(\sigma)}=(H,\sigma\ast m)$ is a cleft object for $H$, with coaction induced by the comultiplication $\Delta\colon H_{(\sigma)}\to H_{(\sigma)}\ot H$. In fact, every cleft object arises in this way: If $(E,\gamma)\in \Cleft(H)$, then the formula
\begin{align}\label{eqn:recuperation-formula}
	\sigma(x,y)=\gamma(x_{(1)})\gamma(y_{(1)})\gamma^{-1}(x_{(2)}y_{(2)}), \quad x,y\in H.
\end{align}
defines a Hopf 2-cocycle for $H$, such that $E\simeq H_{(\sigma)}$. Moreover, it follows that the left Hopf algebra introduced in \cite{Sc96} satisfies $L(H_{(\sigma)},H)\simeq H_\sigma$.

\subsubsection{Exponential cocycles}
If $H=\bigoplus_{n\geq 0}H_n$ is a graded Hopf algebra, we recall the well-known characterization of $\Z^2(H,\k)$, the set of Hochschild 2-cocycles in $H$ with trivial coefficients, that is, a linear map $\eta\colon H\ot H\to \k$ lies in $\Z^2(H,\k)$ iff it satisfies
\begin{align*}
	\eta(1,a)=\eta(a,1)=0;\qquad \eta(a,bc)=\eta(ab,c),\qquad a,c\in H^+, b\in H.
\end{align*}
Here $H^+\coloneqq\bigoplus_{n>0}H_n$.

On the other hand, the set of 2-coboundaries $\B^2(H,\k)\subset\Z^2(H,\k)$ is characterized by those $\beta\in \Z^2(H,\k)$ for which there exists a linear morphism $f_\beta\colon H\to \k$ such that $f_\beta(1)=\beta(1,1)$ and $\beta(a,b)=-f_\beta(ab)$, $a,b\in H^+$.

Now, if $\eta\in\Z^2(H,\k)$ is a Hochschild 2-cocycle, then under certain conditions we can define a Hopf 2-cocycle by the exponential $e^\eta=\sum_{k\geq 0}\frac{1}{k!}\eta^{\ast k}$; see \cite{GaM} for a detailed discussion.
A Hopf 2-cocycle is called {\it pure} when it is not cohomologous to $e^\eta$ for any $\eta\in\Z^2(H,\k)$.

\subsection{Nichols algebras and braided cocycles}\label{sec:braided-context}
Let $V\in \ydh$ be a Yetter-Drinfeld module. A braided graded Hopf algebra $\mathfrak{B}=\bigoplus_{n\geq0}\Bq_n\in\ydh$ is called a Nichols algebra for $V$ if $\k\simeq \Bq_0$, $V\simeq \Bq_1\in \ydh$, $P(\mathfrak{B})=\Bq_1$, and $\Bq$ is generated as an algebra by $\Bq_1$. The usual notation is $\Bq=\Bq(V)$. Note that, by definition, $\Bq(V)$ is a quotient of the tensor algebra $T(V)$. 
A pre-Nichols algebra $\hat{\Bq}\in\ydh$ for $V$ is an intermediate quotient $T(V)\twoheadrightarrow \hat{\Bq}\twoheadrightarrow \Bq(V)$. See \cite{AS} for more details.

For a Nichols algebra $\Bq \in \ydh$, we denote by $Z^2(\mathfrak{B})^H$ the set of (braided) Hopf 2-cocycles for $\Bq$  that are $H$-linear, that is, $\sigma\colon \Bq \ot \Bq \to \k \in Z^2(\mathfrak{B})^H$ iff \eqref{eqn:cocycle-condition} and \eqref{eqn:normalized} hold, and 
$$\sigma(h_{(1)}\cdot a, h_{(2)}\cdot b)=\eps(h)\sigma(a,b), \qquad \text{for all } a,b\in \Bq.$$
Similarly, $\Z^2(\Bq,\k)^H$ (resp. $\B^2(\Bq,\k)^H$) denotes the set of Hochschild 2-cocycles (resp. 2-coboundaries) for $\Bq$ that are $H$-linear. 

We write 
$\Cleft_H(\Bq)$ for the set of isoclasses of cleft objects for $\Bq$ in $H\mod$.

\begin{remark}\label{rem:primera-linea}
	Let $\{x_i\}_{i\in \I_\theta}$ be a basis of $\Bq_1$ contained in a homogeneous basis $\BB$ of $\Bq$. Then each $\sigma\in Z^2(\Bq)^H$ is completely determined by its values on the {\it first line}, namely, by the set $\{\sigma(x_i,b)\colon b\in \BB,\, i\in\I_\theta\}$. See \cite[Lemma 3.2]{GIS1}.
\end{remark}
\medbreak
Assume that $H$ is semisimple, $\Bq$ is finitely presented, and set $A=\Bq\#H$. The strategy in \cite{AAGMV} constructs a collection of Hopf algebras $(A_{\bs\lambda})_{\bs\lambda\in\bs\Lambda}$ from the definition of a family $(\mE_{\bs\lambda})_{\bs\lambda\in\bs\Lambda}\subseteq\Cleft_H(\Bq)$. The Hopf algebras $A_{\bs\lambda}$ are isomorphic to the left Schauenburg algebras $L(E_{\bs\lambda},A)$, where $E_{\bs\lambda}$ is the $A$-cleft object given by the bosonization $E_{\bs\lambda}=\mE_{\bs\lambda}\# H$. This implies that $A_{\bs\lambda}$ is a cocycle deformation of $A$; the explicit form of the cocycles is not provided. Moreover, since $E_{\bs\lambda}=\mE_{\bs\lambda}\#H$ and the corresponding section $\gamma\colon A\to E_{\bs\lambda}$ factorizes as $\gamma=\gamma_{\mE_{\bs\lambda}}\#\id_H$, then the associated cocycle $\sigma\in Z^2(A)$, see \eqref{eqn:recuperation-formula}, belongs to the subset $Z_0^2(A)$ of 2-cocycles $\sigma$ satisfying $\sigma_{H\ot H}=\eps$, and thus it factorizes in turn as
\begin{align}\label{eqn:factor}
	\sigma=\sigma_{\bs\lambda}\# \eps, \qquad \sigma_{\bs\lambda}\in Z^2(\Bq)^H.
\end{align} 
The equivalence is given by $\sigma_{\bs\lambda}=\sigma_{|\Bq\ot \Bq}$ and  $\sigma(x\#h,y\#k)=\sigma_{\bs\lambda}(x,h\cdot y)\eps(k)$; see \cite{AG} for an extensive treatment. Notice that in this configuration, $\sigma_{\bs\lambda}$ is the cocycle associated to the section $\gamma_{\bs\lambda}$ in $H\mod$.


On the other hand, the factorization \eqref{eqn:factor} holds in the context of Hochschild 2-cocycles. In particular, this is a consequence of a combination of \c{S}tefan's results in \cite{St}, most notably that $\Z^2(A,\k)\simeq \Z^2(\Bq,\k)^H$.

In this context, $\sigma=\sigma_{\Bq}\#\eps\in Z_0^2(A)$ is pure if and only if $\sigma_{\Bq}\in Z^2(\Bq)^H$ is pure. That is, $\sigma\sim e^\eta$ for some $\eta\in\Z^2(A,\k)$ if and only if there exists $\eta_{\Bq}\in\Z^2(\Bq,\k)^H$ such that $\sigma_{\Bq}\sim e^{\eta_{\Bq}}$; moreover, $\eta=\eta_{\Bq}\#\eps$. In particular, to study the purity of cocycles, we can restrict our attention to the subset $\Z_0^2(\Bq,\k)^H$ of Hochschild 2-cocycles $\eta\in\Z^2(\Bq,\k)^H$ such that $\eta(1,1)=0$. We refer to \cite{GIS1} for further details.

As a result of the above discussion, we can fully restrict our computations to the braided setting.


\subsubsection{Braided vector spaces of diagonal type}
A {\it YD-pair} for a Hopf algebra $H$ is a pair $(g,\chi)\in G(H)\times \Alg(H,\k)$ such that
$\chi(h)g=\chi(h_{(2)})h_{(1)}g\Ss(h_{(3)})$, for all $h\in H$; this implies that $g\in Z(G(H))$.

A finite-dimensional braided vector space $(V,c)$ is said to be of {\it diagonal type} if there exists a basis $\{x_1,\dots,x_\theta\}$ of $V$ such that $c$ is represented by a matrix $\bq=(q_{ij})$, namely, 
$c(x_i\ot x_j)=q_{ij}x_j\ot x_i$, $i,j\in\I_\theta$; in this case, we denote $c = c^{\qb}$. 
A {\it principal YD-realization} of $(V,c^\qb)$ over $H$ is a family of YD-pairs $\{(g_i,\chi_i): i\in\I_\theta\}$ for $H$ 
such that $V\in\ydh$ with action and coaction 
\begin{align*}
	h\cdot x_i=\chi_i(h)x_i,\qquad x_i\mapsto g_i\ot x_i,\qquad h\in H,\ i\in \I_\theta,
\end{align*}
and such that $c^\qb$ coincides with the braiding in $\ydh$. Thus $\chi_j(g_i)=q_{ij}$, for all $i,j\in\I_\theta$. In this situation, the Nichols algebra of $(V,c^\qb)\in\ydh$ is denoted by $\Bq(V)=\Bq_\qb$.
See also \cite{AA} for a complete description of finite-dimensional Nichols algebras of diagonal type.

\medbreak

Let $q\in\G'_N$ with $N\geq 2$ and $\theta\in \N$. A braided vector space of diagonal type $(V,c^\qb)$ is said to be of {\it Cartan type $A_\theta$} if $\dim V=\theta$ and the braiding matrix $\qb=(q_{ij})$ satifies (for some basis)
\begin{align*}
	q_{ii}=q, \qquad q_{ij}q_{ji}=\begin{cases}
		q^{-1},\, &\text{if }\, |i-j|=1,\\
		1,\,  &\text{if }\, |i-j|\geq 2,
	\end{cases}\qquad \text{for all } i,j\in \I_\theta.
\end{align*}
As the name suggests, this braiding belongs to the family of {Cartan type} braidings; see \cite{AS00} for the general definition.

\subsection{Lifting of type $A_2$}\label{sec:liftA2}
Let $q$ be a primitive $N$th root of unity in $\k$, $N\geq 3$. We consider a braiding $\qb=\begin{psmallmatrix}q&q_{12}\\q_{21}&q
\end{psmallmatrix}$ of Cartan type $A_2$, that is, $q_{12}q_{21}=q^{-1}$. 

Let $\{x_1,x_2\}$ be a $\k$-basis associated to this braiding, and let $\Bq_\qb$ be the Nichols algebra associated to $(V,c^\qb)$, with $V\coloneqq\k\{x_1,x_2\}$.
From \cite[Proposition 1.1]{AAG}, $\Bq_\qb$ is the algebra generated by $x_1,x_2$ with relations:
\begin{align}\label{eqn:relations-nichols}
	x_1^N=0,&  &x_2^N=0, & & x_{12}^N=0,& &x_{112}=0,  && x_{122}=0,
\end{align}
for $x_{12}\coloneqq x_1x_2 - q_{12}x_2x_1$, $x_{112}\coloneqq x_1x_{12}-qq_{12}x_{12}x_1$, and $x_{122}\coloneqq x_{12}x_2-qq_{12}x_2x_{12}$.
\begin{remark}
	In \cite{AAG}, the relation $x_{221}=0$ is used instead of $x_{122}=0$, where $x_{221}=x_2x_{21}-qq_{21}x_{21}x_2$ for $x_{21}\coloneqq x_2x_1-q_{21}x_1x_2$. However, a direct computation shows that $x_{221}=0$ if and only if $x_{122}=0$. We prefer the presentation \eqref{eqn:relations-nichols} because it avoids introducing additional notation.
\end{remark}

The relations $x_{112}=0=x_{122}$ are called {\it quantum Serre relations}, since they can be expressed by a braided (or $q$-deformed) adjoint bracket, and hence they are precisely the braided analogues of the classical Serre relations from Lie theory. For more details, see \cite{AAG}.


A linear PBW basis for this algebra is given by
\begin{align}\label{eqn:basis}
	\mathbb{B}_\bq=\{x_2^{n_2}x_{12}^{n_{12}}x_1^{n_1}\colon n_2, n_{12}, n_1\in\I_{N-1}^\circ\}.
\end{align}
The {\it distinguished} pre-Nichols algebra $\tilde{\Bq}_{\qb}$ is the quotient $\k\lg x_1,x_2|x_{112}=x_{122}=0\rg$. It admits an (infinite) PBW basis
$\tilde{\BB}_\bq=\{x_2^{n_2}x_{12}^{n_{12}}x_1^{n_1}: n_2, n_{12}, n_1\geq 0\}$.

We fix a semisimple Hopf algebra $H$ with a YD-realization $(g_i,\chi_i)_{i\in\I_2}$ of $\bq$.

	
	The cleft objects for $\Bq_{\bq}$ are parametrized by the family of algebras $\mE_{\bs\lambda}$, with $\bs\lambda=(\lambda_{1},\lambda_{2},\lambda_{12},\lambda_{112},\lambda_{122})\in\k^5$, generated by $y_1, y_2$ and the relations:
	\begin{align}\label{eqn:rels_cleft}
		y_1^N&=\lambda_1, & y_2^N&=\lambda_2,  & y_{12}^N&=\lambda_{12},
		&y_{112}&=\lambda_{112},&y_{122}&=\lambda_{122},
	\end{align}
	where $y_{12}\coloneqq y_1y_2 - q_{12}y_2y_1$, $y_{112}\coloneqq y_1y_{12}-qq_{12}y_{12}y_1$, and $y_{122}\coloneqq y_{12}y_2-qq_{12}y_2y_{12}$, and the parameters $\bs\lambda\in\k^5$ must satisfy:
	\begin{align}\label{eqn:restictions-lambda}
		\begin{split}	\lambda_i\neq 0 &\text{ only if } \chi_i^N=\eps, \ i\in\I_2;  \qquad \lambda_{12}\neq 0 \text{ only if } (\chi_{1}\chi_{2})^N=\eps;\\
			\lambda_{112}\neq 0 &\text{ only if } \chi_{1}^2\chi_{2}=\eps; \quad\qquad\quad \lambda_{122}\neq 0 \text{ only if } \chi_{1}\chi_{2}^2=\eps.
	\end{split}\end{align}
	
	On the other hand, the deformations of $\Bq_\qb$ over $H$ are obtained as left Schauenburg Hopf algebras
	$\mathfrak{u}_\qb(\bs\lambda)\coloneqq L(\mE_{\bs\lambda}\#H,\Bq_\qb\#H)$, for $\mE_{\bs\lambda}\in\Cleft_H(\Bq_\qb)$. Explicitly, $\mathfrak{u}_\qb(\bs\lambda)$ is the algebra generated by $a_1,a_2$ with relations
	\begin{align}\label{rel:A2-N-3-generic}
		\begin{split}
			a_i^N&=\lambda_i(1-g_i^N),\, i\in \I_2,\quad a_{112}=\lambda_{112}(1-g_{1}^2g_{2}), \quad a_{122}=\lambda_{122}(1-g_{1}g_{2}^2),\\
			a_{12}^N&=\lambda_{12}(1-g_{1}^Ng_{2}^N)-(1-q^{-1})^Nq_{21}^{\frac{N(N-1)}{2}}\lambda_1\lambda_2(1-g_1^N)g_2^N,
		\end{split}
	\end{align}
	for $a_{12}\coloneqq a_1a_2 - q_{12}a_2a_1$, $a_{112}\coloneqq a_1a_{12}-qq_{12}a_{12}a_1$ y $a_{122}\coloneqq a_{12}a_2-qq_{12}a_2a_{12}$, where the parameters $\bs\lambda\in\k^5$ satisfy \eqref{eqn:restictions-lambda} and the normalization conditions 
	\begin{align}\label{restricciones-normalizacion-raices}
		\begin{split}
			\lambda_i&=0, \text{ si }\,  g_i^N=1, \, i\in \I_2;\qquad\lambda_{12}=0, \text{ si }\,  g_1^Ng_{2}^N=1;\\
			\lambda_{112}&=0, \text{ si }\,  g_{1}^2g_{2}=1;\qquad \qquad\,\,
			\lambda_{122}=0, \text{ si }\,  g_{1}g_{2}^2=1.
		\end{split}
	\end{align}
	In particular, any lifting arises as a cocycle deformation of  $\Bq_\qb\#H$, see \cite[Theorem 1.8, Theorem 1.10]{AAG}.

	The following lemma allows us to separate the analysis and computation of the cocycles associated with the liftings.
\begin{lemma}\cite[Lemma 5.1]{AAG}\label{lem:AAG} Let $\qb$, $\Bq_{\qb}$ and $\mE_{\bs\lambda}$ be as above.
\begin{itemize}
\item If $N>3$, then $\lambda_{112}=\lambda_{122}=0$.
\item If $N=3$ and $q_{12}\neq q_{21}$, then $\lambda_{112}=\lambda_{122}=0$.
\end{itemize}
Hence,  $\lambda_{112}$ or $\lambda_{122}$ may be nonzero if and only if $\qb=\begin{psmallmatrix}
q&q\\q&q
\end{psmallmatrix}$, $q\in\mathbb{G}_3'$. Thus:
\begin{itemize}
	\item If $\qb\neq \begin{psmallmatrix}
	q&q\\q&q
\end{psmallmatrix}$, then $\mE_{\bs\lambda}$ is an algebra quotient of $\tilde{\Bq}_{\qb}$.\qed
\end{itemize}
\end{lemma}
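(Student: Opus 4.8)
The plan is to reduce everything to the arithmetic encoded in the restrictions \eqref{eqn:restictions-lambda}, which are taken as given here since they describe the parametrization of the cleft objects. They tell us that $\lambda_{112}\neq 0$ forces $\chi_1^2\chi_2=\eps$ and $\lambda_{122}\neq 0$ forces $\chi_1\chi_2^2=\eps$ (these are the Yetter--Drinfeld characters of the braided commutators $x_{112}$ and $x_{122}$). Hence it suffices to show that under the stated hypotheses neither character identity can hold; the contrapositive then gives $\lambda_{112}=\lambda_{122}=0$. To disprove a character identity it is enough to evaluate it on the grouplikes $g_1,g_2$ of the realization, using $\chi_j(g_i)=q_{ij}$.

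First I would analyze $\chi_1^2\chi_2=\eps$. Evaluating at $g_1$ and $g_2$ gives the two scalar equations $q_{11}^2q_{21}=1$ and $q_{12}^2q_{22}=1$. Substituting the type $A_2$ constraints $q_{11}=q_{22}=q$ and $q_{12}q_{21}=q^{-1}$ yields $q_{21}=q^{-2}$ and then $q_{12}=q^{-1}q_{21}^{-1}=q$, so that $q_{12}^2q_{22}=q^3=1$. Since $q\in\mathbb{G}_N'$, this forces $N=3$, and back-substituting ($q^{-2}=q$) gives $q_{12}=q_{21}=q$, that is $\qb=\begin{psmallmatrix} q&q\\q&q\end{psmallmatrix}$. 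A symmetric computation starting from $\chi_1\chi_2^2=\eps$, now evaluating at $g_1,g_2$ to obtain $q_{11}q_{21}^2=1$ and $q_{12}q_{22}^2=1$, leads to the identical conclusion.

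The two bullets then follow by contraposition: if $N>3$, or if $N=3$ but $q_{12}\neq q_{21}$ (so that $\qb\neq\begin{psmallmatrix} q&q\\q&q\end{psmallmatrix}$), then neither $\chi_1^2\chi_2=\eps$ nor $\chi_1\chi_2^2=\eps$ can hold, and \eqref{eqn:restictions-lambda} forces $\lambda_{112}=\lambda_{122}=0$. Collecting these, a nonzero $\lambda_{112}$ or $\lambda_{122}$ is possible only when $N=3$ and $\qb=\begin{psmallmatrix} q&q\\q&q\end{psmallmatrix}$ with $q\in\mathbb{G}_3'$ (this matrix is compatible with $q_{12}q_{21}=q^{-1}$ precisely because $q^2=q^{-1}$ there), which is the ``Hence'' statement; the reverse possibility is a compatibility check on the realization rather than an obstruction. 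Finally, when $\qb\neq\begin{psmallmatrix} q&q\\q&q\end{psmallmatrix}$ we have $\lambda_{112}=\lambda_{122}=0$, so the relations \eqref{eqn:rels_cleft} defining $\mE_{\bs\lambda}$ include $y_{112}=0$ and $y_{122}=0$; identifying $y_i$ with $x_i$ exhibits $\mE_{\bs\lambda}$ as a quotient of $\tilde{\Bq}_{\qb}=\k\lg x_1,x_2\mid x_{112}=x_{122}=0\rg$ by the extra relations $y_i^N=\lambda_i$, $y_{12}^N=\lambda_{12}$.

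I do not expect a genuine obstacle here: once \eqref{eqn:restictions-lambda} is in hand the argument is elementary root-of-unity bookkeeping. The only point requiring care — and the step that links the abstract restrictions to the concrete Cartan data $\qb$ — is correctly identifying $\chi_1^2\chi_2$ and $\chi_1\chi_2^2$ as the characters carried by the iterated braided commutators $x_{112}$ and $x_{122}$; everything else is forced by the constraints $q_{11}=q_{22}=q$, $q_{12}q_{21}=q^{-1}$ and the primitivity of $q$.
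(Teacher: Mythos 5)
Your proposal is correct, and it is essentially the expected argument: the paper itself gives no proof of this lemma (it is quoted from \cite[Lemma 5.1]{AAG} with the proof omitted), and the argument there is precisely this kind of root-of-unity bookkeeping applied to the character conditions in \eqref{eqn:restictions-lambda}. One small slip worth flagging: the paper's convention is $\chi_j(g_i)=q_{ij}$, so evaluating $\chi_1^2\chi_2=\eps$ at $g_1$ and $g_2$ yields $q_{11}^2q_{12}=1$ and $q_{21}^2q_{22}=1$, not the transposed equations you wrote; since the constraint set $q_{11}=q_{22}=q$, $q_{12}q_{21}=q^{-1}$ and the target matrix $\begin{psmallmatrix} q&q\\q&q\end{psmallmatrix}$ are symmetric under $q_{12}\leftrightarrow q_{21}$, this does not affect any conclusion, but with the correct convention the computation reads: $q_{12}=q^{-2}$, hence $q_{21}=q$, hence $q^3=1$, hence $N=3$ and $q_{12}=q$.
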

From the viewpoint of the cleft objects $\mE_{\bs\lambda}$, or even of the parameters $\bs\lambda \in \k^5$, this result shows that we have two distinct cases for the computation of Hopf 2-cocycles: the {\it generic liftings}, where the quantum-Serre relations $a_{112}=a_{122}=0$ hold, and the {\it atypical liftings}, where such relations may be deformed (hence $\qb= \begin{psmallmatrix}
	q&q\\q&q
\end{psmallmatrix}$, $q\in\mathbb{G}_3'$), {\it cf.}~\eqref{rel:A2-N-3-generic}. We deal with these instances in Sections \ref{sec:general} and \ref{sec:N3}, respectively.

\section{A good setting}\label{sec:Good-Setting}
Consider a braided vector space $(V,c)$	with basis $\{x_1,\dots,x_\theta\}$. Suppose that $V$ admits a realization in $\ydh$, for some Hopf algebra $H$; and denote $\Bq=\Bq(V)$. 
Let $\mE\in\Cleft_H(\Bq)$, with coaction $\rho\colon\mE\to\mE\ot\Bq$.
We fix:
\begin{itemize}[leftmargin=*]
	\item A pre-Nichols algebra $\hat{\Bq}$ with an algebra projection $\pi_{\mE}\colon \hat{\Bq}\to \mE$ satisfying
\begin{align}\label{eqn:coaction-of-E}
	\rho\pi_\mathcal{E}=(\pi_\mathcal{E}\ot\pi_\Bq)\Delta_{\hat{\Bq}}.
	\end{align} 
Here, $\pi_{\Bq}\colon\hat{\Bq}\to\Bq$ denotes the canonical projection, and we set $\mathfrak{J}\coloneqq\ker\pi_{\Bq}$. 
\item A homogeneous basis $\BB$ of $\Bq$: that is, a collection $\{p_b(x_1,\dots,x_\theta):b\in\BB\}$, where each $p_b$ is a homogeneous (non-commutative) polynomial in  $\k\langle t_1,\dots,t_\theta\rangle$. 
\end{itemize}
\begin{example}\label{exa:basis}
Consider the Nichols algebra $\Bq_{\bq}$ of Cartan type $A_2$ as in \S\ref{sec:liftA2}, then the basis \eqref{eqn:basis} is determined by the non-commutative polynomials in two variables. Indeed, for each $b=x_2^{n_2}x_{12}^{n_{12}}x_1^{n_1}\in \BB_\qb$, we define $p_b\in \k\langle t_1,t_2\rangle$ as $p_b\coloneqq t_2^{n_2}(t_1t_2-q_{12}t_2t_1)^{n_{12}}t_1^{n_1}$. Thus, the specialization $t_i\mapsto x_i$, $i\in \I_2$, establishes that $b=p_b(x_1,x_2)$. 
\end{example}

\begin{remark}
Recall that such $\hat{\Bq}$ as above always exists, as we can take $\hat{\Bq}=T(V)$.
\end{remark}

Since $\mE$ is generated by $V$ as an algebra, we rename the generators as $y_1,\dots,y_\theta$.
Let $\EE=\{p_b(y_1,\dots,y_\theta):b\in\BB\}$ denote the corresponding basis of $\mE$ and let $\ell\colon\Bq\to\mE$ be the isomorphism such that $\ell(\BB)=\EE$. 
We consider the linear map
\[
\eps_0\colon\mE\to \k \quad \text{ so that } \, \eps_0\ell=\eps_{\Bq}.
\]
Under these assumptions, we obtain a simplification of \eqref{eqn:recuperation-formula}.
\begin{proposition}\label{pro:intro}
Let $\gamma\colon\Bq\to\mE$ be an $H$-linear section. If $\eps_0\gamma=\eps_\Bq$, then 
	\[
	\sigma(a,b)=\eps_0(\gamma(a)\gamma(b)),  \ a,b\in\Bq.
	\]
	is an $H$-invariant Hopf cocycle such that $\mE\simeq \Bq_{(\sigma)}$.
\end{proposition}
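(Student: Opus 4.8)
The plan is to identify $\sigma$ with the cocycle produced by the general reconstruction formula \eqref{eqn:recuperation-formula}, for which the asserted properties are already available, and then to simplify that formula using the hypothesis $\eps_0\gamma=\eps_\Bq$. First I would attach to the section $\gamma$ the (braided, $H$-linear) cocycle
\[
\sigma'(a,b)=\gamma(a_{(1)})\gamma(b_{(1)})\gamma^{-1}(a_{(2)}b_{(2)}),\qquad a,b\in\Bq,
\]
where $\gamma^{-1}$ is the convolution inverse. By the reconstruction theory recalled in \S\ref{sec:cocycles}, now applied in the braided setting of $\Cleft_H(\Bq)$, one has $\sigma'\in Z^2(\Bq)^H$ and $\gamma$ induces an isomorphism of comodule algebras $\gamma\colon\Bq_{(\sigma')}\to\mE$; transporting the twisted product through $\gamma$ this says
\[
\gamma(a)\gamma(b)=\sigma'(a_{(1)},b_{(1)})\,\gamma(a_{(2)}b_{(2)}),\qquad a,b\in\Bq.
\]
This last identity is exactly \eqref{eqn:recuperation-formula} multiplied on the right by $\gamma$ and collapsed via the convolution identity $\gamma^{-1}\ast\gamma=\eps_\Bq\,1_\mE$; the only delicate point is that the coproduct of the product $a_{(2)}b_{(2)}$ involves the braiding of $\ydh$, which is precisely what the cited reconstruction accounts for.

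Next I would apply the functional $\eps_0$ to this identity. Since $\sigma'(a_{(1)},b_{(1)})$ is a scalar it factors out, giving
\[
\eps_0(\gamma(a)\gamma(b))=\sigma'(a_{(1)},b_{(1)})\,\eps_0\big(\gamma(a_{(2)}b_{(2)})\big).
\]
Here the hypothesis $\eps_0\gamma=\eps_\Bq$ does all the work: it yields $\eps_0(\gamma(a_{(2)}b_{(2)}))=\eps_\Bq(a_{(2)}b_{(2)})=\eps_\Bq(a_{(2)})\eps_\Bq(b_{(2)})$. Substituting this back and using the counit axioms $a_{(1)}\eps_\Bq(a_{(2)})=a$, $b_{(1)}\eps_\Bq(b_{(2)})=b$ together with the bilinearity of $\sigma'$, the whole expression collapses to $\eps_0(\gamma(a)\gamma(b))=\sigma'(a,b)$. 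Thus the map $\sigma$ of the statement coincides with $\sigma'$, and therefore inherits from $\sigma'$ every asserted property: it is a Hopf $2$-cocycle and $\mE\simeq\Bq_{(\sigma)}$.

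Finally, if one prefers not to quote $H$-invariance from the reconstruction, it can be read off directly from the closed formula $\sigma=\eps_0\circ m_\mE\circ(\gamma\ot\gamma)$: the section $\gamma$ is $H$-linear by assumption, the multiplication of $\mE$ is $H$-linear because $\mE$ is an $H$-module algebra, and $\eps_0=\eps_\Bq\ell^{-1}$ is $H$-linear because $\ell$ preserves $H$-weights (the generators $y_i$ carry the same characters $\chi_i$ as the $x_i$) while $\eps_\Bq$ is $H$-linear; composing these shows $\sigma$ is a morphism $\Bq\ot\Bq\to\k$ of $H$-modules, i.e. $\sigma\in Z^2(\Bq)^H$. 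The main obstacle is the braided bookkeeping underlying the displayed identity $\gamma(a)\gamma(b)=\sigma'(a_{(1)},b_{(1)})\gamma(a_{(2)}b_{(2)})$, where the braiding intervenes in the coproduct of $a_{(2)}b_{(2)}$; once this is granted, as it is by the theory in \S\ref{sec:cocycles}, the remaining steps are only the counit simplification of the second paragraph.
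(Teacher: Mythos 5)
Your proof is correct and follows essentially the same route as the paper: both invoke the braided analogue of \eqref{eqn:recuperation-formula} (via \cite[Theorem 1.3]{AF}) to get $\gamma(a)\gamma(b)=\sigma'(a_{(1)},b_{(1)})\gamma(a_{(2)}b_{(2)})$, then apply $\eps_0$ and use the hypothesis $\eps_0\gamma=\eps_\Bq$ to collapse this to $\sigma'(a,b)=\eps_0(\gamma(a)\gamma(b))$. The only (cosmetic) difference is that you collapse via multiplicativity of $\eps_\Bq$ and the counit axiom, whereas the paper expands through the reduced coproduct $\underline{\Delta}$; your added direct check of $H$-invariance is a harmless bonus.
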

	\begin{proof}
By \cite[Theorem 1.3]{AF}, we have the braided analogous of \eqref{eqn:recuperation-formula}, namely  $\sigma(a,b)=\gamma(a_{(1)})\gamma(b_{(1)})\gamma^{-1}(a_{(2)}b_{(2)})$, 
$a,b\in\Bq$. Then, the formula is equivalent to 
\begin{align}\label{pro:intro-proof}
\sigma(a_{(1)},b_{(1)})\gamma(a_{(2)}b_{(2)})=\gamma(a)\gamma(b)
\end{align}
Now, using the braided coproduct in $\Bq\ot \Bq$, the equality \eqref{pro:intro-proof} is
\begin{multline*}
		\sigma(a,b)+\sigma(a,b_{\underline{(1)}})\gamma(b_{\underline{(2)}})+\sigma(a_{\underline{(1)}},a_{\underline{(2)}(-1)}\cdot b)\gamma(a_{\underline{(2)}(0)})\\+\sigma(a_{\underline{(1)}},a_{\underline{(2)}(-1)}\cdot b_{\underline{(1)}})\gamma(a_{\underline{(2)}(0)}b_{\underline{(2)}})+\gamma(ab)=\gamma(a)\gamma(b).
\end{multline*}
	As $\eps_0\gamma=\eps_\Bq$, we apply $\eps_0$ on both sides in this equality and obtain the result.
\end{proof}

We now write $\tx_1,\dots,\tx_\theta$ for the generators of $\hat{\Bq}$ and consider the inclusion $\iota\colon \Bq\to\hat{\Bq}$, determined by the liftings 
\begin{align}\label{eqn:iota}
b=p_b(x_1,\dots,x_\theta)\mapsto \tb\coloneqq p_b(\tx_1,\dots,\tx_\theta), \qquad b\in\BB.
\end{align}
 This establishes a decomposition of vector spaces $\hat{\Bq}=\iota(\Bq)\oplus \mathfrak{J}$; we let $\tau_1\colon \hat{\Bq}\to \iota(\Bq)$ and $\tau_2\colon \hat{\Bq}\to \mathfrak{J}$ be the corresponding linear projections. 
 
 Note that $\ell(\pi_{\Bq}(\tb))=\ell(b)=\pi_{\mE}(\tb)$, $b\in\BB$. We have the following result.

\begin{lemma}\label{lem:H1}
Assume that
\begin{align}\label{eqn:H1}\tag{{\bf H1}}
	((\eps_0\ot\ell)\rho\ot \id)(\pi_\mathcal{E}\tau_2\ot \pi_\Bq)\underline{\Delta}(\tb)&=(\pi_\mathcal{E}\tau_2\ot \pi_\Bq)\underline{\Delta}(\tb), &   \text{all }b&\in\mathbb{B}.
\end{align}
Then the following identity holds, for all $b\in\BB$:
	\begin{multline}\label{eqn:construction-coaction-equallity}
	(\pi_\mathcal{E}\tau_2\ot \pi_\Bq)\underline{\Delta}(\tb)+[((\eps_0\pi_\mathcal{E}\tau_2\ot \pi_\mathcal{E}\tau_1)\underline{\Delta}\tau_1)\ot \pi_\Bq]\underline{\Delta}(\tb)\\=(\eps_0\pi_\mathcal{E}\tau_2\ot\pi_\Bq )\underline{\Delta}(\tb)+[\eps_0\pi_\mathcal{E}\tau_2\ot ((\pi_\mathcal{E}\tau_1\ot \pi_\Bq)\underline{\Delta}\tau_1)]\underline{\Delta}(\tb).    
\end{multline}
\end{lemma}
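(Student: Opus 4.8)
## Proof strategy for Lemma \ref{lem:H1}

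The plan is to massage the hypothesis \eqref{eqn:H1} into the target identity \eqref{eqn:construction-coaction-equallity} by unfolding the map $(\eps_0\ot\ell)\rho$ that appears on the left-hand side of \eqref{eqn:H1}, and using the compatibility \eqref{eqn:coaction-of-E} between the coaction $\rho$ and the comultiplication of $\hat\Bq$. The key observation is that the composite $(\eps_0\ot\ell)\rho\,\pi_\mE$ acting on a homogeneous element can be rewritten purely in terms of the projections $\tau_1,\tau_2,\pi_\mE,\pi_\Bq$ and the reduced comultiplication $\underline\Delta$ of $\hat\Bq$. So the first thing I would do is apply \eqref{eqn:coaction-of-E}, namely $\rho\pi_\mE=(\pi_\mE\ot\pi_\Bq)\Delta_{\hat\Bq}$, to the factor $\pi_\mE\tau_2(\tb)$ inside the left-hand side of \eqref{eqn:H1}. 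Since $\tau_2(\tb)\in\mathfrak J$ is homogeneous of positive degree, I can replace $\Delta_{\hat\Bq}$ by its reduced part plus the primitive terms $\tau_2(\tb)\ot 1+1\ot\tau_2(\tb)$, keeping track of how $\eps_0\ot\ell$ collapses each summand.

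The second step is to expand $\rho\pi_\mE\tau_2(\tb)=(\pi_\mE\ot\pi_\Bq)\Delta_{\hat\Bq}\tau_2(\tb)$ into the three pieces coming from $\Delta_{\hat\Bq}=\underline\Delta+(\id\ot 1)+(1\ot\id)$, and then apply $\eps_0\ot\ell$. The term $1\ot\tau_2(\tb)$ contributes $\eps_0(1)\,\ell\pi_\Bq(\ldots)$, which produces the genuinely nontrivial summand; the term $\tau_2(\tb)\ot 1$ contributes $\eps_0\pi_\mE\tau_2(\tb)$ times a unit; and the reduced middle term $\underline\Delta\tau_2(\tb)$ splits according to whether each tensor leg lands in $\iota(\Bq)$ or in $\mathfrak J$, which is exactly where $\tau_1$ and $\tau_2$ re-enter. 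After applying the outer $(\,\cdot\,\ot\id)$ against $\pi_\Bq\underline\Delta(\tb)$ and using $\ell\pi_\Bq=\pi_\mE$ on $\iota(\Bq)$ (recorded just before the lemma as $\ell\pi_\Bq(\tb)=\pi_\mE(\tb)$), the four composite maps appearing in \eqref{eqn:construction-coaction-equallity} should emerge termwise: the bare $(\pi_\mE\tau_2\ot\pi_\Bq)\underline\Delta(\tb)$ and the $\tau_1$-twisted term on the left, against the two $\eps_0\pi_\mE\tau_2$-terms on the right.

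Throughout I would use coassociativity of $\underline\Delta$ on $\hat\Bq$ to reconcile the nested $\underline\Delta\tau_1$ appearing inside the composites with the single $\underline\Delta(\tb)$ on the outside; this is what lets the doubly-reduced coproducts on both sides match up leg-by-leg. I expect the main obstacle to be purely bookkeeping: tracking precisely which of the two tensor legs of each $\underline\Delta$ is hit by $\tau_1$ versus $\tau_2$, and confirming that the cross terms (one leg in $\iota(\Bq)$, the other in $\mathfrak J$) distribute exactly as written rather than producing spurious extra summands. The grading ensures all the positive-degree reduced coproducts are well-defined and that $\eps_0$ annihilates the right things, so once the substitution from \eqref{eqn:H1} is made, the identity \eqref{eqn:construction-coaction-equallity} should follow by collecting terms, with no further structural input needed.
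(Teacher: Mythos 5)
Your route is viable, and it is organized differently from the paper's proof. The paper anchors everything on the comodule coassociativity $(\rho\ot\id)\rho\,\pi_\mathcal{E}(\tb)=(\id\ot\Delta)\rho\,\pi_\mathcal{E}(\tb)$, applies $\eps_0\ot\ell\ot\id$ to both sides, expands via \eqref{eqn:coaction-of-E}, and invokes \eqref{eqn:H1} only at the last step to convert one summand; you instead take \eqref{eqn:H1} as the starting point, expand its left-hand side via \eqref{eqn:coaction-of-E}, and close with coassociativity of $\underline{\Delta}$ in $\hat{\Bq}$. Both rest on the same structural facts, and your version is, if anything, more direct: the expansion gives $(\eps_0\ot\ell)\rho\,\pi_\mathcal{E}\tau_2=\eps_0\pi_\mathcal{E}\tau_2(\cdot)\,1_{\mE}+(\eps_0\pi_\mathcal{E}\ot\ell\pi_{\Bq})\underline{\Delta}\tau_2$, so that \eqref{eqn:H1} becomes $(\pi_\mathcal{E}\tau_2\ot\pi_\Bq)\underline{\Delta}(\tb)-(\eps_0\pi_\mathcal{E}\tau_2\ot\pi_\Bq)\underline{\Delta}(\tb)=[((\eps_0\pi_\mathcal{E}\tau_2\ot\pi_\mathcal{E}\tau_1)\underline{\Delta}\tau_2)\ot\pi_\Bq]\underline{\Delta}(\tb)$, using $\ell\pi_\Bq=\pi_\mathcal{E}\tau_1$ and $\eps_0\pi_\mathcal{E}=\eps_0\pi_\mathcal{E}\tau_2$ on positive-degree homogeneous elements. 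Adding $[((\eps_0\pi_\mathcal{E}\tau_2\ot\pi_\mathcal{E}\tau_1)\underline{\Delta}\tau_1)\ot\pi_\Bq]\underline{\Delta}(\tb)$ to both sides, the left side then carries $\underline{\Delta}(\tau_1+\tau_2)=\underline{\Delta}$ in its inner slot, coassociativity $(\underline{\Delta}\ot\id)\underline{\Delta}=(\id\ot\underline{\Delta})\underline{\Delta}$ moves it to the second tensor leg, and splitting that leg again by $\id=\tau_1+\tau_2$ produces the right-hand side of \eqref{eqn:construction-coaction-equallity} plus a single cross term.

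Two points in your sketch must, however, be repaired. First, your term accounting is backwards: the summand $1\ot\tau_2(\cdot)$ does \emph{not} produce the genuinely nontrivial contribution; it is annihilated outright, since $\tau_2$ lands in $\mathfrak{J}=\ker\pi_\Bq$ and hence $\ell\pi_\Bq\tau_2=0$. The nontrivial summand comes from the reduced part $\underline{\Delta}\tau_2$, while $\tau_2(\cdot)\ot1$ yields the scalar term $\eps_0\pi_\mathcal{E}\tau_2$. If you carried out the collection of terms with your labelling, the identity would not close until this is corrected. Second, the cross term mentioned above, $[\eps_0\pi_\mathcal{E}\tau_2\ot((\pi_\mathcal{E}\tau_1\ot\pi_\Bq)\underline{\Delta}\tau_2)]\underline{\Delta}(\tb)$, does vanish, but not by mere bookkeeping: you need that $\mathfrak{J}$ is a coideal. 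Since $\pi_\Bq$ is a coalgebra map and $\pi_\Bq\tau_2=0$, one gets $(\pi_\Bq\ot\pi_\Bq)\underline{\Delta}\tau_2=0$, which together with $\ell\pi_\Bq=\pi_\mathcal{E}\tau_1$ kills the term; this is precisely the ``further structural input'' that your last sentence claims is unnecessary. With these two corrections your argument is complete.
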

Here, we consider $\eps_0\ot\ell$ as a map $\mE\ot\Bq\to \mE$, via the identification $\k\ot\mE\simeq \mE$.
\pf
The identity \eqref{eqn:construction-coaction-equallity} follows by first applying $\eps_0\ot \ell\ot\id$ on both sides of the identity $(\rho\ot \id)\rho(\pi_\mathcal{E}(\tb))=(\id\ot \Delta)\rho(\pi_\mathcal{E}(\tb))$, and using the hypothesis \eqref{eqn:H1}.

On the one hand,
	\begin{align}\begin{split}
\label{eqn:rho1}
	\rho\pi_\mathcal{E}(\tb)\stackrel{\eqref{eqn:coaction-of-E}}{=}&(\pi_\mathcal{E}\ot\pi_\Bq)\Delta(\tb)=\pi_\mathcal{E}(\tb)\ot 1+1\ot\pi_\Bq(\tb)+(\pi_\mathcal{E}\ot \pi_\Bq)\underline{\Delta}(\tb)\\ 
	=&\pi_\mathcal{E}(\tb)\ot 1+1\ot\pi_\Bq(\tb)+(\pi_\mathcal{E}\tau_2\ot \pi_\Bq)\underline{\Delta}(\tb)+(\pi_\mathcal{E}\tau_1\ot \pi_\Bq)\underline{\Delta}(\tb).
		\end{split}
	\end{align}
Next we apply $(\rho\ot\id)$, and obtain that $(\rho\ot \id)\rho\pi_\mathcal{E}(\tb)$ is
\begin{align*}
	\Big(\pi_\mathcal{E}(\tb)\ot 1+&1\ot\pi_\Bq(\tb)+(\pi_\mathcal{E}\tau_2\ot \pi_\Bq)\underline{\Delta}(\tb)+(\pi_\mathcal{E}\tau_1\ot \pi_\Bq)\underline{\Delta}(\tb)\Big)\ot 1\\
	&+1\ot1\ot \pi_\Bq(\tb)+(\rho\pi_\mathcal{E}\tau_2\ot \pi_\Bq)\underline{\Delta}(\tb)+(\rho\pi_\mathcal{E}\tau_1\ot \pi_\Bq)\underline{\Delta}(\tb).
\end{align*}
Let us apply $(\eps_0\ot \ell\ot \id)$: it is clear that
\[
(\eps_0\ot \ell\ot \id)(\pi_\mathcal{E}(\tb)\ot 1\ot 1+1\ot\pi_\Bq(\tb)\ot 1+1\ot1\ot \pi_\Bq(\tb))=\pi_\mathcal{E}(\tb)\ot 1+1\ot \pi_\Bq(\tb)
\]
and $(\eps_0\ot \ell\ot \id)(\pi_\mathcal{E}\tau_1\ot \pi_\Bq)\underline{\Delta}(\tb)\ot 1=0$. Hence, following \eqref{eqn:rho1} we get:
\[
(\eps_0\ot \ell\ot \id)(\rho\pi_\mathcal{E}\tau_1\ot \pi_\Bq)\underline{\Delta}(\tb)=(\pi_\mathcal{E}\tau_1\ot \pi_\Bq)\underline{\Delta}(\tb)
+[((\eps_0\pi_\mathcal{E}\tau_2\ot \pi_\mathcal{E}\tau_1)\underline{\Delta}\tau_1)\ot \pi_\Bq]\underline{\Delta}(\tb).    
\]
Hence, we reach the equality:
\begin{align}\label{eqn:leftside}
\notag	(\eps_0\ot \ell\ot\id)(\rho\ot \id)&\rho \pi_\mathcal{E}(\tb)=\pi_\mathcal{E}(\tb)\ot 1+1\ot \pi_\Bq(\tb)\\
	&+(\eps_0\ot \ell)(\pi_\mathcal{E}\tau_2\ot \pi_{\Bq})\underline{\Delta}(\tb)\ot 1+[((\eps_0\ot \ell)\rho\pi_\mathcal{E}\tau_2)\ot \pi_\Bq]\underline{\Delta}(\tb)\\
\notag	&+(\pi_\mathcal{E}\tau_1\ot \pi_\Bq)\underline{\Delta}(\tb)
	+[((\eps_0\pi_\mathcal{E}\tau_2\ot \ell\pi_{\Bq})\underline{\Delta}\tau_1)\ot \pi_\Bq]\underline{\Delta}(\tb).    
\end{align}

On the other hand, by \eqref{eqn:rho1} and the definition of $\underline{\Delta}$:
\begin{align*}
	(\id\ot \Delta)\rho\pi_\mathcal{E}(\tb)=\pi_\mathcal{E}(\tb)&\ot 1\ot 1+1\ot[\pi_\Bq(\tb)\ot 1+1\ot\pi_\Bq(\tb) +(\pi_\Bq\ot \pi_\Bq)\underline{\Delta}(\tb)]\\
	&+(\pi_\mathcal{E}\tau_2\ot \Delta\pi_\Bq)\underline{\Delta}(\tb)+[(\pi_\mathcal{E}\tau_1)\ot \Delta\pi_\Bq]\underline{\Delta}(\tb).  
\end{align*}
	Applying $(\eps_0\ot \ell\ot \id)$, the first line above becomes
\[
\pi_\mathcal{E}(\tb)\ot1+1\ot \pi_\Bq(\tb)+(\pi_\mathcal{E}\tau_1\ot \pi_\Bq)\underline{\Delta}(\tb)
\]
while $(\eps_0\ot \ell\ot \id)[(\pi_\mathcal{E}\tau_1)\ot \Delta\pi_\Bq]\underline{\Delta}(\tb)=0$. As well:
\begin{align*}
(\eps_0\ot \ell\ot \id)&(\pi_\mathcal{E}\tau_2\ot \Delta\pi_\Bq)\underline{\Delta}(\tb)=(\eps_0\pi_\mathcal{E}\tau_2\ot \ell\pi_{\Bq})\underline{\Delta}(\tb)\ot 1\\
\notag &+(\eps_0\pi_\mathcal{E}\tau_2\ot\pi_\Bq )\underline{\Delta}(\tb)	+
(\eps_0\ot \ell\ot \id)(\pi_\mathcal{E}\tau_2\ot\pi_{\Bq}\ot \pi_\Bq)(\id\ot \underline{\Delta}\tau_1)\underline{\Delta}(\tb),
\end{align*}
using that $\Delta\pi_{\Bq}=\pi_{\Bq}\ot 1+1\ot\pi_{\Bq}+(\pi_{\Bq}\ot\pi_{\Bq})\underline{\Delta}\tau_1$. Hence
\begin{align}\label{eqn:rightside}
\notag (\eps_0\ot &\ell\ot \id)(\id\ot \Delta)\rho\pi_\mathcal{E}(\tb)=\pi_\mathcal{E}(\tb)\ot1+1\ot \pi_\Bq(\tb)+(\pi_\mathcal{E}\tau_1\ot \pi_\Bq)\underline{\Delta}(\tb)\\
 &+(\eps_0\ot\ell)(\pi_\mathcal{E}\tau_2\ot \pi_{\Bq})\underline{\Delta}(\tb)\ot 1+(\eps_0\pi_\mathcal{E}\tau_2\ot\pi_\Bq )\underline{\Delta}(\tb)\\
\notag	&+(\eps_0\ot \ell\ot \id)(\pi_\mathcal{E}\tau_2\ot\pi_{\Bq}\ot \pi_\Bq)(\id\ot \underline{\Delta}\tau_1)\underline{\Delta}(\tb).
\end{align}

Finally, comparing \eqref{eqn:leftside} and \eqref{eqn:rightside}, we get:
\begin{align*}
[((\eps_0\ot \ell)&\rho\pi_\mathcal{E}\tau_2)\ot \pi_\Bq]\underline{\Delta}(\tb)
	+[((\eps_0\pi_\mathcal{E}\tau_2\ot \ell\pi_{\Bq})\underline{\Delta}\tau_1)\ot \pi_\Bq]\underline{\Delta}(\tb)
	\\
	&=(\eps_0\pi_\mathcal{E}\tau_2\ot\pi_\Bq )\underline{\Delta}(\tb)+(\eps_0\ot \ell\ot \id)(\pi_\mathcal{E}\tau_2\ot\pi_{\Bq}\ot \pi_\Bq)(\id\ot \underline{\Delta}\tau_1)\underline{\Delta}(\tb).
\end{align*}
By \eqref{eqn:H1}, the first summand in the left hand side becomes $(\pi_\mathcal{E}\tau_2\ot \pi_\Bq)\underline{\Delta}(\tb)$. Hence  we obtain \eqref{eqn:construction-coaction-equallity}.
\epf

\begin{lemma}\label{lem:H2}
Assume that \eqref{eqn:H1} holds and 
\begin{align}\label{eqn:H2}\tag{{\bf H2}}			[\eps_0\pi_\mathcal{E}\tau_2\ot ((\pi_\mathcal{E}\tau_2\ot \pi_\Bq)\underline{\Delta}\tau_1)]\underline{\Delta}(\tb)&=0,  \quad \text{all }b\in\mathbb{B}.
\end{align}
Then the linear map $\omega\colon \Bq\to\mE$ defined in the basis $\mathbb{B}$ via:
	\begin{align}\label{eqn:general-section}
	\omega(b)\coloneqq \ell(b)-(\eps_0\ot\ell)(\pi_\mathcal{E}\tau_2\ot \pi_\Bq)\underline{\Delta}(\tb)
\end{align}
is a convolution invertible comodule map.
\end{lemma}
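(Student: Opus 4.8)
The statement asserts two things about $\omega$: that it is a morphism of right $\Bq$-comodules, i.e.\ $\rho\,\omega=(\omega\ot\id)\Delta$, and that it is convolution invertible. The plan is to treat these separately, the comodule property being the substantial one, and to reduce it entirely to the identity \eqref{eqn:construction-coaction-equallity} together with the hypothesis \eqref{eqn:H2}.

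First I would set $\Phi:=\ell-\omega$, so that $\Phi(b)=(\eps_0\ot\ell)(\pi_{\mE}\tau_2\ot\pi_{\Bq})\underline{\Delta}(\tb)$ by \eqref{eqn:general-section}; since every map in sight is linear I argue on the basis $\BB$ and extend. Two bookkeeping identities organize the computation: (i) the projections satisfy $\iota\pi_{\Bq}=\tau_1$, whence $\pi_{\mE}\tau_1=\ell\pi_{\Bq}$, $\pi_{\Bq}\tau_1=\pi_{\Bq}$ and $\pi_{\Bq}\tau_2=0$; and (ii) $\pi_{\Bq}$ is a coalgebra map, so $(\pi_{\Bq}\ot\pi_{\Bq})\underline{\Delta}=\underline{\Delta}\pi_{\Bq}$. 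Feeding (i)--(ii) into \eqref{eqn:rho1}, the summand $(\pi_{\mE}\tau_1\ot\pi_{\Bq})\underline{\Delta}(\tb)$ collapses to $(\ell\ot\id)\underline{\Delta}(b)$, which yields the clean formula $\rho\,\ell(b)=(\ell\ot\id)\Delta(b)+M(b)$, where $M(b):=(\pi_{\mE}\tau_2\ot\pi_{\Bq})\underline{\Delta}(\tb)$. Subtracting, the desired identity $\rho\,\omega=(\omega\ot\id)\Delta$ becomes equivalent to
\[
\rho\,\Phi(b)=M(b)+(\Phi\ot\id)\Delta(b).
\]

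The crux is to compute the left-hand side. Here I use that $\Phi$ factors through $\pi_{\mE}$: from $\pi_{\mE}\tau_1=\ell\pi_{\Bq}$ one writes $\Phi=\pi_{\mE}\Psi$ with $\Psi(\tb):=(\eps_0\pi_{\mE}\tau_2\ot\tau_1)\underline{\Delta}(\tb)\in\hat{\Bq}$, so that \eqref{eqn:coaction-of-E} gives $\rho\,\Phi(b)=(\pi_{\mE}\ot\pi_{\Bq})\Delta_{\hat{\Bq}}\Psi(\tb)$. Expanding $\Delta_{\hat{\Bq}}$ on the positive-degree element $\tau_1(\tb_{\underline{(2)}})$ and splitting $\pi_{\mE}=\pi_{\mE}\tau_1+\pi_{\mE}\tau_2$ produces four terms: $\Phi(b)\ot 1$; the term $1\ot\overline{M}(b)$ with $\overline{M}(b):=(\eps_0\ot\id)M(b)$; a ``$\tau_1$-term'' $C_1=[\eps_0\pi_{\mE}\tau_2\ot((\pi_{\mE}\tau_1\ot\pi_{\Bq})\underline{\Delta}\tau_1)]\underline{\Delta}(\tb)$; and a ``$\tau_2$-term'' $C_2$ which is exactly the left-hand side of \eqref{eqn:H2}, hence vanishes. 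Finally I rewrite $\Phi$ on $b_{\underline{(1)}}$ using $\iota\pi_{\Bq}=\tau_1$ once more to recognize $(\Phi\ot\id)\underline{\Delta}(b)=[((\eps_0\pi_{\mE}\tau_2\ot\pi_{\mE}\tau_1)\underline{\Delta}\tau_1)\ot\pi_{\Bq}]\underline{\Delta}(\tb)$, which is the second summand on the left of \eqref{eqn:construction-coaction-equallity}, while $C_1$ is the second summand on its right. Thus \eqref{eqn:construction-coaction-equallity} rearranges precisely to $M(b)+(\Phi\ot\id)\underline{\Delta}(b)=1\ot\overline{M}(b)+C_1$; adding $\Phi(b)\ot 1$ and using $\Phi(1)=0$ turns this into the displayed identity, which proves the comodule property.

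For convolution invertibility I note that $\underline{\Delta}(1)=0$ forces $\omega(1)=\ell(1)=1$, a unit of $\mE$; since $\Bq=\bigoplus_{n\ge 0}\Bq_n$ is graded connected ($\Bq_0=\k$), a map into an algebra sending $1$ to an invertible element admits a convolution inverse, built by induction on the grading. I expect the main obstacle to be the bookkeeping of the previous paragraph: matching each term produced by $\Delta_{\hat{\Bq}}$ against the correct summand of \eqref{eqn:construction-coaction-equallity} and identifying the residual $\pi_{\mE}\tau_2$-term as \eqref{eqn:H2}. The structural fact $\iota\pi_{\Bq}=\tau_1$---equivalently $\Phi=\pi_{\mE}\Psi$---is the linchpin that makes $\rho\,\Phi$ accessible through \eqref{eqn:coaction-of-E}.
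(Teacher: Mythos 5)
Your proposal is correct and follows essentially the same route as the paper's proof: writing the correction term as $\pi_{\mE}$ applied to an element of $\hat{\Bq}$ (via $\ell\pi_{\Bq}=\pi_{\mE}\tau_1$, i.e.\ $\iota\pi_{\Bq}=\tau_1$), computing its coaction through \eqref{eqn:coaction-of-E}, splitting $\id_{\hat{\Bq}}=\tau_1+\tau_2$ so that \eqref{eqn:H2} kills the $\tau_2$-term, and then invoking \eqref{eqn:construction-coaction-equallity} to match the remaining summands, with convolution invertibility from $\omega(1)=1$. The introduction of $\Phi=\ell-\omega$ and $\Psi$ is only a notational repackaging of the same computation.
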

\pf
We have $\rho(\omega(b))=\rho\ell(b)-(\eps_0\ot\rho\ell)(\pi_\mathcal{E}\tau_2\ot \pi_\Bq)\underline{\Delta}(\tb)$. Recall that we have already described $\rho\ell(b)=\rho(\pi_{\mE}(\tb))$ in \eqref{eqn:rho1}. Now, since  $\id_{\hat{\Bq}}=\tau_1+\tau_2$:
\begin{align*}
(\eps_0&\ot\rho\ell)(\pi_\mathcal{E}\tau_2\ot \pi_\Bq)\underline{\Delta}(\tb)=	(\eps_0\pi_\mathcal{E}\tau_2\ot \rho\pi_\mathcal{E}\tau_1)\underline{\Delta}(\tb)
\\
&\stackrel{\eqref{eqn:coaction-of-E}}{=}(\eps_0\pi_\mathcal{E}\tau_2\ot (\pi_\mathcal{E}\ot\pi_{\Bq})\Delta\tau_1)\underline{\Delta}(\tb)
=(\eps_0\pi_\mathcal{E}\tau_2\ot \pi_\mathcal{E}\tau_1)\underline{\Delta}(\tb)\ot 1\\
&\qquad \qquad +(\eps_0\pi_\mathcal{E}\tau_2\ot \pi_\Bq)\underline{\Delta}(\tb)+\Big(\eps_0\pi_\mathcal{E}\tau_2\ot ((\pi_\mathcal{E}(\tau_1+\tau_2)\ot \pi_\Bq)\underline{\Delta}\tau_1)\Big)\underline{\Delta}(\tb)\\
&\stackrel{\eqref{eqn:H2}}{=}(\eps_0\pi_\mathcal{E}\tau_2\ot \pi_\mathcal{E}\tau_1)\underline{\Delta}(\tb)\ot 1+(\eps_0\pi_\mathcal{E}\tau_2\ot \pi_\Bq)\underline{\Delta}(\tb)\\
&\qquad \qquad +\Big(\eps_0\pi_\mathcal{E}\tau_2\ot ((\pi_\mathcal{E}\tau_1\ot \pi_\Bq)\underline{\Delta}\tau_1)\Big)\underline{\Delta}(\tb).
\end{align*}
Observe that 
$(\eps_0\pi_\mathcal{E}\tau_2\ot \pi_\mathcal{E}\tau_1)\underline{\Delta}(\tb)\ot 1=(\eps_0\pi_\mathcal{E}\tau_2\ot \ell\pi_{\Bq})\underline{\Delta}(\tb)\ot 1$, 
hence
\begin{align*}
	\rho&(\omega(b))=\omega(b)\ot 1+1\ot b+(\pi_\mathcal{E}\tau_2\ot \pi_\Bq)\underline{\Delta}(\tb)
	+(\pi_\mathcal{E}\tau_1\ot \pi_\Bq)\underline{\Delta}(\tb)\\
	&\qquad -(\eps_0\pi_\mathcal{E}\tau_2\ot \pi_\Bq)\underline{\Delta}(\tb)-\Big(\eps_0\pi_\mathcal{E}\tau_2\ot ((\pi_\mathcal{E}\tau_1\ot \pi_\Bq)\underline{\Delta}\tau_1)\Big)\underline{\Delta}(\tb)\\
	&\stackrel{\eqref{eqn:construction-coaction-equallity}}{=}\omega(b)\ot 1+1\ot b+
	(\pi_\mathcal{E}\tau_1\ot \pi_\Bq)\underline{\Delta}(\tb) - 
	[((\eps_0\pi_\mathcal{E}\tau_2\ot \pi_\mathcal{E}\tau_1)\underline{\Delta}\tau_1)\ot \pi_\Bq]\underline{\Delta}(\tb)\\
	&=\omega(b)\ot 1+1\ot b+(\omega\ot\id)(\pi_\Bq\ot \pi_\Bq)\underline{\Delta}(\tb)
	=(\omega\ot\id)\Delta(b).
\end{align*}
Therefore,  $\rho\omega(b)=(\omega\ot \id)\Delta(b)$, for all $b\in\BB$, and thus it is a comodule map. 

Finally $\omega$ is convolution invertible, as $\omega(1)=1$.
\epf

\begin{theorem}\label{thm:H12}
	Let $\Bq$ and $\mathcal{E}\in\Cleft_H(\Bq)$ be as before. Assume that \textbf{H1} and \textbf{H2} hold. 
		If $\omega$ as in \eqref{eqn:general-section} is $H$-linear, then $\sigma=\eps_0(\omega\ot\omega)\in Z^2(\Bq)^H$ and $\mE\simeq \Bq_{(\sigma)}$.
\end{theorem}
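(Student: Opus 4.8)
The plan is to read the statement as a direct consequence of Proposition~\ref{pro:intro} applied to the map $\gamma=\omega$ produced by Lemma~\ref{lem:H2}. Since \eqref{eqn:H1} and \eqref{eqn:H2} are assumed, that lemma already asserts that $\omega$ is a convolution invertible comodule map, and from its defining formula \eqref{eqn:general-section} one reads off $\omega(1)=\ell(1)=1$, because $\underline{\Delta}$ vanishes on $1$; moreover $\omega(b)=\ell(b)+(\text{terms }\ell\text{ of strictly lower degree})$, so $\omega$ is unitriangular with respect to the grading and hence bijective. Thus $\omega$ is a section, and by hypothesis it is $H$-linear. Consequently the only condition of Proposition~\ref{pro:intro} left to verify is the normalization $\eps_0\omega=\eps_\Bq$.

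To check this I would apply $\eps_0$ to \eqref{eqn:general-section}. The first term gives $\eps_0\ell(b)=\eps_\Bq(b)$ by the defining property of $\eps_0$, while the correction term becomes $(\eps_0\pi_\mathcal{E}\tau_2\ot\eps_\Bq\pi_\Bq)\underline{\Delta}(\tb)$, after using $\eps_0\ell=\eps_\Bq$ on the second tensorand. Here the point is that $\Bq$ is graded and connected, so $\underline{\Delta}(\tb)=\Delta(\tb)-\tb\ot1-1\ot\tb$ is a sum of tensors whose second component is homogeneous of strictly positive degree; therefore $\eps_\Bq\pi_\Bq$ annihilates it and the correction term vanishes. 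This yields $\eps_0\omega(b)=\eps_\Bq(b)$ for every $b\in\mathbb{B}$, hence $\eps_0\omega=\eps_\Bq$ by linearity.

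With all the hypotheses of Proposition~\ref{pro:intro} now in force — namely $\omega$ an $H$-linear section with $\eps_0\omega=\eps_\Bq$ — that proposition applies verbatim and gives that $\sigma=\eps_0(\omega\ot\omega)$ is an $H$-invariant Hopf $2$-cocycle satisfying $\mE\simeq\Bq_{(\sigma)}$, which is exactly the claim. I do not expect any serious obstacle: the substantive work has already been carried out in Lemma~\ref{lem:H2}, so the proof reduces to assembling the hypotheses of Proposition~\ref{pro:intro}, and the single genuine computation is the counit identity $\eps_0\omega=\eps_\Bq$, whose only delicate point is the observation that connectedness of the grading forces $\underline{\Delta}$ to land in strictly positive degree in each factor.
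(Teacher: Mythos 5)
Your proposal is correct and takes essentially the same route as the paper: invoke Lemma~\ref{lem:H2} to get that $\omega$ is a section and then apply Proposition~\ref{pro:intro} with $\gamma=\omega$. The only difference is one of detail: the paper asserts the normalization $\eps_0\omega=\eps_{\Bq}$ holds ``by definition,'' whereas you spell out the (correct) degree argument that the correction term in \eqref{eqn:general-section} lies in strictly positive degree and is therefore annihilated by $\eps_0$.
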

\pf
By Lemma \ref{lem:H2}, $\omega\colon \Bq\to\mE$ is a convolution-invertible comodule map with $\omega(1)=1$, that is, it is a section. 
The result follows by Proposition \ref{pro:intro} as  $\omega$ verifies $\eps_0\omega=\eps_{\Bq}$ by definition.
\epf

\begin{remark}
Conditions \eqref{eqn:H1} and \eqref{eqn:H2} are imposed to reduce the computational complexity in the case under study-namely, the computation of the Hopf 2-cocycles of type $A_2$ in the next Sections \ref{sec:general} and \ref{sec:N3}. However, we have found examples of diagonal type in which these hypotheses do not hold. 
\end{remark}


\section{Hopf algebras of Cartan type $A_2$: generic liftings}\label{sec:general}

We focus on deformations of a Nichols algebra of Cartan type $A_2$, with $q\in\G'_N$ and $N>2$, as in \eqref{eqn:relations-nichols}. 
We fix a semisimple Hopf algebra $H$ such that $\Bq_\bq\in\ydh$ via a YD-realization $(g_i,\chi_i)_{i\in\I_2}$.

Recall from Lemma \ref{lem:AAG} that we have two distinct cases, according to the possibility of deforming the quantum Serre relations.
In this section we compute the Hopf 2-cocycles involved in the liftings of $\Bq_\qb$ under the assumption that the relations $x_{112}=x_{122}=0$ hold. 
In particular, we get that $y_{112}=y_{122}=0$ in any cleft object $\mE_{\bs\lambda}$ as well; see \eqref{eqn:rels_cleft}.

We work in the setting of \eqref{eqn:coaction-of-E}, where $\hat{\Bq}=\tilde{\Bq}$ denotes the distinguished pre-Nichols algebra in \S\ref{sec:liftA2}.

\subsection{The coproduct}\label{sec:copro}
We begin by describing the coproduct $\Delta\colon \tilde{\Bq}\to \tilde{\Bq}\ot \tilde{\Bq}$. 
Consider the following elements in $\tilde{\Bq}\ot \tilde{\Bq}$:
\begin{align}\label{eqn:notation-for-coproduct}
	X_i\coloneqq&x_i\ot 1, &Z_i\coloneqq&1\ot x_i, &X_{12}\coloneqq&x_{12}\ot 1;& Z_{12}\coloneqq&1\ot x_{12}, & i\in\I_2.
\end{align} 
Note that $\Delta(x_i)=X_i+Z_i$ and $\Delta(x_{12})=X_{12}+(1-q^{-1})X_1Z_2+Z_{12}$.

For integers $n_2, n_{12}, n_1\in \mathbb{Z}_{\geq 0}$, we fix the set of indexes:
$$P(n_2, n_{12}, n_1)\coloneqq\{(j,k,l,m)\in \mathbb{Z}^4\colon0\leq j\leq n_2,\, 0\leq l\leq k\leq  n_{12},\, 0\leq m\leq n_1\}$$ 
and $\underline{P}(n_2, n_{12}, n_1)\coloneqq P(n_2, n_{12}, n_1)\setminus\{(0,0,0,0),(n_2,n_{12},n_{12},n_1)\}$.

If $(j,k,l,m)\in P(n_2, n_{12}, n_1)$, then we set:
\begin{align*}
	&B_{(j,k,l,m)}\coloneqq \binom{n_2}{j}_{q}\binom{n_{12}}{k}_{q}\binom{k}{l}_{ q}\binom{n_1}{m}_{q},\\
	&Q_{(j,k,l,m)}\coloneqq (1-q^{-1})^{k-l}q_{21}^{(n_2-j)(k+m)+m(n_{12}-l)+\frac{(k-l-1)(k-l)}{2}}q^{(n_2-j)l+m(n_{12}-k)}.
\end{align*}
We also set $C_{(j,k,l,m)}\coloneqq B_{(j,k,l,m)}Q_{(j,k,l,m)}$ and write $C_{(k,m)}=C_{(0,k,0,m)}$.

\begin{lemma}\label{lem:coproduct in pre-Nichols}
	Let $n_2, n_{12}, n_1\in \mathbb{Z}_{\geq 0}$, then
	\begin{align}\label{eqn:coproduct-preNichols}
		\Delta(x_2^{n_2}x_{12}^{n_{12}}x_1^{n_1})=\sum_{\mathclap{P(n_2, n_{12}, n_1)}}C_{(j,k,l,m)}x_2^jx_{12}^lx_1^{k+m-l}\ot x_2^{n_2+k-j-l}x_{12}^{n_{12}-k}x_1^{n_1-m}.
	\end{align}
\end{lemma}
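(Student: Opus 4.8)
The plan is to prove the coproduct formula \eqref{eqn:coproduct-preNichols} by establishing the three single-variable formulas first and then assembling them. Since $\tilde{\Bq}$ is a braided Hopf algebra and $\Delta$ is an algebra map into $\tilde{\Bq}\ot\tilde{\Bq}$ (with the braided multiplication on the tensor product), I would first record the primitive-type coproducts $\Delta(x_i)=X_i+Z_i$ and $\Delta(x_{12})=X_{12}+(1-q^{-1})X_1Z_2+Z_{12}$ already noted in \S\ref{sec:copro}, together with the relevant $q$-commutation relations among the symbols $X_i, Z_i, X_{12}, Z_{12}$ coming from the braiding. The key point is that in the braided tensor product, $Z_i X_j = c$-twisted $X_j Z_i$, so the various factors $x_2^{n_2}$, $x_{12}^{n_{12}}$, $x_1^{n_1}$ do not simply multiply independently; the powers of $q$ and $q_{21}$ collected in $Q_{(j,k,l,m)}$ are exactly the braiding scalars that arise when commuting the left tensor-factors past the right tensor-factors to reach the normal-ordered PBW form $x_2^jx_{12}^lx_1^{k+m-l}\ot x_2^{n_2+k-j-l}x_{12}^{n_{12}-k}x_1^{n_1-m}$.

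The main computational engine is the braided $q$-binomial theorem. First I would prove $\Delta(x_1^{n_1})=\sum_{m}\binom{n_1}{m}_q (\text{scalar})\, x_1^m\ot x_1^{n_1-m}$ and similarly $\Delta(x_2^{n_2})=\sum_j\binom{n_2}{j}_q(\text{scalar})\,x_2^j\ot x_2^{n_2-j}$, each by induction on the exponent using $\Delta(x_i^{n})=\Delta(x_i)\Delta(x_i)^{n-1}=(X_i+Z_i)\Delta(x_i^{n-1})$ and the $q$-commutation $Z_iX_i=q_{ii}X_iZ_i=qX_iZ_i$, which yields the quantum binomial coefficients $\binom{n}{m}_q$. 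The $x_{12}$ power is the delicate one: from $\Delta(x_{12})=X_{12}+(1-q^{-1})X_1Z_2+Z_{12}$ one has a \emph{three-term} primitive, so $\Delta(x_{12}^{n_{12}})$ expands via a $q$-multinomial-type identity, producing both the index $k$ (total number of $X_{12}$-plus-middle contributions) and the index $l$ (splitting off the genuine $x_{12}\ot x_{12}$ part from the mixed $X_1Z_2$ part), which is precisely why the summation set $P(n_2,n_{12},n_1)$ carries the constraint $0\le l\le k\le n_{12}$ and why $\binom{n_{12}}{k}_q\binom{k}{l}_q$ appears in $B_{(j,k,l,m)}$.

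After the three individual coproducts are in hand, I would multiply them in the braided tensor algebra, $\Delta(x_2^{n_2}x_{12}^{n_{12}}x_1^{n_1})=\Delta(x_2^{n_2})\,\Delta(x_{12}^{n_{12}})\,\Delta(x_1^{n_1})$, and normal-order the result. The bookkeeping here is to move every right-hand tensor factor of an earlier block past every left-hand tensor factor of a later block, each such commutation contributing a braiding scalar; collecting these gives the exponents of $q_{21}$ and $q$ recorded in $Q_{(j,k,l,m)}$, while the $(1-q^{-1})^{k-l}$ factor tracks how many mixed $X_1Z_2$ terms were selected. I expect the \textbf{main obstacle} to be exactly this accounting of the braiding scalars — verifying that the combined exponent of $q_{21}$ is $(n_2-j)(k+m)+m(n_{12}-l)+\tfrac{(k-l-1)(k-l)}{2}$ and of $q$ is $(n_2-j)l+m(n_{12}-k)$, including the quadratic self-braiding term $\tfrac{(k-l-1)(k-l)}{2}$ that comes from reordering the $x_{12}$-block internally. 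This is best handled by a careful induction on $n_{12}$ (the most complex block), reducing the general statement to compatibility of the scalar $C_{(j,k,l,m)}$ under the recursion $\Delta(x_{12}^{n_{12}})=(X_{12}+(1-q^{-1})X_1Z_2+Z_{12})\Delta(x_{12}^{n_{12}-1})$, checking the coefficient identity term-by-term against the $q$-Pascal relations for $\binom{n_{12}}{k}_q$ and $\binom{k}{l}_q$; the single-variable $x_1,x_2$ blocks then attach by a routine application of the $q$-binomial theorem.
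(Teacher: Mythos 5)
Your proposal is correct and follows essentially the same route as the paper: compute $\Delta(x_2^{n_2})$, $\Delta(x_{12}^{n_{12}})$, $\Delta(x_1^{n_1})$ separately via the braided $q$-binomial theorem, then multiply in the braided tensor product and collect the braiding scalars into $Q_{(j,k,l,m)}$. The only difference is one of execution: where you propose an induction on $n_{12}$ checked against $q$-Pascal relations, the paper handles the three-term primitive $\Delta(x_{12})$ more directly by observing that $Z_{12}$ $q$-commutes with the grouped sum $X_{12}+(1-q^{-1})X_1Z_2$ and that $X_1Z_2$ $q$-commutes with $X_{12}$, so the expansion is just two nested applications of the $q$-binomial theorem, yielding the indices $k$ and $l$ and the factor $\binom{n_{12}}{k}_q\binom{k}{l}_q(1-q^{-1})^{k-l}q_{21}^{(k-l-1)(k-l)/2}$ without any separate inductive verification.
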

\pf
We first observe that $\Delta(x_i^{n})=(Z_i+X_i)^n$ and that $Z_iX_i=qX_iZ_i$. Hence, 
\begin{align*}
	\Delta(x_i^n)=\sum_{k=0}^n\binom{n}{k}_qX_i^kZ_i^{n-k}=\sum_{k=0}^n\binom{n}{k}_qx_i^k\ot x_i^{n-k}, \ i\in\I_2.
\end{align*}
On the other hand, as $Z_{12}(X_{12}+(1-q^{-1})X_1Z_2)=q(X_{12}+(1-q^{-1})X_1Z_2)Z_{12}$ and $X_1Z_2X_{12}=qX_{12}X_1Z_2$ we obtain:
\begin{align*}
	((1-q^{-1})X_1Z_2+X_{12})^k
	=&\sum_{l=0}^k\binom{k}{l}_q(1-q^{-1})^{k-l}q_{21}^{\frac{(k-l-1)(k-l)}{2}}X_{12}^lX_1^{k-l}Z_2^{k-l}.
\end{align*}
Therefore, this gives:
\begin{align*}
	\Delta(x_{12}^{n_{12}})=\Delta(x_{12})^{n_{12}}
	=\sum_{k=0}^{n_{12}}\sum_{l=0}^k\binom{n_{12}}{k}_q\binom{k}{l}_q(1-q^{-1})^{k-l}x_{12}^lx_1^{k-l}\ot x_2^{k-l}x_{12}^{n_{12}-k}.
\end{align*}
Finally, combining the expressions for the coproducts and the braiding relations, the formula follows.
\epf
In particular, for each $n_2, n_{12}, n_1\in\mathbb{Z}_{\geq 0}$ we obtain: 
\begin{align*}
	\underline{\Delta}(x_2^{n_2}x_{12}^{n_{12}}x_1^{n_1})=\sum_{\mathclap{\underline{P}(n_2, n_{12}, n_1)}}C(j,k,l,m)x_2^jx_{12}^lx_1^{k+m-l}\ot x_2^{n_2+k-j-l}x_{12}^{n_{12}-k}x_1^{n_1-m}.
\end{align*}

\begin{remark}\label{rem:coproduct-in-Bq}
	Formula \eqref{eqn:coproduct-preNichols} also applies to the coproduct $\Delta:\Bq_\bq\to\Bq_\bq\ot \Bq_\bq$, provided the exponents do not exceed $N - 1$.
\end{remark}


\subsection{The section} 

We now verify that Hypotheses \textbf{H1} and \textbf{H2} are satisfied. Consequently, we obtain a section $\gamma_{\bs\lambda}$ as in \eqref{eqn:general-section}, using Theorem \ref{thm:H12}.
		
		\begin{lemma}\label{lem:B-verifies-H1-H2}
The Hypotheses \textbf{H1} and \textbf{H2} hold.
		\end{lemma}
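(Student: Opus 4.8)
The plan is to verify both hypotheses by a direct degree count on the coproduct formula \eqref{eqn:coproduct-preNichols}, exploiting that in the generic case $\mE_{\bs\lambda}$ is an algebra quotient of $\tilde{\Bq}$ (Lemma \ref{lem:AAG}), with $\mathfrak{J}=\ker\pi_\Bq$ spanned by the PBW monomials $x_2^{a}x_{12}^{b}x_1^{c}$ having $\max\{a,b,c\}\ge N$, and $\pi_{\mE}$ the reduction given by $y_i^N=\lambda_i$. Under the decomposition $\tilde{\Bq}=\iota(\Bq)\oplus\mathfrak{J}$, the projection $\tau_1$ keeps the monomials with all exponents $\le N-1$ and $\tau_2$ keeps the rest. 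The key observation is that in each summand of \eqref{eqn:coproduct-preNichols} the left leg $x_2^{j}x_{12}^{l}x_1^{k+m-l}$ automatically has $j\le n_2\le N-1$ and $l\le n_{12}\le N-1$; hence it lies in $\mathfrak{J}$ exactly when its $x_1$-exponent satisfies $k+m-l\ge N$, and then, since $k+m-l\le 2N-2$, a single reduction gives $\pi_{\mE}\tau_2(x_2^{j}x_{12}^{l}x_1^{k+m-l})=\lambda_1\, y_2^{j}y_{12}^{l}y_1^{k+m-l-N}$.

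For \eqref{eqn:H1} I would analyze the map $\Phi:=(\eps_0\ot\ell)\rho\colon\mE\to\mE$. Using $\rho\pi_{\mE}=(\pi_{\mE}\ot\pi_\Bq)\Delta$ together with \eqref{eqn:coproduct-preNichols}, a short computation shows that on a PBW element $y_2^{a}y_{12}^{b}y_1^{c}$ the map $\Phi$ acts as the identity plus correction terms indexed by $(0,K,0,M)$ with $K+M=N$, $K\le b$ and $M\le c$ (these are exactly the terms for which the left leg becomes large and its reduced image has nonzero $\eps_0$). Applying this to the left legs produced above, namely $e=\lambda_1 y_2^{j}y_{12}^{l}y_1^{k+m-l-N}$, a correction term would require $K+M=N$ with $K\le l$ and $M\le k+m-l-N$, forcing $k+m\ge 2N$; this is impossible because $k\le n_{12}\le N-1$ and $m\le n_1\le N-1$. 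Hence $\Phi$ fixes every left leg appearing in $(\pi_{\mE}\tau_2\ot\pi_\Bq)\underline{\Delta}(\tb)$, which is precisely \eqref{eqn:H1}.

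For \eqref{eqn:H2} I would first determine when the outer scalar $\eps_0\pi_{\mE}\tau_2$ is nonzero on a left leg: by the observation above this needs $k+m-l\ge N$ and the reduced image to have nonzero counit, i.e. $j=l=0$ and $k+m=N$. For such a summand the relevant right leg is $w=x_2^{n_2+k}x_{12}^{n_{12}-k}x_1^{n_1-m}$, and it remains to show $(\pi_{\mE}\tau_2\ot\pi_\Bq)\underline{\Delta}(\tau_1 w)=0$. If $n_2+k\ge N$ then $\tau_1 w=0$ and we are done; otherwise $w\in\iota(\Bq)$, and the left leg of $\underline{\Delta}(w)$ has $x_1$-exponent at most $(n_{12}-k)+(n_1-m)=n_{12}+n_1-N$, which is $<N$ since $n_{12},n_1\le N-1$. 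Thus that left leg never enters $\mathfrak{J}$, so $\pi_{\mE}\tau_2$ annihilates it and the whole expression vanishes, giving \eqref{eqn:H2}.

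The routine parts are the two coproduct expansions and the identification of $\Phi$; I expect the only real subtlety to be the bookkeeping that isolates where ``largeness'' can occur. The conceptual crux, which drives both verifications, is that all the excess degree is confined to the $x_1$-exponent of the left tensor leg, so that the uniform bound $n_i\le N-1$ rules out the double-degree configurations ($k+m\ge 2N$, resp. $n_{12}+n_1\ge 2N$) that would otherwise obstruct \eqref{eqn:H1} and \eqref{eqn:H2}; the vanishing $\binom{N}{k}_q=0$ for $0<k<N$ (as $q\in\G'_N$) underlies the reductions used throughout.
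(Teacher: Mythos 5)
Your proof is correct and takes essentially the same approach as the paper's: both rest on the coproduct formula \eqref{eqn:coproduct-preNichols}, the observation that all excess degree in the left tensor leg sits in its $x_1$-exponent (so a single reduction $y_1^N=\lambda_1$ suffices, since $k+m-l\le 2N-2$ rules out a double reduction), and the resulting fixed-point property of $(\eps_0\ot\ell)\rho$ on the reduced left legs, which is \eqref{eqn:H1}. Your bound $(n_{12}-k)+(n_1-m)=n_{12}+n_1-N<N$ on the right legs is exactly the paper's argument for \eqref{eqn:H2}, only with the harmless extra step of first discarding the summands killed by $\eps_0$.
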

		\pf
		Fix $n_2, n_{12}, n_1\in \I_{N-1}^\circ$. On the one hand, when $n_{12}+n_1<N$, \eqref{eqn:coproduct-preNichols} gives $(\tau_2\ot\id) \underline{\Delta}(x_2^{n_2}x_{12}^{n_{12}}x_1^{n_1})=0$; hence it follows from \eqref{eqn:coaction-of-E} that
		\begin{align}\label{eqn:ast}
(\eps_0\ot \ell)\rho(y_2^{n_2}y_{12}^{n_{12}}y_1^{n_1})=1\ot y_2^{n_2}y_{12}^{n_{12}}y_1^{n_1}.
		\end{align}
		Assume now that $n_{12}+n_1\geq N$. As $n_{12},n_1<N$, then $N\leq n_{12}+n_1<2N$. 
		Set $$S=S(n_2, n_{12}, n_1)\coloneqq\{(j,k,l,m)\in \underline{P}(n_2, n_{12}, n_1)\colon k+m-l\geq N\}.$$
	Note that $(j,k,l,m)\in S$ implies $k+m-N<N$.	Then we get
		\begin{align*}
			&(((\eps_0\ot\ell)\rho\pi_\mathcal{E}\tau_2)\ot \pi_\Bq)\underline{\Delta}(x_2^{n_2}x_{12}^{n_{12}}x_1^{n_1})
			\\&=\lambda_1(((\eps_0\ot\ell)\rho)\ot \pi_\Bq)(\sum_{s\in S}C_{s}\,y_2^jy_{12}^ly_1^{k+m-N-l}\ot x_2^{n_2+k-j-l}x_{12}^{n_{12}-k}x_1^{n_1-m})
			\\&\stackrel{\eqref{eqn:ast}}{=}\lambda_1(\id\ot \pi_\Bq)(\sum_{s\in S}C_{s}\, y_2^jy_{12}^ly_1^{k+m-N-l}\ot x_2^{n_2+k-j-l}x_{12}^{n_{12}-k}x_1^{n_1-m})
			\\&=(\pi_\mathcal{E}\tau_2\ot \pi_\Bq)(\sum_{s\in \underline{P}}C_{s}\, x_2^jx_{12}^lx_1^{k+m-l}\ot x_2^{n_2+k-j-l}x_{12}^{n_{12}-k}x_1^{n_1-m})
			\\&=(\pi_\mathcal{E}\tau_2\ot \pi_\Bq)\underline{\Delta}(x_2^{n_2}x_{12}^{n_{12}}x_1^{n_1}).
		\end{align*}
		Therefore \textbf{H1} holds.
				To check \textbf{H2}, we observe that for $n_2, n_{12}, n_1<N$, we have:
		$$(\tau_2\ot \id)\underline{\Delta}(x_2^{n_2}x_{12}^{n_{12}}x_1^{n_1})=\sum_{\mathclap{S(n_2, n_{12}, n_1)}}C_{(j,k,l,m)}x_2^jx_{12}^lx_1^{k+m-l}\ot x_2^{n_2+k-j-l}x_{12}^{n_{12}-k}x_1^{n_1-m}.$$
		Thus in any case, as $n_{12}-k+n_1-m<N$, it follows $[\tau_2\ot ((\tau_2\ot \id)\underline{\Delta}\tau_1)]\underline{\Delta}=0$.
		\epf
		
		\begin{corollary}\label{cor:gamma-formula-QS-no-deformed}
			The linear map $\gamma_{\bs\lambda}\colon\Bq_\bq\to \mathcal{E}_{\bs\lambda}$ given in the basis $\mathbb{B}_\qb$ by
			\begin{align*}
				\gamma_{\bs\lambda}(x_2^{n_2}x_{12}^{n_{12}}x_1^{n_1})=\begin{cases}
					y_2^{n_2}y_{12}^{n_{12}}y_1^{n_1},  & n_{12}+n_1<N,\\
					y_2^{n_2}y_{12}^{n_{12}}y_1^{n_1}-\lambda_1\sum\limits_{\mathclap{\substack{\underline{P}(n_2, n_{12}, n_1)\\k+m=N\\n_2+k<N}}}C_{(k,m)} y_2^{n_2+k}y_{12}^{n_{12}-k}y_1^{n_1-m},  & n_{12}+n_1\geq N.
				\end{cases}
			\end{align*}
			is a section in $H\mod$ such that $\eps_0\gamma_{\bs\lambda}=\eps_{\Bq_\bq}$.
		\end{corollary}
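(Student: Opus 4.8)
The plan is to evaluate the general section formula \eqref{eqn:general-section} on the PBW basis $\BB_\bq$. By Lemma \ref{lem:B-verifies-H1-H2}, hypotheses \textbf{H1} and \textbf{H2} hold in the present setting $\hat{\Bq}=\tilde{\Bq}$, so Lemma \ref{lem:H2} applies and the map $\gamma_{\bs\lambda}\coloneqq\omega$ is automatically a convolution-invertible comodule map with $\omega(1)=1$. Thus the only substantive tasks are (i) to make $\omega$ explicit, recovering the two-case formula, and (ii) to check $H$-linearity, which is the hypothesis needed to conclude via Theorem \ref{thm:H12} and which also yields $\eps_0\gamma_{\bs\lambda}=\eps_{\Bq_\bq}$.

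For (i), I would unwind the subtracted term $(\eps_0\ot\ell)(\pi_\mathcal{E}\tau_2\ot \pi_\Bq)\underline{\Delta}(\tb)$ on $\tb=x_2^{n_2}x_{12}^{n_{12}}x_1^{n_1}$, $n_2,n_{12},n_1\in\I_{N-1}^\circ$, feeding in the coproduct of Lemma \ref{lem:coproduct in pre-Nichols}. The guiding observation is that, under the PBW decomposition $\tilde{\Bq}=\iota(\Bq_\bq)\oplus\mathfrak{J}$, the projection $\tau_2$ simply retains the monomials having some exponent $\geq N$ and kills the rest. Now in a summand $C_{(j,k,l,m)}\,x_2^jx_{12}^lx_1^{k+m-l}\ot x_2^{n_2+k-j-l}x_{12}^{n_{12}-k}x_1^{n_1-m}$ of $\underline{\Delta}(\tb)$, the left-leg exponents $j\leq n_2$ and $l\leq k\leq n_{12}$ are both $<N$, so the only one that can reach $N$ is $k+m-l$; hence $\tau_2$ keeps exactly the terms with $k+m-l\geq N$. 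On such a term $\pi_\mathcal{E}$ reduces $y_1^{k+m-l}=\lambda_1\,y_1^{k+m-l-N}$ (here $k+m-l<2N$, so a single reduction suffices, and only $\lambda_1$ enters), and then $\eps_0$ forces $j=0$, $l=0$ and $k+m-l-N=0$, i.e. $k+m=N$. Finally $\pi_\Bq$ on the right leg forces $n_2+k<N$ (the exponents $n_{12}-k$ and $n_1-m$ being automatically $<N$). Writing $C_{(k,m)}=C_{(0,k,0,m)}$, the subtracted term is therefore $\lambda_1\sum C_{(k,m)}\,y_2^{n_2+k}y_{12}^{n_{12}-k}y_1^{n_1-m}$ over $\{k+m=N,\,n_2+k<N\}\cap\underline{P}$, an index set that is empty when $n_{12}+n_1<N$. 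This is precisely the displayed two-case formula.

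The remaining, and most conceptual, point is (ii). Both $\Bq_\bq$ and $\mE_{\bs\lambda}$ are $H$-module algebras on which each PBW monomial is an eigenvector: $x_2^{n_2}x_{12}^{n_{12}}x_1^{n_1}$, and likewise its $y$-analogue, affords the character $\chi_2^{n_2}(\chi_1\chi_2)^{n_{12}}\chi_1^{n_1}$. The leading term $y_2^{n_2}y_{12}^{n_{12}}y_1^{n_1}$ of $\gamma_{\bs\lambda}(b)$ carries the same character as $b$, so I only need each correction term to match. A term $y_2^{n_2+k}y_{12}^{n_{12}-k}y_1^{n_1-m}$ with $k+m=N$ differs from $b$ by the character $\chi_2^{k}(\chi_1\chi_2)^{-k}\chi_1^{-m}=\chi_1^{-(k+m)}=\chi_1^{-N}$, which is trivial exactly when $\chi_1^N=\eps$. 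Since every correction term carries the scalar $\lambda_1$, it can be nonzero only when $\lambda_1\neq0$, in which case the restrictions \eqref{eqn:restictions-lambda} guarantee $\chi_1^N=\eps$; hence the characters agree and $\gamma_{\bs\lambda}$ is $H$-linear in all cases.

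With $H$-linearity in hand, Theorem \ref{thm:H12} (via Lemma \ref{lem:H2}) certifies that $\gamma_{\bs\lambda}$ is a section, and $\eps_0\gamma_{\bs\lambda}=\eps_{\Bq_\bq}$ holds by construction of $\omega$; alternatively, it is immediate from the explicit formula, since the leading monomial vanishes under $\eps_0$ unless $n_2=n_{12}=n_1=0$ and each correction monomial has positive $x_2$-degree $n_2+k\geq1$. The genuine care lies in the bookkeeping of the composite $\pi_\mathcal{E}\tau_2$—tracking exactly which single exponent can exceed $N$ and how $\eps_0$ then collapses the left leg—together with the character check that ties $H$-linearity to the constraint on $\lambda_1$; I regard these two points as the crux, the rest being routine.
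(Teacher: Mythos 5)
Your proposal is correct and takes essentially the same route as the paper: both obtain $\gamma_{\bs\lambda}$ as the map $\omega$ of \eqref{eqn:general-section}, invoke Lemma \ref{lem:B-verifies-H1-H2} together with Theorem \ref{thm:H12} for the section/comodule property and $\eps_0\gamma_{\bs\lambda}=\eps_{\Bq_\bq}$, and verify $H$-linearity by the same character computation, where $k+m=N$ and the constraint $\chi_1^N=\eps$ forced by $\lambda_1\neq 0$ make the characters agree. Your explicit unwinding of the correction term (only the exponent $k+m-l$ can reach $N$; then $\eps_0$ forces $j=l=0$ and $k+m=N$, while $\pi_\Bq$ forces $n_2+k<N$) spells out a computation the paper leaves implicit, but it is the same argument.
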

		\pf
		Observe that the formula for $\gamma_{\bs\lambda}$ is obtained as in \eqref{eqn:general-section} for this context; hence, it verifies $\eps_0\gamma_{\bs\lambda}=\eps_{\Bq_\bq}$. The fact that $\gamma_{\bs\lambda}$ is a comodule map follows from Theorem \ref{thm:H12} and Lemma \ref{lem:B-verifies-H1-H2}. Notice that if $\lambda_1\neq 0$ and $k+m=N$, then it is $\chi_2^{n_2+k+n_{12}-k}\chi_1^{n_{12}-k+n_1-m}=\chi_2^{n_2+n_{12}}\chi_1^{n_{12}+n_1}$. Hence, $\gamma_{\bs\lambda}\in H\mod$.
		\epf
		
		\subsection{Explicit computations in $\mE_{\bs\lambda}$}
		
		The computation of the 2-cocycle $\sigma_{\bs\lambda}$ requires expressing the product of two basis elements, $y_2^{n_2}y_{12}^{n_{12}}y_1^{n_1}$ and $y_2^{m_2}y_{12}^{m_{12}}y_1^{m_1}$, as a linear combination 
		$\sum_{k_2, k_{12}, k_1}\alpha_{(k_2,k_{12},k_1)}y_2^{k_2}y_{12}^{k_{12}}y_1^{k_1}$
		in the same basis. More precisely, we are interested in the component lying in the trivial stratum $\mE_0$ of the filtration; namely, the coordinate $\alpha_{(0,0,0)}=\eps_0(y_2^{n_2}y_{12}^{n_{12}}y_1^{n_1}\cdot y_2^{m_2}y_{12}^{m_{12}}y_1^{m_1})$.
		
We fully describe this product in Proposition \ref{pro:product}. 
We isolate a key step in a previous lemma. Fix, for $n_1,m_2\in \I^\circ_{N-1}$ and $0\leq s\leq \min\{n_1,m_2\}$:
\[
L_{(n_1,s,m_2)}=\binom{n_1}{s}_q\binom{m_2}{s}_q(s)!_qq_{12}^{m_2n_1-\frac{s(s+1)}{2}}.
\]
		
%
		\begin{lemma}\label{lem:x1-x2}
			Let $n_1,m_2\in\mathbb{Z}_{\geq 0}$. The following identity holds in $\mE_{\bs\lambda}$:
			\begin{align}\label{eqn:x1-over-x2-c-A}
				y_1^{n_1}y_2^{m_2}=\sum_{s=0}^{\mathclap{\min\{n_1,m_2\}}}L_{(n_1,s,m_2)}y_2^{m_2-s}y_{12}^sy_1^{n_1-s}.
			\end{align}
		\end{lemma}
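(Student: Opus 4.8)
The plan is to prove \eqref{eqn:x1-over-x2-c-A} using only the quadratic commutation relations among $x_1,x_2,x_{12}$, observing that the reordering never invokes the power relations $y_1^N=\lambda_1,\dots$; hence it suffices to work in $\tilde{\Bq}_{\qb}$ (where $x_{112}=x_{122}=0$), and the identity then descends to the quotient $\mE_{\bs\lambda}$. Recalling $q_{11}=q_{22}=q$, the relations $x_{12}=x_1x_2-q_{12}x_2x_1$, $x_{112}=0$ and $x_{122}=0$ read
\[
x_1x_2=q_{12}\,x_2x_1+x_{12},\qquad x_1x_{12}=q\,q_{12}\,x_{12}x_1,\qquad x_{12}x_2=q\,q_{12}\,x_2x_{12}.
\]

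First I would isolate the single-generator case as an auxiliary identity: for every $a\geq 0$,
\[
x_1x_2^a=q_{12}^a\,x_2^ax_1+(a)_q\,q_{12}^{a-1}\,x_2^{a-1}x_{12},
\]
which follows by an easy induction on $a$ from the first and third relations above, the inductive step collapsing via $1+q\,(a)_q=(a+1)_q$. Combined with the immediate consequence $x_1x_{12}^b=(q\,q_{12})^b\,x_{12}^bx_1$ of the second relation, this lets me push a single $x_1$ from the far left of a PBW monomial $x_2^{m_2-s}x_{12}^sx_1^{n_1-s}$ into its rightmost slot, producing exactly two terms. I then run the main induction on $n_1$: the base case $n_1=0$ is the normalization $L_{(0,0,m_2)}=1$, and for the step I apply $x_1$ to the inductive expression for $x_1^{n_1}x_2^{m_2}$ term by term. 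Each $x_1\cdot x_2^{m_2-s}x_{12}^sx_1^{n_1-s}$ splits into a clean term $q_{12}^{m_2-s}(q\,q_{12})^s\,x_2^{m_2-s}x_{12}^sx_1^{n_1+1-s}$ and a defect term $(m_2-s)_q\,q_{12}^{m_2-s-1}\,x_2^{m_2-s-1}x_{12}^{s+1}x_1^{n_1-s}$, the latter raising the $x_{12}$-exponent with no further commutation needed. Collecting the coefficient of $x_2^{m_2-t}x_{12}^tx_1^{n_1+1-t}$ (clean term with $s=t$, defect term with $s=t-1$) reduces the claim to the scalar recurrence
\[
L_{(n_1+1,t,m_2)}=q^t q_{12}^{m_2}\,L_{(n_1,t,m_2)}+(m_2-t+1)_q\,q_{12}^{m_2-t}\,L_{(n_1,t-1,m_2)}.
\]

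The main work — and the only real obstacle — is verifying this recurrence for the explicit coefficients $L_{(n_1,s,m_2)}=\binom{n_1}{s}_q\binom{m_2}{s}_q(s)!_q\,q_{12}^{m_2n_1-\frac{s(s+1)}{2}}$, where two bookkeeping points demand care: that all powers of $q_{12}$ line up to the common exponent $m_2(n_1+1)-\tfrac{t(t+1)}{2}$, and that the $q$-binomial relation $\binom{m_2}{t-1}_q(m_2-t+1)_q=(t)_q\binom{m_2}{t}_q$ rewrites the defect contribution as $\binom{m_2}{t}_q(t)!_q$ times $\binom{n_1}{t-1}_q$. After these reductions the recurrence becomes precisely the $q$-Pascal rule $\binom{n_1+1}{t}_q=q^t\binom{n_1}{t}_q+\binom{n_1}{t-1}_q$, which closes the induction. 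The summation bounds require no separate argument, since $\binom{n_1}{s}_q$ and $\binom{m_2}{s}_q$ vanish outside $0\le s\le\min\{n_1,m_2\}$, so $L_{(n_1,s,m_2)}=0$ there automatically.
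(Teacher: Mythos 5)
Your proof is correct and is essentially the paper's own argument in mirror image: the paper inducts on $m_2$ by right-multiplying by $y_2$ and pushing it leftward past powers of $y_1$ (via $y_1^ny_2=q_{12}^ny_2y_1^n+q_{12}^{n-1}(n)_qy_{12}y_1^{n-1}$), while you induct on $n_1$ by left-multiplying by $x_1$ and pushing it rightward past powers of $x_2$, both closing with the same $q$-Pascal identity $q^t\binom{n}{t}_q+\binom{n}{t-1}_q=\binom{n+1}{t}_q$. Your preliminary reduction to the pre-Nichols algebra $\tilde{\Bq}_{\qb}$ is valid here (in the generic setting $\mE_{\bs\lambda}$ is an algebra quotient of $\tilde{\Bq}_{\qb}$) and, together with the explicit coefficient recurrence, makes the bookkeeping slightly cleaner than the paper's, which splits into the cases $m_2\leq n_1$ and $m_2>n_1$.
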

		\pf
				We assume $m_2\leq n_1$; the other case is analogous. The proof proceeds by induction and makes use of the right identity in \eqref{eqn:binomial-gaussian-identities-1}, valid for $1 \leq k \leq n$, namely:
		\begin{align}\label{eqn:binomial-gaussian-identities}
			&{q}^k\binom{n}{k}_{q}+\binom{n}{k-1}_{q}=\binom{n+1}{k}_{q}.
		\end{align}	
		
First, it is easy to use induction to check that:
		\begin{align}\label{eqn:claim1}
			y_1^ny_2=q_{12}^{n}y_2y_1^n+q_{12}^{n-1}(n)_q y_{12}y_1^{n-1}, \quad n\in\I_N.
		\end{align}
Then, we proceed by induction on 	$0\leq m_2<n_1$: with the base case $m_2 = 0$ which is obvious. We have:
		\begin{align*}
			y_1^{n_1}&y_2^{m_2+1}=\sum_{0\leq s\leq m_2}L_{(n_1,m_2,s)}y_2^{m_2-s}y_{12}^sy_1^{n_1-s}y_2\\
			\stackrel{\eqref{eqn:claim1}}{=}&\sum_{0\leq s\leq m_2}L_{(n_1,m_2,s)}y_2^{m_2-s}y_{12}^s(q_{12}^{n_1-s}y_2y_1^{n_1-s}+q_{12}^{n_1-s-1}(n_1-s)_q y_{12}y_1^{n_1-s-1})\\
			\stackrel{\eqref{eqn:binomial-gaussian-identities}}{=}&\sum_{0< s\leq m_2}\left(\binom{m_2+1}{s}_q-\binom{m_2}{s-1}_q\right)\tfrac{(n_1)!_q}{(n_1-s)_q!}q_{12}^{(m_2+1)n_1-\tfrac{s(s+1)}{2}}y_2^{m_2+1-s}y_{12}^sy_1^{n_1-s}\\
			+&\sum_{0\leq s< m_2}\binom{m_2}{s}_q\tfrac{(n_1)!_q}{(n_1-s-1)!_q}q_{12}^{(m_2+1)n_1-\tfrac{s(s+1)}{2}-s-1}y_2^{m_2-s}y_{12}^{s+1}y_1^{n_1-(s+1)}\\
			+&q_{12}^{(m_2+1)n_1}y_2^{m_2+1}y_1^{n_1}+\tfrac{(n_1)!_q}{(n_1-(m_2+1))!_q}q_{12}^{(m_2+1)n_1-\tfrac{(m_2+2)(m_2+1)}{2}}y_{12}^{m_2+1}y_1^{n_1-(m_2+1)}\\
			\\
			=&\sum_{0\leq s\leq m_2+1}\binom{m_2+1}{s}_q\tfrac{(n_1)!_q}{(n_1-s)!_q}q_{12}^{(m_2+1)n_1-\tfrac{s(s+1)}{2}}y_2^{m_2+1-s}y_{12}^sy_1^{n_1-s},
		\end{align*}
and this is $\sum\limits_{\mathclap{0\leq s\leq m_2+1}}L_{(n_1,m_2+1,s)}y_2^{m_2+1-s}y_{12}^sy_1^{n_1-s}$; the lemma follows.
		\epf
We compute the product of two basic elements, and its component $\eps_0$ on $\k$.
\begin{proposition}\label{pro:product}
The product of $y_2^{n_2}y_{12}^{n_{12}}y_1^{n_1}$ and $y_2^{m_2}y_{12}^{m_{12}}y_1^{m_1}$ in $\mE_{\bs\lambda}$ is given by:
\begin{align*}
\sum_{s=0}^{\mathclap{{\min\{n_1,m_2\}}}}L_{(n_1,s,m_2)}(q_{12}q)^{n_{12}(m_2-s)+(n_1-s)m_{12}}y_2^{n_2+m_2-s}y_{12}^{n_{12}+m_{12}+s}y_1^{n_1+m_1-s}.
\end{align*}
\end{proposition}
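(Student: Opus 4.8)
The plan is to reduce the whole computation to the single crossing $y_1^{n_1}y_2^{m_2}$, which is precisely the content of Lemma \ref{lem:x1-x2}, and then to reorder the resulting monomials into PBW form using the quantum Serre relations available in this section. Writing out the product $y_2^{n_2}y_{12}^{n_{12}}y_1^{n_1}\cdot y_2^{m_2}y_{12}^{m_{12}}y_1^{m_1}$, the only factors out of order are the block $y_1^{n_1}$ sitting to the left of $y_2^{m_2}$. Substituting the identity of Lemma \ref{lem:x1-x2},
\[
y_1^{n_1}y_2^{m_2}=\sum_{s=0}^{\min\{n_1,m_2\}}L_{(n_1,s,m_2)}\,y_2^{m_2-s}y_{12}^sy_1^{n_1-s},
\]
the product turns into a sum over $s$ of terms $L_{(n_1,s,m_2)}\,y_2^{n_2}y_{12}^{n_{12}}\,y_2^{m_2-s}y_{12}^{s}y_1^{n_1-s}\,y_{12}^{m_{12}}y_1^{m_1}$.

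In each such term the remaining obstructions to PBW order are the blocks $y_{12}^{n_{12}}y_2^{m_2-s}$ and $y_1^{n_1-s}y_{12}^{m_{12}}$. Since we work in the generic setting of this section, the relations $y_{112}=y_{122}=0$ hold in $\mE_{\bs\lambda}$; recalling $q_{11}=q_{22}=q$, they read $y_1y_{12}=qq_{12}\,y_{12}y_1$ and $y_{12}y_2=qq_{12}\,y_2y_{12}$. Iterating gives the scalar commutation rules $y_{12}^{a}y_2^{b}=(qq_{12})^{ab}y_2^{b}y_{12}^{a}$ and $y_1^{a}y_{12}^{b}=(qq_{12})^{ab}y_{12}^{b}y_1^{a}$. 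Applying the first with $(a,b)=(n_{12},m_2-s)$ and the second with $(a,b)=(n_1-s,m_{12})$, and then merging the adjacent powers of $y_2$, $y_{12}$ and $y_1$, the $s$-th term becomes
\[
L_{(n_1,s,m_2)}(qq_{12})^{n_{12}(m_2-s)+(n_1-s)m_{12}}\,y_2^{n_2+m_2-s}y_{12}^{n_{12}+m_{12}+s}y_1^{n_1+m_1-s},
\]
which is exactly the claimed summand; summing over $0\le s\le\min\{n_1,m_2\}$ yields the formula.

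I expect no conceptual obstacle here: Lemma \ref{lem:x1-x2} already absorbs the genuinely combinatorial $q$-binomial bookkeeping (the sum over $s$ together with the coefficients $L_{(n_1,s,m_2)}$), so what remains is the purely mechanical reordering governed by two scalar commutation rules. The one point that demands care is the tally of the exponent of $qq_{12}$: I would verify that crossing $y_{12}^{n_{12}}$ past $y_2^{m_2-s}$ contributes $n_{12}(m_2-s)$, that crossing $y_1^{n_1-s}$ past $y_{12}^{m_{12}}$ contributes $(n_1-s)m_{12}$, and that the already-ordered blocks $y_2^{n_2}$, $y_{12}^{s}$ and $y_1^{m_1}$ introduce no further scalar. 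This bookkeeping is routine but error-prone, and is the only place where a sign or exponent could slip.
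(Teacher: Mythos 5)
Your proposal is correct and is essentially the paper's own proof: the paper also deduces the formula from Lemma \ref{lem:x1-x2} together with the commutation rules $y_{12}y_2=qq_{12}y_2y_{12}$ and $y_1y_{12}=qq_{12}y_{12}y_1$ (which, as you note, are exactly the quantum Serre relations $y_{112}=y_{122}=0$ in force in this section). Your write-up simply makes explicit the exponent bookkeeping that the paper leaves to the reader.
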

\pf
Follows by Lemma \ref{lem:x1-x2}; recall $y_{12}y_2=qq_{12}y_2y_{12}$ and $y_1y_{12}=qq_{12}y_{12}y_1$.
\epf

\begin{corollary}\label{cor:eps0}
Fix $n=(n_2,n_{12},n_1)$, $m=(m_2,m_{12},m_1)\in\I_{N-1}^{\circ \, 3}$ and set  $a\coloneqq n_2+m_2$, $b\coloneqq n_{12}+m_{12}$, $c\coloneqq n_1+m_1$.
Let $y_n=y_2^{n_2}y_{12}^{n_{12}}y_1^{n_1}$, $y_m=y_2^{m_2}y_{12}^{m_{12}}y_1^{m_1}\in\mE_{\bs\lambda}$.

Then $\eps_0(y_ny_m)\neq 0$ only if one of the following constraints $\mathfrak{c}_1, \dots, \mathfrak{c}_5$  holds:
\begin{itemize}
\item[$\mathfrak{c}_1)$] $a,b,c\in\{0,N\}$,
\item[$\mathfrak{c}_2)$] 	$n_2=m_1=0, n_1=m_2>0, b+n_1\in \{N,2N\}$,
\item[$\mathfrak{c}_3)$] 	$m_1=0, 0<n_1<m_2, a=N+n_1, b+n_1\in\{N,2N\}$,
\item[$\mathfrak{c}_4)$] $n_2=0,0<m_2<n_1, c=N+m_2, b+m_2\in\{N,2N\}$,
\item[$\mathfrak{c}_5)$] 	$a=c>N,a+b=2N,3N$.
\end{itemize}
The corresponding value is given by the table:
\begin{center}
\begin{tabular}{|c|c|}
	\hline
	\text{Constraint} & $\eps_0(y_{2}^{n_2}y_{12}^{n_{12}}y_{1}^{n_1}\cdot y_{2}^{m_2}y_{12}^{m_{12}}y_{1}^{m_1})$  \\
	\hline
$\mathfrak{c}_1$
			& $q_{12}^{n_1m_2}(qq_{12})^{n_{12}m_2+n_1m_{12}}\lambda_2^{a/N}\lambda_{12}^{b/N}\lambda_1^{c/N}$ \\
			\hline
$\mathfrak{c}_2$				& 
				$(n_1)!_qq_{12}^{\tfrac{n_1^2-n_1}{2}}\lambda_{12}^{{(b+n_1)}/{N}}$ \\
				\hline
$\mathfrak{c}_3$ & $\tfrac{(m_2)!_q}{(m_2-n_1)!_q}q_{12}^{\tfrac{2n_1m_2-(n_1^2+n_1)}{2}}(qq_{12})
				^{n_{12}(m_2-n_1)}\lambda_2\lambda_{12}^{{(b+n_1)}/{N}}$ \\
				\hline
				$\mathfrak{c}_4$ & $\tfrac{(n_1)!_q}{(n_1-m_2)!_q}q_{12}^{\tfrac{2n_1m_2-(m_2^2+m_2)}{2}}(qq_{12})
				^{m_{12}(n_1-m_2)}\lambda_{12}^{{(b+m_2)}/{N}}\lambda_1$ 
				\\
				\hline
			$\mathfrak{c}_5$ & 
				$L(n_1,a-N,m_1)(qq_{12})^{n_2n_{12}+m_{12}m_1}\lambda_2\lambda_{12}^{{(a+b-N)}/{N}}\lambda_1$
				\\\hline
			\end{tabular}
\end{center}
\end{corollary}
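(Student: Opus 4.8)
The plan is to feed the explicit product of Proposition \ref{pro:product} into $\eps_0$ and identify which single summand survives. The key preliminary observation is that $\eps_0$ detects exactly the ``fully degenerate'' part of a PBW-ordered monomial: since each relation $y_i^N=\lambda_i$ in \eqref{eqn:rels_cleft} is a central scalar, one has $y_2^{A}y_{12}^{B}y_1^{C}=\lambda_2^{\lfloor A/N\rfloor}\lambda_{12}^{\lfloor B/N\rfloor}\lambda_1^{\lfloor C/N\rfloor}\,y_2^{r_A}y_{12}^{r_B}y_1^{r_C}$, where $r_A,r_B,r_C\in\I_{N-1}^\circ$ are the residues of $A,B,C$ modulo $N$. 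The reduced monomial lies in the basis $\EE$, and $\eps_0\ell=\eps_\Bq$ forces $\eps_0(y_2^{r_A}y_{12}^{r_B}y_1^{r_C})=\eps_\Bq(x_2^{r_A}x_{12}^{r_B}x_1^{r_C})$, which vanishes unless $r_A=r_B=r_C=0$. Hence
\[
\eps_0\big(y_2^{A}y_{12}^{B}y_1^{C}\big)=\begin{cases}\lambda_2^{A/N}\lambda_{12}^{B/N}\lambda_1^{C/N},& N\mid A,\ N\mid B,\ N\mid C,\\ 0,&\text{otherwise.}\end{cases}
\]

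In the notation of Proposition \ref{pro:product}, the $s$-th summand of $y_ny_m$ is supported on $y_2^{a-s}y_{12}^{b+s}y_1^{c-s}$. First I would record, from $n_\bullet,m_\bullet\le N-1$, the bounds $0\le a-s<2N$, $0\le c-s<2N$ and $0\le b+s<3N$, so that divisibility by $N$ pins these exponents to $a-s,\,c-s\in\{0,N\}$ and $b+s\in\{0,N,2N\}$; these give precisely the factors $\lambda_2^{(a-s)/N}$, $\lambda_{12}^{(b+s)/N}$, $\lambda_1^{(c-s)/N}$ of the table. The three congruences $s\equiv a\equiv c$ and $s\equiv -b\pmod N$ are simultaneously solvable only if $a\equiv c$ and $a+b\equiv 0\pmod N$, and since $s$ ranges over $[0,\min\{n_1,m_2\}]\subseteq[0,N-1]$ there is at most one admissible value $s_0\equiv a\pmod N$. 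Thus $\eps_0(y_ny_m)$ is either $0$ or a single term of Proposition \ref{pro:product} evaluated at $s=s_0$.

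It then remains to run the case analysis on $(a-s_0,c-s_0)\in\{0,N\}^2$, translating the surviving-index condition $s_0\le\min\{n_1,m_2\}$ into explicit relations among the exponents. When $s_0=0$ one gets $a,b,c\in\{0,N\}$, which is $\mathfrak{c}_1$. When $s_0>0$, writing $a-s_0=n_2+m_2-s_0$ and $c-s_0=n_1+m_1-s_0$ and imposing $s_0\le n_1$, $s_0\le m_2$ forces, in the four cases $(a-s_0,c-s_0)=(0,0),(N,0),(0,N),(N,N)$, the vanishings and equalities recorded in $\mathfrak{c}_2,\mathfrak{c}_3,\mathfrak{c}_4,\mathfrak{c}_5$; for instance $a-s_0=c-s_0=0$ with $s_0\le m_2$ forces $n_2=0$ and $s_0=m_2$, while $c-s_0=0$ forces $m_1=0$ and $n_1=m_2$, giving $\mathfrak{c}_2$. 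In each case I would substitute $s_0$ into $L_{(n_1,s_0,m_2)}$ and into $(q_{12}q)^{n_{12}(m_2-s_0)+(n_1-s_0)m_{12}}$ and simplify, using $\binom{m_2}{s_0}_q(s_0)!_q=(m_2)!_q/(m_2-s_0)!_q$ and analogous $q$-factorial reductions, to recover the entries of the table (the verifications for $\mathfrak{c}_1$–$\mathfrak{c}_4$ are direct).

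The main obstacle is the bookkeeping in $\mathfrak{c}_5$, where $s_0=a-N>0$ and $a=c$, so both $a-s_0=c-s_0=N$ produce two factors of $\lambda$ and the $q$-powers must be rewritten. Here I would use $m_2-s_0=N-n_2$ and $n_1-s_0=N-m_1$ together with the fact that $\lambda_{12}\neq 0$ in \eqref{eqn:restictions-lambda} forces $(\chi_1\chi_2)^N=\eps$, whence $(q_{12}q)^N=(\chi_1\chi_2)^N(g_1)=1$, to reconcile the factor coming from Proposition \ref{pro:product} with the displayed $L(n_1,a-N,m_1)(q_{12}q)^{n_2n_{12}+m_{12}m_1}$. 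Finally I would check that the listed conditions in $\mathfrak{c}_5$ are automatically compatible with $s_0\le\min\{n_1,m_2\}$ (indeed $a=c$ gives $s_0=n_1+m_1-N\le n_1$ and $s_0=n_2+m_2-N\le m_2$), which simultaneously confirms that the five constraints are exhaustive and, by their distinct values of $(s_0,\,a-s_0,\,c-s_0)$, pairwise disjoint.
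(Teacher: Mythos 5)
Your strategy is exactly the paper's (whose proof is only a two-line sketch of this argument): expand the product via Proposition \ref{pro:product}, note that $\eps_0$ kills a PBW monomial $y_2^Ay_{12}^By_1^C$ unless $N$ divides $A,B,C$, and case-analyze the unique surviving index $s_0$. Your justification of the vanishing criterion, the uniqueness of $s_0$, and the derivation of the constraints and values in $\mathfrak{c}_1$--$\mathfrak{c}_4$ are correct and substantially more complete than what the paper records.

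The gap is the final step for $\mathfrak{c}_5$, which you assert but do not carry out, and which in fact fails as sketched. The surviving term of Proposition \ref{pro:product} is $L_{(n_1,s_0,m_2)}(qq_{12})^{n_{12}(m_2-s_0)+(n_1-s_0)m_{12}}\lambda_2\lambda_{12}^{(b+s_0)/N}\lambda_1$ with $s_0=a-N$. Using $m_2-s_0=N-n_2$, $n_1-s_0=N-m_1$ and $(qq_{12})^N=1$, the exponent reduces to $-(n_2n_{12}+m_1m_{12})$, i.e.\ the \emph{inverse} of the displayed power $(qq_{12})^{n_2n_{12}+m_{12}m_1}$; these coincide only when $(qq_{12})^{2(n_2n_{12}+m_1m_{12})}=1$, which is not automatic. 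Moreover, you never address the change in the third argument of $L$: by the definition preceding Lemma \ref{lem:x1-x2}, $L_{(n_1,s_0,m_2)}$ and $L_{(n_1,s_0,m_1)}$ differ by the factor $\binom{m_2}{s_0}_q\binom{m_1}{s_0}_q^{-1}q_{12}^{(n_1-n_2)n_1}$ (using $a=c$), which is not $1$ in general. Concretely, take $N=4$, $q=i$, $q_{12}=1$ (so $q_{21}=-i$, and all of $\lambda_1,\lambda_2,\lambda_{12}$ may be nonzero for a suitable realization), $n=(2,n_{12},3)$, $m=(3,m_{12},2)$ with $n_{12}+m_{12}=3$: then $s_0=1$, the surviving term of Proposition \ref{pro:product} evaluates to $\lambda_2\lambda_{12}\lambda_1$, while the displayed $\mathfrak{c}_5$ entry evaluates to $(1-i)\lambda_2\lambda_{12}\lambda_1$. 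So either additional identities (which you have not supplied, and which this example shows cannot exist in the stated generality) are needed, or the discrepancy lies in the table entry itself; in either case your argument, as written, does not close the $\mathfrak{c}_5$ case of the statement being proved.
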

\pf
We see from Proposition \ref{pro:product} that $\eps_0(y_2^{n_2}y_{12}^{n_{12}}y_1^{n_1}\cdot y_2^{m_2}y_{12}^{m_{12}}y_1^{m_1})\neq 0$  only if $n_{12}+m_{12}+s\in\{0,N,2N\}$ and $n_i+m_i-s\in\{0,N\}$, $i\in\I_2$. The statement follows by analyzing each possible case.
\epf
%

\subsection{The cocycle}
By Theorem \ref{thm:H12} and Proposition \ref{pro:product}, we obtain the description of the 2-cocycle. Recall the definition 
of $\gamma_{\bs\lambda}\colon\Bq_\bq\to \mathcal{E}_{\bs\lambda}$ from Corollary \ref{cor:gamma-formula-QS-no-deformed}.

\begin{theorem}\label{thm:cocycle-generic}
Let $A$	be a lifting of $\Bq_{\qb}$ over a semisimple Hopf algebra $H$. Assume that the quantum Serre relations hold on $A$.
Then there exists $\bs\lambda=(\lambda_1,\lambda_2,\lambda_{12})\in\k^3$ such that $A\simeq (\Bq_{\qb}\#H)_{\sigma}$, with $\sigma=\sigma_{\bs\lambda}\#\eps$ and 
\begin{align}\label{eqn:cocycle-formula}
\sigma_{\bs\lambda}(x_2^{n_2}x_{12}^{n_{12}}x_1^{n_1},x_2^{m_2}x_{12}^{m_{12}}x_1^{m_1})
=\eps_0(\gamma_{\bs\lambda}(x_2^{n_2}x_{12}^{n_{12}}x_1^{n_1})\gamma_{\bs\lambda}(x_2^{m_2}x_{12}^{m_{12}}x_1^{m_1})).
\end{align}

In particular, the non-zero values of $\sigma_{\bs\lambda}$ on the first line are
\begin{center}
\begin{tabular}{|c|c|c|c|c|}
	\hline
	$\sigma$ & $x_2^{N-1}$ & $x_1^{N-1}$ & $x_2x_{12}^{N-1}$ & $x_2^{m-1}x_{12}^{N-m}x_1^{m}, \quad m\in \I_{N-1}$ 
	\\
	\cline{1-5}
	$x_2$ & $\lambda_2$ & $0$ & $0$ & $-(1-q^{N-1})^{N-m}q_{21}^{{(N+m-1)(N-m)}/{2}}\lambda_1\lambda_2$
	\\\cline{1-5}
	$x_1$ & $0$ & $\lambda_1$ & $\lambda_{12}$ & $0$
	\\
	\hline
\end{tabular}
\end{center}\qed
\end{theorem}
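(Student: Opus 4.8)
The first assertion---that $A\simeq(\Bq_{\qb}\#H)_\sigma$ with $\sigma=\sigma_{\bs\lambda}\#\eps$---follows from the general machinery already assembled. By the discussion in \S\ref{sec:liftA2}, since $A$ is a lifting of $\Bq_{\qb}\#H$ on which the quantum Serre relations hold, Lemma \ref{lem:AAG} gives that $\gr A\simeq\Bq_{\qb}\#H$ with $\lambda_{112}=\lambda_{122}=0$, so there is a cleft object $\mE_{\bs\lambda}$ with $\bs\lambda=(\lambda_1,\lambda_2,\lambda_{12})$ and $A\simeq L(E,B)$ for $E=\mE_{\bs\lambda}\#H$. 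The factorization $\sigma=\sigma_{\Bq}\#\eps$ and the reduction to the braided setting are exactly the facts recalled in the Introduction, so it suffices to produce the braided cocycle $\sigma_{\bs\lambda}\in Z^2(\Bq_{\qb})^H$. Here I would invoke Theorem \ref{thm:H12} together with Lemma \ref{lem:B-verifies-H1-H2}, which establish \textbf{H1} and \textbf{H2}, yielding the section $\gamma_{\bs\lambda}$ of Corollary \ref{cor:gamma-formula-QS-no-deformed} and the formula $\sigma_{\bs\lambda}=\eps_0(\gamma_{\bs\lambda}\ot\gamma_{\bs\lambda})$, which is precisely \eqref{eqn:cocycle-formula}.

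The substantive part is the explicit table of values on the first line. The plan is to evaluate \eqref{eqn:cocycle-formula} on the relevant pairs by combining two ingredients: the piecewise description of $\gamma_{\bs\lambda}$ from Corollary \ref{cor:gamma-formula-QS-no-deformed}, and the product-and-$\eps_0$ computation of Proposition \ref{pro:product} and Corollary \ref{cor:eps0}. For the entries with a single generator $x_2$ or $x_1$ in the second argument, the first argument $x_2^{n_2}x_{12}^{n_{12}}x_1^{n_1}$ has $n_{12}+n_1\leq N-1<N$ in every column except the last, so $\gamma_{\bs\lambda}$ acts as the naive lift $y_2^{n_2}y_{12}^{n_{12}}y_1^{n_1}$; the value is then read off from the appropriate constraint $\mathfrak{c}_i$ of Corollary \ref{cor:eps0}. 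Concretely: $\sigma(x_2,x_2^{N-1})=\lambda_2$ and $\sigma(x_1,x_1^{N-1})=\lambda_1$ come from constraint $\mathfrak{c}_1$ with $a=N$ (resp.\ $c=N$); $\sigma(x_1,x_2x_{12}^{N-1})=\lambda_{12}$ comes from $\mathfrak{c}_1$ with $b=N$; and the vanishing entries follow because the resulting exponent sums cannot satisfy any $\mathfrak{c}_i$.

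The one genuinely delicate column is the last, $\sigma(x_2,\,x_2^{m-1}x_{12}^{N-m}x_1^m)$ for $m\in\I_{N-1}$. The plan is to track the correction term in $\gamma_{\bs\lambda}$: since the second argument has $n_{12}+n_1=(N-m)+m=N\geq N$, the piecewise formula contributes the $\lambda_1$-correction $-\lambda_1\sum C_{(k,m)}\,y_2^{\cdots}y_{12}^{\cdots}y_1^{\cdots}$, and after left-multiplying by $\gamma_{\bs\lambda}(x_2)=y_2$ one applies $\eps_0$ via Proposition \ref{pro:product}. \textbf{The main obstacle} is the bookkeeping: I expect the naive lift of the second argument to contribute nothing to $\eps_0$ (no $\mathfrak{c}_i$ is met), so the entire value arises from the correction term, and one must extract the single surviving summand---the one forcing $a=c=N$, $b=N$ so that constraint $\mathfrak{c}_5$ (or $\mathfrak{c}_2$/$\mathfrak{c}_4$) applies---then simplify the product $C_{(k,m)}\cdot\bigl(\text{value from Cor.\ \ref{cor:eps0}}\bigr)$. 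Collecting the powers of $(1-q^{-1})$ and $q_{21}$ from $Q_{(k,m)}$ and $L_{(\cdot)}$, and using $q^N=1$ to rewrite $1-q^{-1}=1-q^{N-1}$, should yield the stated coefficient $-(1-q^{N-1})^{N-m}q_{21}^{(N+m-1)(N-m)/2}\lambda_1\lambda_2$. I would verify the exponent arithmetic against the cross-check provided by the \cite{GAP} scripts.
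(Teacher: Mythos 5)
Your proposal is correct and follows essentially the same route as the paper: the isomorphism $A\simeq(\Bq_{\qb}\#H)_{\sigma}$ and formula \eqref{eqn:cocycle-formula} are obtained from Lemma~\ref{lem:AAG} together with Lemma~\ref{lem:B-verifies-H1-H2} and Theorem~\ref{thm:H12} (via the section of Corollary~\ref{cor:gamma-formula-QS-no-deformed}), and the table is then evaluated entry by entry by combining Corollary~\ref{cor:gamma-formula-QS-no-deformed} with Proposition~\ref{pro:product} and Corollary~\ref{cor:eps0}, which is precisely how the paper (together with its \texttt{GAP} script) proceeds. Two detail-level attributions would need fixing when you carry this out: the entry $\sigma_{\bs\lambda}(x_1,x_2x_{12}^{N-1})=\lambda_{12}$ falls under constraint $\mathfrak{c}_2$ rather than $\mathfrak{c}_1$ (here $a=c=1$, $b=N-1$), and in the last column the unique surviving correction summand is $-\lambda_1C_{(N-m,m)}\,y_2^{N-1}$, so left multiplication by $y_2$ lands in $\mathfrak{c}_1$ with $(a,b,c)=(N,0,0)$ --- not in $\mathfrak{c}_5$ with $a=c=N$, $b=N$ --- after which $C_{(N-m,m)}$ simplifies to the stated coefficient exactly as you describe (using $q_{21}^{N}=1$, which holds whenever $\lambda_1\lambda_2\neq 0$).
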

A full list of values can be found using \texttt{GAP}, by means of our code 
\href{https://github.com/JoseIgnacio25/Hopf-cocycles-of-Cartan-type-A2/blob/main/generic.g}{generic.g}. 
To do so, choose the order $N=\ord(q)\in\N$, run the script, and for a fixed 6-tuple $(n_2,n_{12},n_1,m_2,m_{12},m_1)=(a,b,c,d,e,f)\in \I_{N-1}^{\circ\,\times6}$  the value \eqref{eqn:cocycle-formula} can be computed with the following command:
\begin{lstlisting}
gap> tigma(a,b,c,d,e,f);
\end{lstlisting}

In this file, we uploaded the computations leading to these values, namely the scalar $\eps_0(\gamma_{\bs\lambda}(x_2^{n_2}x_{12}^{n_{12}}x_1^{n_1})\gamma_{\bs\lambda}(x_2^{m_2}x_{12}^{m_{12}}x_1^{m_1}))$. We obtain it using the definition of $\gamma_{\bs\lambda}$ in Corollary \ref{cor:gamma-formula-QS-no-deformed}, the  formula in Proposition \ref{pro:product}, and the identities in Corollary \ref{cor:eps0}.

		\section{Hopf algebras of Cartan type $A_2$: atypical liftings}\label{sec:N3}
		
		In this section we consider the atypical case, namely when the quantum Serre relations are deformed. In particular, $q\in\G_3'$ and the braiding matrix is $\qb= \begin{psmallmatrix}
			q&q\\q&q
		\end{psmallmatrix}$. Again, we fix a YD-realization $(g_i,\chi_i)_{i\in\I_2}$ of $\qb$ over a semisimple Hopf algebra $H$.
		In the setting \eqref{eqn:coaction-of-E}, we fix $\hat{\Bq}=T(V)$. We compute the Hopf 2-cocycles in \S\ref{sec:hopf3}. We denote by $U_H(\Bq_\qb^\ast)$ the group of convolution units of $\Bq_\qb$ that are $H$-linear.
		
In \S\ref{sec:expo}, we determine which 2-cocycles are pure, that is, not cohomologous to an exponential $e^\eta$ of a Hochschild 2-cocycle $\eta\in \Z^2(\Bq_\bq,\k)^H$. 
%

\subsection{The Hopf cocycles}\label{sec:hopf3}
		
%

We first check that  Hypotheses \textbf{H1} and \textbf{H2} hold.
		\begin{lemma}\label{lem:Hypotheses-deformed-cartan-A2}
		The Hypotheses \textbf{H1} and \textbf{H2} are verified.
		\end{lemma}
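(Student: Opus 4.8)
The plan is to mirror the proof of Lemma \ref{lem:B-verifies-H1-H2}, reducing \eqref{eqn:H1} and \eqref{eqn:H2} to an analysis of the two legs of $\underline{\Delta}(\tb)$ as $b$ ranges over the finite basis $\BB_\qb$ (here $n_2,n_{12},n_1\in\{0,1,2\}$, since $N=3$). The new feature, compared with the generic case, is that we must now take $\hat{\Bq}=T(V)$: the distinguished pre-Nichols algebra $\tilde{\Bq}$ is unavailable, because $y_{112}=\lambda_{112}$ and $y_{122}=\lambda_{122}$ need not vanish in $\mE_{\bs\lambda}$. As a consequence, the coproduct formula \eqref{eqn:coproduct-preNichols} is no longer valid verbatim: its derivation rests on the braided identity $X_1Z_2X_{12}=qX_{12}X_1Z_2$, which in $T(V)$ fails by the term $q^2x_{112}\ot x_2$ (it is rescued in $\tilde{\Bq}$ by $x_{112}=0$, using $q^3=1$). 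So the first task is to compute the genuine coproduct $\underline{\Delta}(\tb)$ in $T(V)$, recording the extra quantum-Serre contributions to the legs. Here $q\in\G_3'$ is very favourable: $\binom{3}{k}_q=0$ for $0<k<3$ makes $x_1^3$ and $x_2^3$ primitive, and the degree-$3$ elements $x_{112},x_{122}$ keep all corrections of low degree.

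For \eqref{eqn:H1} I would argue leg by leg, as in the derivation of \eqref{eqn:ast}. By \eqref{eqn:coaction-of-E}, for any leg $w$ one has $(\eps_0\ot\ell)\rho\,\pi_\mE\tau_2(w)=(\eps_0\pi_\mE\ot\ell\pi_\Bq)\Delta(\tau_2 w)$, so it suffices to check that this returns $\pi_\mE\tau_2(w)$. The key point is that $\pi_\mE\tau_2(w)=\pi_\mE(w)-\ell\pi_\Bq(w)$ is nonzero only when the normal form of $w$ triggers a defining relation, in which case $\pi_\mE\tau_2(w)$ is a scalar (one of the $\lambda$'s, possibly a product) times a PBW monomial which is small enough---in the sense of \eqref{eqn:ast}---that the first-leg defect of its own coproduct vanishes, and the identity follows. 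The contributions from $x_1^3,x_2^3$ are handled by their primitivity together with the fact that $\eps_0$ annihilates every basis monomial of positive degree; the genuinely new contributions, triggered by $x_{112}$ and $x_{122}$, reduce to scalars and are handled in the same way.

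For \eqref{eqn:H2} I would use a support/degree argument. Writing $\underline{\Delta}(\tb)=\sum_i w_i\ot w_i'$, the left-hand side of \eqref{eqn:H2} equals $\sum_i \eps_0\pi_\mE\tau_2(w_i)\,(\pi_\mE\tau_2\ot\pi_\Bq)\underline{\Delta}\tau_1(w_i')$, so only legs carrying a scalar defect $\eps_0\pi_\mE\tau_2(w_i)\neq0$ survive. Such a $w_i$ must reduce entirely to a scalar, hence it is a full relation ($x_1^3$, $x_2^3$, $x_{112}$, $x_{122}$, or $x_{12}^3$) of degree $\geq 3$; this forces the partner $w_i'$ to be small enough that $\tau_1(w_i')$ is a single PBW monomial whose own coproduct has no first-sub-leg defect, i.e. $(\pi_\mE\tau_2\ot\pi_\Bq)\underline{\Delta}\tau_1(w_i')=0$, exactly as in the final line of Lemma \ref{lem:B-verifies-H1-H2}. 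Where possible I would instead establish the cleaner vanishing $[\pi_\mE\tau_2\ot((\pi_\mE\tau_2\ot\pi_\Bq)\underline{\Delta}\tau_1)]\underline{\Delta}(\tb)=0$, as done there.

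The main obstacle is the first step: controlling the quantum-Serre corrections to the coproduct in $T(V)$ and checking that they neither spoil the \eqref{eqn:ast}-type identities needed for \eqref{eqn:H1} nor produce an unexpected scalar-defect/second-leg pairing in \eqref{eqn:H2}. Since $N=3$ bounds everything (the set $\BB_\qb$ has $27$ elements and all degrees are $\leq 8$), this is a finite verification, and I would confirm the bookkeeping of the correction terms with the \texttt{GAP} routines already used elsewhere in the paper.
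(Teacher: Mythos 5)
Your proposal is correct and takes essentially the same route as the paper: the paper also works in $\hat{\Bq}=T(V)$, computes the quantum-Serre correction to \eqref{eqn:coproduct-preNichols} (an extra term whose first legs contain $x_{112}$, hence project to the scalar $\lambda_{112}$ in $\mE_{\bs\lambda}$), verifies \eqref{eqn:H1} by exactly the \eqref{eqn:ast}-type identity you describe, and verifies \eqref{eqn:H2} by noting that the only scalar-defect first leg ($x_{112}$, besides $x_1^3$) pairs with a second leg $x_2^{n_2+1}x_1^{n_1}$ whose image under $\underline{\Delta}\tau_1$ has no $\tau_2$-defect. The only organizational difference is that the paper uses the linearity of \eqref{eqn:H1}--\eqref{eqn:H2} together with Lemma \ref{lem:B-verifies-H1-H2} to reduce immediately to the correction term instead of re-running the leg-by-leg analysis over all of $\BB_\qb$, which is the same argument packaged slightly more efficiently.
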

		\pf
As in \S\ref{sec:copro}, we begin by computing the coproduct $\Delta\colon T(V)\to T(V)\ot T(V)$, restricted to the elements $x_2^{n_2}x_{12}^{n_{12}}x_1^{n_1}\in T(V)$, $0\leq n_2,n_{12},n_1\leq2$, {\it cf.}~\eqref{eqn:iota}.
		
Now, when $n_{12}<2$, it is straightforward that formula \eqref{eqn:coproduct-preNichols} holds. In turn, if $n_{12}=2$, we get that $\Delta(x_2^{n_2}x_{12}^{2}x_1^{n_1})$ is given by expression \eqref{eqn:coproduct-preNichols} plus an extra term
\[
A\coloneqq \sum
\limits_{\mathclap{\substack{0\leq j\leq n_2\\0\leq m\leq n_1}}}
A(j,m)\,x_2^jx_{112}x_1^m\ot x_2^{n_2-j+1}x_1^{n_1-m},
\]
for $A(j,m)=(1-q^2)q_{12}^{-1}\binom{n_2}{j}_q\binom{n_1}{m}_qq^{n_2-j}q_{21}^{(n_2-j)(m+2)+m}$.
Since Lemma \ref{lem:B-verifies-H1-H2} shows that the (linear) identities \textbf{H1} and \textbf{H2} hold for \eqref{eqn:coproduct-preNichols}. Hence in this context these hypotheses become equivalent, respectively, to 
		\begin{align}\label{eqn:H1-for-N-3}
	((\eps_0\ot\ell)\rho\ot \id)(\pi_{\mE}\tau_2\ot \pi_{\Bq})(A)&=(\pi_{\mE}\tau_2\ot \pi_{\Bq})(A);\\
	\label{eqn:H2-for-N-3}
	[\eps_0\pi_{\mE}\tau_2\ot ((\pi_{\mE}\tau_2\ot \pi_{\Bq})\underline{\Delta}\tau_1)](A)&=0.
\end{align}

Identity \eqref{eqn:H1-for-N-3} holds, as both sides coincide with the expression:
		\begin{align*}
			\lambda_{112}\sum\limits_{\mathclap{\substack{n_2-1\leq j\leq n_2\\0\leq m\leq n_1}}} \ A(j,m)\,y_2^jy_1^m\ot x_2^{n_2-j+1}x_1^{n_1-m}.
		\end{align*}
		Now \eqref{eqn:H2-for-N-3} is immediate from the fact that $(\eps_0\pi_{\mE}\tau_2\ot\id)(A)=A(0,0)\lambda_{112}\,1_{\mE}\ot x_2^{n_2+1}x_1^{n_1}$ and  $((\pi_{\mE}\tau_2\ot \pi_{\Bq})\underline{\Delta})(\tau_2(x_2^{n_2+1}x_1^{n_1}))=0$.		
		Thus the lemma follows.
		\epf

Now we can define a section using \eqref{eqn:general-section}. 

		\begin{corollary}\label{cor:section-QS-deformed}
			The linear map $\gamma_{\bs\lambda}\colon\Bq_\bq\to \mathcal{E}_{\bs\lambda}$ given in the basis $\mathbb{B}_\bq$ for 
			\begin{align}\label{eqn:gamma3}
				\gamma_{\bs\lambda}(x_2^{n_2}x_{12}^{n_{12}}x_1^{n_1})&= y_2^{n_2}y_{12}^{n_{12}}y_1^{n_1}-(\eps_0\ot\ell)(\pi_{\mE}\tau_2\ot \pi_{\Bq})\underline{\Delta}(x_2^{n_2}x_{12}^{n_{12}}x_1^{n_1})
			\end{align}
			is a section in $H\mod$ so that $\eps_0\gamma_{\bs\lambda}=\eps$.
			Explicitly, for $\gamma=\gamma_{\bs\lambda}$:
			\begin{align*}
				&\gamma(x_{12}^2)= y_{12}^2-(q^2-q)\lambda_{112}y_2, \quad 
				\gamma(x_{12}^2x_1)= y_{12}^2y_1+3q^2\lambda_1y_2^2-(q^2-q)\lambda_{112}y_2y_1,\\
				&\gamma(x_{12}x_1^2)= y_{12}y_1^2-(q^2-q)\lambda_1y_2, \quad 
				\gamma(x_2x_{12}^2)= y_2y_{12}^2-(q^2-q)\lambda_{112}y_2^2,\\
								&\gamma(x_2x_{12}x_1^2)= y_2y_{12}y_1^2-(q^2-q)\lambda_1y_2^2,\quad \gamma(x_2x_{12}^2x_1)= y_2y_{12}^2y_1-(q^2-q)\lambda_{112}y_2^2y_1,\\
				&\gamma(x_{12}^2x_1^2)= y_{12}^2y_1^2-3q\lambda_1y_2^2y_1-(q-q^2)\lambda_1y_2y_{12}-(q^2-q)\lambda_{112}y_2y_1^2,\\
				&\gamma(x_2x_{12}^2x_1^2)= y_2y_{12}^2y_1^2-(q-q^2)\lambda_1y_2^2y_{12}-(q^2-q)\lambda_{112}y_2^2y_1^2,\\
				&\gamma(x_2^{n_2}x_{12}^{n_{12}}x_1^{n_1})=y_2^{n_2}y_{12}^{n_{12}}y_1^{n_1}, \text{ in all other cases.}
			\end{align*}
		\end{corollary}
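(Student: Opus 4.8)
The plan is to apply Theorem \ref{thm:H12} directly: Lemma \ref{lem:Hypotheses-deformed-cartan-A2} establishes that \textbf{H1} and \textbf{H2} hold in the present setting with $\hat{\Bq}=T(V)$, so the general section $\omega$ from \eqref{eqn:general-section} is a convolution invertible comodule map, and \eqref{eqn:gamma3} is simply this $\omega$ specialized to the distinguished basis $\mathbb{B}_\bq$. The identity $\eps_0\gamma_{\bs\lambda}=\eps$ is immediate from the form of \eqref{eqn:general-section}, since applying $\eps_0$ kills the correction term $(\eps_0\ot\ell)(\pi_{\mE}\tau_2\ot \pi_{\Bq})\underline{\Delta}(\tb)$ (its image lies in the augmentation ideal of $\mE$) and returns $\eps_0\ell(b)=\eps_{\Bq_\bq}(b)$ on the leading term. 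Thus the only genuinely new content is (i) verifying that $\gamma_{\bs\lambda}$ is $H$-linear, and (ii) extracting the explicit closed-form values displayed in the statement.

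For the $H$-linearity, I would argue as in Corollary \ref{cor:gamma-formula-QS-no-deformed}: the leading term $y_2^{n_2}y_{12}^{n_{12}}y_1^{n_1}$ carries the character $\chi_2^{n_2}(\chi_1\chi_2)^{n_{12}}\chi_1^{n_1}$, and each correction monomial appearing in the expansion of \eqref{eqn:gamma3} is homogeneous of the same $\Z^2$-degree (equivalently, carries the same $H$-character) as the leading term, precisely because $\underline{\Delta}$ is a map of Yetter-Drinfeld modules and the defining relations \eqref{eqn:restictions-lambda} force the relevant $\lambda$-parameters to vanish unless the corresponding character identity holds. Concretely, each correction term is a scalar multiple of some $\lambda_p$ times a monomial whose character matches the source; when $\lambda_p\neq 0$ the constraint in \eqref{eqn:restictions-lambda} guarantees the degrees agree, so $\gamma_{\bs\lambda}\in H\mod$.

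The main computational step is (ii): producing the explicit values. Here I would use the coproduct computed inside Lemma \ref{lem:Hypotheses-deformed-cartan-A2}---namely formula \eqref{eqn:coproduct-preNichols} together with the extra term $A$ arising when $n_{12}=2$---to evaluate $(\eps_0\ot\ell)(\pi_{\mE}\tau_2\ot \pi_{\Bq})\underline{\Delta}(\tb)$ for each of the finitely many basis elements $x_2^{n_2}x_{12}^{n_{12}}x_1^{n_1}$ with $0\le n_2,n_{12},n_1\le 2$. The map $\pi_{\mE}\tau_2$ picks out precisely the components of $\underline{\Delta}(\tb)$ whose left tensor factor lies in $\mathfrak{J}$ (i.e.\ is not in the chosen PBW basis of $\Bq_\bq$), which in the $T(V)$ setting with $N=3$ means monomials involving $x_i^3$, $x_{12}^3$, or $x_{112},x_{122}$; applying $\eps_0\pi_{\mE}$ then replaces these by the scalars $\lambda_1,\lambda_2,\lambda_{12},\lambda_{112},\lambda_{122}$ according to \eqref{eqn:rels_cleft}. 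The surviving correction terms are exactly those tabulated, and the scalar coefficients $(q^2-q)$, $3q^2$, $3q$, etc.\ come from the structure constants $C_{(j,k,l,m)}$ and $A(j,m)$ evaluated at $q\in\G_3'$.

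The step I expect to be most delicate is bookkeeping the extra term $A$ and its interaction with the quantum Serre deformation: unlike the generic case, $\underline{\Delta}(\tb)$ now contains $x_{112}$- and $x_{122}$-type contributions that survive under $\eps_0\pi_{\mE}\tau_2$ and produce the $\lambda_{112}$, $\lambda_{122}$ corrections. Correctly combining these with the $\lambda_1$-corrections (which already appeared generically) to obtain, for instance, the mixed coefficient in $\gamma(x_{12}^2x_1^2)$ requires careful tracking of which monomials reduce into $\mathfrak{J}$ after applying $\pi_{\Bq}$ on the right leg. Since the computation is finite I would verify it by hand on the short list above and cross-check against the \texttt{GAP} scripts referenced in the paper, but the conceptual heart is simply that \eqref{eqn:gamma3} is the specialization of the already-justified universal formula \eqref{eqn:general-section}.
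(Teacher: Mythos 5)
Your proposal is correct and takes essentially the same approach as the paper: its proof likewise invokes Theorem \ref{thm:H12} (whose hypotheses are supplied by Lemma \ref{lem:Hypotheses-deformed-cartan-A2}) to obtain a convolution-invertible comodule map of the form \eqref{eqn:general-section} satisfying $\eps_0\gamma_{\bs\lambda}=\eps_{\Bq_\bq}$, and then checks $H$-linearity directly on the basis. Your character-matching argument via \eqref{eqn:restictions-lambda} and the finite evaluation of the correction term using \eqref{eqn:coproduct-preNichols} together with the extra term $A$ are precisely the computations the paper leaves implicit.
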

		\pf
		Theorem \ref{thm:H12} defines a (convolution invertible) comodule map $\gamma_{\bs\lambda}:\Bq_\bq\to\mE_{\bs\lambda}$ as in \eqref{eqn:general-section} with $\eps_0\gamma_{\bs\lambda}=\eps_{\Bq_\bq}$. We check on the basis that $\gamma_{\bs\lambda}$ is $H$-linear. 
		\epf
We can apply Theorem \ref{thm:H12} to reach our result, which complements Theorem \ref{thm:cocycle-generic}.
\begin{theorem}\label{thm:cocycle-quantum-serre-deformed}
Let $A$	be a lifting of $\Bq_{\qb}$ over a semisimple Hopf algebra $H$, with braiding matrix $\qb=\begin{psmallmatrix}
	q&q\\q&q
\end{psmallmatrix}$, $q\in\mathbb{G}_3'$.
Then there exists $\bs\lambda=(\lambda_1,\lambda_2,\lambda_{12},\lambda_{112},\lambda_{122})\in\k^5$ such that $A\simeq (\Bq_{\qb}\#H)_{\sigma}$, for $\sigma=\sigma_{\bs\lambda}\#\eps$ and formula \eqref{eqn:cocycle-formula} holds, where $\gamma_{\bs\lambda}$ is as in \eqref{eqn:gamma3}.
	The nonzero values {\it in the first rows} are in the table from Example \ref{exa:intro}
\qed
\end{theorem}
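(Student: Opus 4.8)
The plan is to compute the cocycle values directly from formula \eqref{eqn:cocycle-formula}, which Theorem \ref{thm:H12} and Proposition \ref{pro:intro} justify, using the explicit section $\gamma_{\bs\lambda}$ from Corollary \ref{cor:section-QS-deformed} together with the product formula of Proposition \ref{pro:product}. Since Lemma \ref{lem:Hypotheses-deformed-cartan-A2} already establishes that \textbf{H1} and \textbf{H2} hold in this setting with $\hat{\Bq}=T(V)$, the existence of $\bs\lambda\in\k^5$ and the identification $A\simeq (\Bq_{\qb}\#H)_\sigma$ with $\sigma=\sigma_{\bs\lambda}\#\eps$ follow as in the generic case (Theorem \ref{thm:cocycle-generic}); the genuinely new content is the explicit table of first-row values for $N=3$.

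First I would compute $\sigma_{\bs\lambda}(x_i,\,b)=\eps_0(\gamma_{\bs\lambda}(x_i)\gamma_{\bs\lambda}(b))$ for $i\in\I_2$ and each degree-two basis element $b\in\{x_2^2,x_{12},x_1^2\}$, then for each degree-three element $b\in\{x_2x_{12}^2,\ x_2x_{12}x_1^2,\ x_{12}^2x_1\}$ appearing as a column. Since $\gamma_{\bs\lambda}(x_i)=y_i$ (the correction term vanishes in degree one), the left factor is always just $y_i$, and the main work is expanding $\gamma_{\bs\lambda}(b)$ via the explicit formulas in Corollary \ref{cor:section-QS-deformed}, multiplying on the left by $y_i$ using Proposition \ref{pro:product}, and extracting the $\eps_0$-component. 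For instance, $\sigma_{\bs\lambda}(x_2,x_2^2)=\eps_0(y_2\cdot y_2^2)=\eps_0(y_2^3)=\lambda_2$ by \eqref{eqn:rels_cleft}, and $\sigma_{\bs\lambda}(x_1,x_{12})=\eps_0(y_1 y_{12})$ should reduce to $\lambda_{112}$ after reordering, consistent with the table; the degree-three entries require carrying the $\lambda$-linear correction terms of $\gamma_{\bs\lambda}(b)$ through the product.

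For the second assertion, characterizing when $\sigma_{\bs\lambda}$ is an exponential of a Hochschild 2-cocycle, the plan is to appeal to the criterion recorded in \S\ref{sec:cocycles}: $\sigma=\sigma_{\Bq}\#\eps$ is pure iff $\sigma_{\Bq}\in Z^2(\Bq)^H$ is pure, and $\sigma\sim e^\eta$ iff there is $\eta_{\Bq}\in\Z^2(\Bq,\k)^H$ with $\sigma_{\Bq}\sim e^{\eta_{\Bq}}$. Writing a general $H$-linear Hochschild $2$-cocycle $\eta$, forming $e^\eta=\sum_k \tfrac{1}{k!}\eta^{\ast k}$, and comparing its first-row values (in particular the degree-three entries) against the table, I would derive the constraints among $\lambda_1,\lambda_2,\lambda_{12},\lambda_{112},\lambda_{122}$ that must hold for such an $\eta$ to exist. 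The cases split naturally according to whether the quantum Serre parameters $\lambda_{112},\lambda_{122}$ vanish: when $\lambda_{112}=\lambda_{122}=0$ one recovers the generic regime and the exponential condition forces at most one of $\lambda_1,\lambda_2,\lambda_{12}$ nonzero, whereas when $\lambda_{112}\lambda_{122}\neq 0$ the cross-terms in $e^\eta$ pin down $\lambda_1,\lambda_2,\lambda_{12}$ in terms of $\lambda_{112},\lambda_{122}$, yielding the three relations in part (b) of Example \ref{exa:intro}.

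The routine obstacle is the bookkeeping in the degree-three products: one must track the noncommutative reorderings $y_1 y_{12}=qq_{12}\,y_{12}y_1$ and $y_{12}y_2=qq_{12}\,y_2 y_{12}$ together with the straightening in Lemma \ref{lem:x1-x2} while substituting the relations \eqref{eqn:rels_cleft}, and confirm that the $q$-scalars specialize correctly at $q\in\G_3'$ (where $q^3=1$, $1+q+q^2=0$) to match the stated coefficients such as $(q-q^2)$ and $3q^2$. The conceptually harder part is the exponential analysis: I expect the main difficulty to lie in showing that no Hochschild $2$-cocycle $\eta$ can reproduce the degree-three entries \emph{unless} the displayed relations hold, i.e. ruling out all potential choices of $\eta$ rather than merely exhibiting one. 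This is where the precise shape of $e^\eta$ on the relevant bidegrees, and the $H$-linearity restriction \eqref{eqn:restictions-lambda}, do the decisive work, and it is carried out in Proposition \ref{pro:Conclusion}.
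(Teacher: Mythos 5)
Your overall architecture matches the paper's: verify \textbf{H1}--\textbf{H2} (Lemma \ref{lem:Hypotheses-deformed-cartan-A2}), take the explicit section of Corollary \ref{cor:section-QS-deformed}, and read off $\sigma_{\bs\lambda}(a,b)=\eps_0(\gamma_{\bs\lambda}(a)\gamma_{\bs\lambda}(b))$ via Theorem \ref{thm:H12}; the paper then evaluates these products by direct computation in $\mE_{\bs\lambda}$ (implemented in the \texttt{GAP} script), and correctly defers the exponential characterization to Proposition \ref{pro:Conclusion}, as you do.

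However, there is a genuine gap in the computational engine you propose. Lemma \ref{lem:x1-x2} and Proposition \ref{pro:product} are proved in Section \ref{sec:general} \emph{under the standing assumption that the quantum Serre relations hold in $\mE_{\bs\lambda}$}, i.e.\ $y_{112}=y_{122}=0$. In the atypical case these relations are deformed: from \eqref{eqn:rels_cleft} one has
\begin{align*}
y_1y_{12}&=qq_{12}\,y_{12}y_1+\lambda_{112}, & y_{12}y_2&=qq_{12}\,y_2y_{12}+\lambda_{122},
\end{align*}
so the reordering identities you list in your last paragraph ($y_1y_{12}=qq_{12}\,y_{12}y_1$, $y_{12}y_2=qq_{12}\,y_2y_{12}$) are false precisely when $\lambda_{112}$ or $\lambda_{122}$ is nonzero, and Proposition \ref{pro:product} fails with them. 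The failure is not harmless: applied literally, Proposition \ref{pro:product} gives $\eps_0(y_1y_{12})=\eps_0(qq_{12}\,y_{12}y_1)=0$, contradicting the table entry $\sigma_{\bs\lambda}(x_1,x_{12})=\lambda_{112}$ that you yourself (correctly) anticipate "after reordering" --- that reordering silently uses the deformed relation, not the one you cite. The constant terms $\lambda_{112},\lambda_{122}$ arising in the straightening are exactly the source of most of the non-generic entries, e.g.\ $(q-q^2)\lambda_2\lambda_{112}$ and $(q-q^2)\lambda_{112}\lambda_{122}+\lambda_{12}$. To close the gap you must rederive the straightening rules in $\mE_{\bs\lambda}$ with the deformation terms included (equivalently, redo Lemma \ref{lem:x1-x2} and Proposition \ref{pro:product} modulo lower-order terms carrying $\lambda_{112},\lambda_{122}$), which is what the paper's direct computation in \texttt{atypical.g} accomplishes.
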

We compute the complete set of values for $\sigma_{\bs\lambda}$ using \texttt{GAP} as in Section \ref{sec:general}. Moreover, for this particular case, we provide a code \href{https://github.com/JoseIgnacio25/Hopf-cocycles-of-Cartan-type-A2/blob/main/atypical.g}{atypical.g}, which computes the {\it orbit} $\alpha\rightharpoonup\sigma_{\bs\lambda}$ of the 2-cocycle evaluated on each pair of basic elements, for any $\alpha\in U_H(\Bq_\bq^\ast)$. See \S\ref{sec:stepii} for further details.
				
		\subsection{The exponential}\label{sec:expo}
	
	In this part, we investigate which cocycles $\sigma\in\Z^2(\Bq_\bq)^H$ can be realized as an exponential $e^\eta$, for some Hochschild 2-cocycle $\eta\in\Z^2(\Bq_\bq,\k)^H$.
		
	Our strategy is as follows.	The input is $\{\sigma(b,b'):b,b'\in\mathbb{B}_\bq\}$, the collection of values  found in Theorem \ref{thm:cocycle-quantum-serre-deformed}; this is represented via a table $T_\sigma$. 
	
	We carry out the following steps:
		\begin{enumerate}[leftmargin=*]
			\item[(i)] In \S\ref{sec:stepi} we compute the group of $H$-linear convolution units $U_H(\Bq_\bq^\ast)$.
			\item[(ii)] In \S\ref{sec:stepii} we calculate the orbit $[\sigma]=\{(\alpha\rhu\sigma)(b,b'):b,b'\in\mathbb{B}_\bq,\alpha\in U\}$. 
			\item[(iii)] In \S\ref{sec:stepiii} we determine the set $\Z^2_0(\Bq_\bq,\k)^H$ of $H$-invariant Hochschild 2-cocycles.
			\item[(iv)] In \S\ref{sec:stepiv} we compute the set  $\{e^{\eta}(b,b'): b,b'\in\mathbb{B}_\bq\}$, for any $\eta\in\Z^2_0(\Bq_\bq,\k)^H$, noting that this does not distinguish coboundaries.
		\end{enumerate}
		
		The output consists of two tables. The first, $T'_\sigma$, obtained in (ii) (see Theorem~\ref{thm:cocycle-quantum-serre-deformed}), encodes all possible Hopf 2-cocycles on $\Bq_\bq$. The second, $T_\eta$, obtained in (iv) (see Lemma~\ref{lem:first_line-e-eta}), represents all possible exponential maps $\Bq_{\bq}\ot\Bq_{\bq} \to \k$ (not only Hopf cocycles).

Comparing $T'_\sigma$ and $T_\eta$ yields the following result, which we prove in \S\ref{sec:proof}.
		
		
		\begin{proposition}\label{pro:Conclusion}
			Let $\sigma_{\bs\lambda}\in Z^2(\Bq_\bq)^H$ be a Hopf cocycle as in Theorem \ref{thm:cocycle-quantum-serre-deformed}. Then
			$\sigma_{\bs\lambda}$ is an exponential Hopf 2-cocycle if and only if either:
			\begin{enumerate}[leftmargin=*]
				\item[(a)] $\lambda_{112}=\lambda_{122}=0$ and at most a single parameter $\lambda_p\in\{\lambda_1,\lambda_2,\lambda_{12}\}$ is non-zero.
				\item[(b)] $\lambda_{112}\lambda_{122}\neq 0$ and $\lambda_1=\tfrac{1}{3}\tfrac{\lambda_{112}^2}{\lambda_{122}}$, $\lambda_2=\tfrac{1}{3}\tfrac{\lambda_{122}^2}{\lambda_{112}}$, $ \lambda_{12}=\tfrac{1}{3}(q^2-q)\lambda_{112}\lambda_{122}$.
			\end{enumerate} 
		\end{proposition}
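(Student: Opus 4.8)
The plan is to recast ``$\sigma_{\bs\lambda}$ is exponential'' as the nonemptiness of an intersection of two explicit families of scalar tables, and then to solve the resulting finite system case by case in $\bs\lambda$. By definition $\sigma_{\bs\lambda}$ is exponential exactly when it is cohomologous to some $e^\eta$; by the reduction to the braided $H$-invariant setting recalled in \S\ref{sec:cocycles}, it suffices to decide whether $\sigma_{\bs\lambda}\sim e^{\eta}$ for some $\eta\in\Z^2_0(\Bq_\bq,\k)^H$, where $\sim$ is the equivalence of \eqref{eqn:action-on-cocycles} under the group $U=U(\Bq_\bq^\ast)^H$ of invertible $H$-linear maps. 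Equivalently, writing $[\sigma_{\bs\lambda}]=\{\alpha\rhu\sigma_{\bs\lambda}:\alpha\in U\}$ for the cohomology class and $\{e^\eta\}$ for the exponentials of step (iv), the proposition asserts that $[\sigma_{\bs\lambda}]\cap\{e^\eta\}\neq\varnothing$ precisely in cases (a) and (b). I would therefore reduce to solving $\alpha\rhu\sigma_{\bs\lambda}=e^\eta$, as an identity of functions on $\mathbb{B}_\bq\ot\mathbb{B}_\bq$, for the unknowns $\alpha\in U$ and $\eta$.

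To assemble the two tables I would run steps (i)--(iv). In step (i), $H$-linearity supports any $\alpha\in U$ on the PBW monomials whose associated character $\chi_2^{n_2}(\chi_1\chi_2)^{n_{12}}\chi_1^{n_1}$ is trivial, so $U$ is an explicit affine group; its action \eqref{eqn:action-on-cocycles} on $\sigma_{\bs\lambda}$ is computed with the coproduct of Lemma~\ref{lem:coproduct in pre-Nichols} (valid in $\Bq_\bq$ by Remark~\ref{rem:coproduct-in-Bq}), producing the orbit table $T'_\sigma$ of step (ii). In steps (iii)--(iv) the invariant Hochschild cocycles $\Z^2_0(\Bq_\bq,\k)^H$ are cut out by the Hochschild identity together with the same character constraints, and exponentiating $e^\eta=\sum_k\tfrac1{k!}\eta^{\ast k}$ yields $T_\eta$ (Lemma~\ref{lem:first_line-e-eta}). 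A useful simplification is that, since each $x_i$ is primitive and $\eta$ is normalized, all higher convolution powers vanish on a pair $(x_i,b)$, whence $e^\eta(x_i,b)=\eta(x_i,b)$; thus the first rows of $T_\eta$ are exactly the values of the Hochschild cocycle $\eta$, while the quadratic corrections $\tfrac12\eta\ast\eta$ enter only through the deeper rows and are linked back to the first rows via the Hochschild identity.

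The matching $\alpha\rhu\sigma_{\bs\lambda}=e^\eta$ I would organise by the pair $(\lambda_{112},\lambda_{122})$. If $\lambda_{112}=\lambda_{122}=0$, the surviving first-row data are $\lambda_1,\lambda_2,\lambda_{12}$, and the only potentially obstructing entries are the quadratic values $(q-q^2)\lambda_1\lambda_2$ and $3q^2\lambda_1\lambda_2$ of Theorem~\ref{thm:cocycle-quantum-serre-deformed}; expressing these as values of an invariant Hochschild cocycle and checking that no $\alpha\in U$ absorbs the discrepancy forces at most one of $\lambda_1,\lambda_2,\lambda_{12}$ to be nonzero, which is case (a). If $\lambda_{112}\lambda_{122}\neq0$, the first rows identify the candidate values of $\eta$ through $\lambda_{112}=\sigma(x_1,x_{12})$ and $\lambda_{122}=\sigma(x_1,x_2^2)$; imposing equality on the remaining displayed entries $(q-q^2)\lambda_2\lambda_{112}$, $(q-q^2)\lambda_1\lambda_{122}$, $3q^2\lambda_2\lambda_1$, $3q^2\lambda_1\lambda_{122}$ and $(q-q^2)\lambda_{112}\lambda_{122}+\lambda_{12}$, and propagating through the Hochschild identity to the deeper rows, then yields exactly the three relations of case (b) after using the orbit freedom. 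Finally, the mixed regime in which exactly one of $\lambda_{112},\lambda_{122}$ is nonzero is excluded by exhibiting a single entry whose value is a nonzero multiple of the nonvanishing parameter but which every $e^\eta$ forces to vanish.

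I expect the main obstacle to lie in steps (ii)--(iv) rather than in the final case split. Both tables are indexed by the $27\times27$ pairs of PBW monomials in $\mathbb{B}_\bq$, and $T'_\sigma$ carries the full $U$-freedom, so $\alpha\rhu\sigma_{\bs\lambda}=e^\eta$ is a sizeable polynomial system in $\alpha$, $\eta$ and $\bs\lambda$. The genuinely delicate point is not the bulk of the entries---which the \texttt{GAP} scripts verify to match automatically once the constraints above are met---but isolating the few quadratic entries that actually constrain $\bs\lambda$ and confirming that $\alpha$ and $\eta$ can then be chosen to satisfy all remaining entries simultaneously; the asymmetric appearance of $\lambda_{112}$ and $\lambda_{122}$ in the relations of (b) is where this requires the most care.
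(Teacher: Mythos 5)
Your global framework is the same as the paper's: reduce the question to solving $\alpha\rhu\sigma_{\bs\lambda}=e^\eta$ for $\alpha\in U$ and $\eta\in\Z_0^2(\Bq_\bq,\k)^H$ using the tables of steps (i)--(iv), and settle sufficiency with the \texttt{GAP} scripts. Your observation that $e^\eta(x_i,b)=\eta(x_i,b)$ for $x_i$ primitive is also correct (it is why the first rows in Lemma \ref{lem:first_line-e-eta} are linear in $\eta$). The genuine gap is in \emph{where} you locate the obstructions. The first-row entries of $e^\eta$ contain the coboundary values $b_{\ast}=\beta(\cdot,\cdot)$, which are essentially free parameters of $\eta$: for instance $e^\eta(x_2,x_2x_{12}x_1^2)=-b_{221211}$ and $e^\eta(x_2,x_{12}^2x_1)=-b_{212121}$, with $b_{221211},b_{212121}$ unconstrained by Lemmas \ref{lem:coboundaries} and \ref{lem:first_line-e-eta}. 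Hence the two quadratic entries $(q-q^2)\lambda_1\lambda_2$ and $3q^2\lambda_1\lambda_2$, which you single out as ``the only potentially obstructing entries'' in case (a), obstruct nothing by themselves: they are absorbed by choosing $b_{221211}$ and $b_{212121}$. Matching the first rows mostly just determines the $e_j$'s and $b_\ast$'s in terms of $\alpha$ and $\bs\lambda$ --- this is exactly Step 1 of the paper's proof; the only conditions it produces come from the cross-consistency of the $b_\ast$'s (each appears in several entries, mixed with the quadratic $\alpha$-terms $T_{i,j}$), and these do not visibly force anything on $\bs\lambda$ alone. Moreover, even if the first rows did force $\lambda_1\lambda_2=0$, that is strictly weaker than conclusion (a): it does not exclude, say, $\lambda_1\neq0$, $\lambda_{12}\neq0$, $\lambda_2=0$, a configuration whose first rows are entirely linear in the parameters and perfectly matchable.

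All the constraints that actually pin down $\bs\lambda$ come from rows indexed by basis elements of degree at least $2$, which your proposal never examines. In the paper these are the degree-3 comparisons \ref{E1}--\ref{E14}, yielding Claim \ref{Claim:in-degree-3} (the dichotomy $\lambda_1\lambda_2=0$ versus $\lambda_1\lambda_2\neq0$, and in the second case the relations of (b)), followed by the higher-degree comparisons \ref{E15}--\ref{E30}, yielding Claim \ref{Claim:in-degree-9}; in particular \ref{E19}--\ref{E22} show that $\lambda_1\neq0$, $\lambda_2=0$ forces $\lambda_{12}=0$, which is what really finishes case (a). Likewise, your assertions that the displayed entries plus the Hochschild identity ``yield exactly the three relations'' of (b), and that the mixed regime is killed by ``a single entry which every $e^\eta$ forces to vanish'', are unsupported: in the paper, (b) requires combining \ref{E4}, \ref{E8}--\ref{E14} and then \ref{E27}--\ref{E30}, and the mixed regime is excluded only as a corollary of Claim \ref{Claim:in-degree-3} (if $\lambda_1\lambda_2=0$ then $\lambda_{112}=\lambda_{122}=0$; if $\lambda_1\lambda_2\neq0$ then both $\lambda_{112},\lambda_{122}\neq0$), not by inspecting one table entry. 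Your case split and the \texttt{GAP}-based sufficiency check are fine, but the necessity argument must be rebuilt on a systematic comparison of the higher-degree rows.
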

		
		\begin{remark}
			The mono-parametric 2-cocycles $\sigma_{(0,0,0,1,0)}$ and $\sigma_{(0,0,0,0,1)}$ are pure.
		\end{remark}

%
%
%

		\subsubsection{Step (i)}\label{sec:stepi}
		We compute the $H$-linear morphisms $\alpha:\Bq_\bq\to \k$ with $\alpha(1)=1$. Let  $\{\delta_{x_2^{n_2}x_{12}^{n_{12}}x_1^{n_1}}:0\leq n_2, n_{12}, n_1\leq 2\}$ denote the dual basis of $\mathbb{B}_\bq$. We write
		\begin{align}\label{eqn:alph}
			\alpha=\sum_{\mathclap{0\leq n_2, n_{12}, n_1\leq 2}}\alpha_{(n_2, n_{12}, n_1)}\delta_{x_2^{n_2}x_{12}^{n_{12}}x_1^{n_1}}, \qquad \alpha_{(n_2, n_{12}, n_1)}\coloneqq\alpha(x_2^{n_2}x_{12}^{n_{12}}x_1^{n_1}).
		\end{align}

		\begin{lemma}\label{lem:alpha-definition}
			Let $\alpha\in U_H(\Bq_\bq^\ast)$ with $\alpha(1)=1$. Then
			\begin{align}
			\begin{split}\label{eqn:definition of alpha}
			\alpha=&\eps+\alpha_{(1,1,0)}\delta_{x_2x_{12}}+\alpha_{(2,0,1)}\delta_{x_2^2x_1}+\alpha_{(1,0,2)}\delta_{x_2x_1^2}+\alpha_{(0,1,1)}\delta_{x_{12}x_1}\\	&+\alpha_{(2,1,2)}\delta_{x_2^2x_{12}x_1^2}+\alpha_{(1,2,1)}\delta_{x_2x_{12}^2x_1}+\alpha_{(2,2,0)}\delta_{x_2^2x_{12}^2}+\alpha_{(0,2,2)}\delta_{x_{12}^2x_1^2}.
						\end{split}				\end{align}
The convolution inverse satisfies $\alpha^{-1}(1)=1$, $\alpha^{-1}(b)=-\alpha(b)$,  $\deg(b)=3$, and:
\begin{align*}
	\alpha^{-1}(x_2^2x_{12}x_1^2)&=-\alpha_{(2,1,2)}+\alpha_{(1,0,2)}\alpha_{(1,1,0)} + \alpha_{(2,0,1)}(\alpha_{(0,1,1)}+(q-q^2)\alpha_{(1,0,2)}),\\
	\alpha^{-1}(x_{12}^2x_1^2)&=-\alpha_{(0,2,2)}+(q-q^2)\alpha_{(0,1,1)}\alpha_{(1,0,2)}+\alpha_{(0,1,1)}^2,\\
	\alpha^{-1}(x_2^2x_{12}^2)&=-\alpha_{(2,2,0)}+(q-q^2)\alpha_{(2,0,1)}\alpha_{(1,1,0)}+\alpha_{(1,1,0)}^2,\\
	\alpha^{-1}(x_2x_{12}^2x_1)&=-\alpha_{(1,2,1)}-3\alpha_{(2,0,1)}\alpha_{(1,0,2)}+(q-q^2)\alpha_{(2,0,1)}\alpha_{(0,1,1)}\\
	&\qquad+(q-q^2)\alpha_{(1,0,2)}\alpha_{(1,1,0)}+\alpha_{(1,1,0)}\alpha_{(0,1,1)}.
\end{align*}
		\end{lemma}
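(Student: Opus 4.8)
The plan is to separate the two assertions: the shape \eqref{eqn:definition of alpha} of $\alpha$, which is forced by $H$-linearity, and the formulas for $\alpha^{-1}$, which follow from the graded coproduct together with the relation $\alpha\ast\alpha^{-1}=\eps$.

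For the first assertion I would start from the fact that $\k$ is the trivial $H$-module, so an $H$-linear functional $\alpha\colon\Bq_\bq\to\k$ satisfies $\alpha(h\cdot v)=\eps(h)\alpha(v)$ for all $h\in H$. Since the PBW monomial $b=x_2^{n_2}x_{12}^{n_{12}}x_1^{n_1}$ is an eigenvector for the $H$-action with character $\chi_1^{n_{12}+n_1}\chi_2^{n_2+n_{12}}$, this forces $\alpha(b)=0$ unless that character is $\eps$. In the atypical setting the quantum Serre relations are genuinely deformed, so by \eqref{eqn:restictions-lambda} the realization satisfies $\chi_1^2\chi_2=\chi_1\chi_2^2=\eps$; these two conditions give $\chi_1=\chi_2=:\chi$, with $\ord\chi=3$ (order exactly $3$, since $\chi(g_1)=q$). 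Hence $\chi_b=\chi^{\deg b}$ for the total degree $\deg$ in $x_1,x_2$, and the support of $\alpha$ consists precisely of the monomials of degree divisible by $3$. A direct enumeration over $0\le n_2,n_{12},n_1\le2$ — using that $x_1^3=x_2^3=x_{12}^3=0$ kills every would-be monomial $x_i^3$ — shows the only such basis elements are $1$, the four degree-$3$ monomials and the four degree-$6$ monomials in \eqref{eqn:definition of alpha}. Together with $\alpha(1)=1$ this yields the asserted shape.

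For the inverse I would first note that $\alpha^{-1}$ again lies in $(\Bq_\bq^\ast)^H$: the coproduct of $\Bq_\bq$ is a morphism in $\ydh$, hence $H$-linear, so $(\Bq_\bq^\ast)^H$ is a unital subalgebra of $(\Bq_\bq^\ast,\ast)$ closed under convolution inverses. Thus $\alpha^{-1}$ is also supported in degrees $0,3,6$, and I would compute it degree by degree from $\alpha\ast\alpha^{-1}=\eps$ using the grading of the coproduct. In degree $0$ this gives $\alpha^{-1}(1)=1$. For $b$ of degree $3$ the only splittings in $\underline{\Delta}(b)$ sit in bidegrees $(1,2)$ and $(2,1)$, where one of the two factors has degree not divisible by $3$ and is therefore annihilated; hence $0=\eps(b)=\alpha(b)+\alpha^{-1}(b)$, i.e. $\alpha^{-1}(b)=-\alpha(b)$. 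For $b$ of degree $6$ the only contributing splittings are those of bidegree $(3,3)$, so
\begin{align*}
0=\eps(b)=\alpha(b)+\alpha^{-1}(b)+\sum_{\deg b_{\underline{(1)}}=3}\alpha(b_{\underline{(1)}})\,\alpha^{-1}(b_{\underline{(2)}}),
\end{align*}
and substituting $\alpha^{-1}=-\alpha$ in degree $3$ gives $\alpha^{-1}(b)=-\alpha(b)+\sum\alpha(b_{\underline{(1)}})\alpha(b_{\underline{(2)}})$, the sum running over the bidegree-$(3,3)$ part of $\underline{\Delta}(b)$.

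The remaining work is to evaluate this last sum for each of the four degree-$6$ monomials. Here I would invoke \eqref{eqn:coproduct-preNichols}, which computes $\Delta$ on $\Bq_\bq$ for exponents $\le 2$ (the extra term $A$ of Lemma \ref{lem:Hypotheses-deformed-cartan-A2} involves $x_{112}$ and vanishes in $\Bq_\bq$), extract the terms whose two tensor factors both have degree $3$, and pair them via $\alpha\ot\alpha$. Since the only nonzero degree-$3$ PBW monomials are exactly the four in the support of $\alpha$, every such term contributes. For instance, for $b=x_{12}^2x_1^2$ the surviving $(3,3)$-terms are $C_{(0,1,1,1)}\,x_{12}x_1\ot x_{12}x_1$ and $C_{(0,2,1,0)}\,x_{12}x_1\ot x_2x_1^2$, and computing $C_{(0,1,1,1)}=1$ and $C_{(0,2,1,0)}=q-q^2$ (using $1+q+q^2=0$) reproduces the stated value $-\alpha_{(0,2,2)}+\alpha_{(0,1,1)}^2+(q-q^2)\alpha_{(0,1,1)}\alpha_{(1,0,2)}$. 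I expect the main obstacle to be purely the bookkeeping for the two monomials $x_2^2x_{12}x_1^2$ and $x_2x_{12}^2x_1$, whose reduced coproducts carry several $(3,3)$-splittings; there one must carefully collect the coefficients $C_{(j,k,l,m)}$ — this is where the scalars $3$ and $q-q^2$ in the final formulas originate — and I would organize the computation by the middle indices $(k,l)$ to keep the $q$-binomial simplifications under control.
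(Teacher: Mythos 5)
Your treatment of the convolution inverse is correct, and is in fact more detailed than the paper's own proof, which dismisses it with ``the formulas for $\alpha^{-1}$ follow directly.'' The observations that $(\Bq_\bq^\ast)^H$ is a convolution subalgebra (so $\alpha^{-1}$ is again invariant), that $\alpha^{-1}$ is supported in degrees $0,3,6$, that in degree $3$ no splitting of $\underline{\Delta}(b)$ survives (giving $\alpha^{-1}(b)=-\alpha(b)$), and that in degree $6$ only the bidegree-$(3,3)$ part of $\underline{\Delta}(b)$ contributes, are all sound; your sample computation for $x_{12}^2x_1^2$ (coefficients $C_{(0,1,1,1)}=1$, $C_{(0,2,1,0)}=q-q^2$) reproduces the stated value, and the same bookkeeping indeed yields the other three degree-$6$ formulas.

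The gap is in your justification of the shape \eqref{eqn:definition of alpha}. You assert that ``in the atypical setting the quantum Serre relations are genuinely deformed, so by \eqref{eqn:restictions-lambda} the realization satisfies $\chi_1^2\chi_2=\chi_1\chi_2^2=\eps$,'' and deduce $\chi_1=\chi_2=\chi$ with $\ord\chi=3$. This is not available: \eqref{eqn:restictions-lambda} gives $\chi_1^2\chi_2=\eps$ only when $\lambda_{112}\neq0$, and $\chi_1\chi_2^2=\eps$ only when $\lambda_{122}\neq0$, whereas Lemma \ref{lem:alpha-definition} concerns an arbitrary $H$-linear functional on $\Bq_\bq$ for a fixed realization of the all-$q$ braiding matrix, with no deformation parameters in its hypotheses. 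Moreover, the lemma is used in the proof of Proposition \ref{pro:Conclusion} precisely also in case (a), where $\lambda_{112}=\lambda_{122}=0$, and for the mono-parametric cocycles $\sigma_{(0,0,0,1,0)}$, $\sigma_{(0,0,0,0,1)}$, where at most one of the two character identities can be invoked; in all these situations your deduction $\chi_1=\chi_2$ collapses. The repair is exactly the paper's argument: evaluate the $H$-linearity condition only at the group-likes $g_1,g_2$ of the realization, where $\chi_j(g_i)=q_{ij}=q$ for all $i,j$; then $\alpha(x_2^{n_2}x_{12}^{n_{12}}x_1^{n_1})\neq0$ forces $q^{n_2+2n_{12}+n_1}=1$, i.e. $3\mid n_2+2n_{12}+n_1$, with no global identity among the characters $\chi_1,\chi_2$ needed. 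Note also that this yields (and the lemma asserts) only a \emph{containment} of the support in the nine listed monomials; your claim that the support consists ``precisely'' of the monomials of degree divisible by $3$ is stronger than what is needed, and is false in general for the same reason.
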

		\pf
		Recall that $\alpha$ is $H$-linear if and only if, 	for each $h\in H$,
		\begin{align*}
			\eps(h)\alpha(x_2^{n_2}x_{12}^{n_{12}}x_1^{n_1})
			=\chi_2^{n_2}(h_{(1)})\chi_{2}^{n_{12}}(h_{(2)})\chi_{1}^{n_{12}}(h_{(2)})\chi_1^{n_1}(h_{(3)})\alpha(x_2^{n_2}x_{12}^{n_{12}}x_1^{n_1}),
		\end{align*}
		In particular, if $\alpha(x_2^{n_2}x_{12}^{n_{12}}x_1^{n_1})\neq 0$ and $h\in\{g_1,g_2\}$, then $q^{n_2+2n_{12}+n_1}=1$, that is, $n_2 + 2n_{12} + n_1 \equiv 0 \pmod{3}$. Listing all such tuples with $0\leq n_2, n_{12}, n_1\leq 2$ we obtain the expression in \eqref{eqn:definition of alpha}.
The formulas for $\alpha^{-1}$ follow directly.
		\epf

		\subsubsection{Step (ii)}\label{sec:stepii}
		The calculation of the orbit $[\sigma_{\bs\lambda}]\coloneqq U_H(\Bq_\qb^\ast)\rightharpoonup\sigma_{\bs\lambda}$ is performed using \texttt{GAP}. For any $\alpha$ as in \eqref{eqn:definition of alpha}, we use Theorem \ref{thm:cocycle-quantum-serre-deformed} to compute the values
		\[
\alpha\rightharpoonup\sigma(w,z)=\alpha(w_{(1)})\alpha(z_{(1)})\eps_0(\gamma_{\bs\lambda}(w_{(2)})\gamma_{\bs\lambda}(z_{(2)}))\alpha^{-1}(w_{(3)}z_{(3)})
		\]
		for each $w=x_2^{a}x_{12}^{b}x_1^{c}$, $z=x_2^{d}x_{12}^{e}x_1^{f}\in\BB_\bq$. 
		The full set of values of the orbit is obtained in \href{https://github.com/JoseIgnacio25/Hopf-cocycles-of-Cartan-type-A2/blob/main/atypical.g}{atypical.g} via:
\begin{lstlisting}
gap> osigma(a,b,c,d,e,f);
\end{lstlisting}

In particular, we have the following modification of the table in Theorem \ref{thm:cocycle-quantum-serre-deformed}:
		\begin{lemma}\label{lem:orbit}
The nonzero values in the first rows of $[\sigma_{\bs\lambda}]$ are given by:
	\begin{center}
		{\begin{tabular}{|c|c|c|c|c|c|c|c|c|}
				\hline
				$[\sigma_{\bs\lambda}]$  & $x_2^2$ & $x_2x_1$ & $x_{12}$ & $x_1^2$ & $x_2^2x_{12}x_1$ & $x_2x_{12}^2$ & $x_2x_{12}x_1^2$& $x_{12}^2x_1$  \\
				\hline
				$x_2$  & $\ast$ & $T_{1,2}$ & $T_{1,3}$ & $T_{1,4}$ & $T_{1,5}$ & $\ast+T_{1,6}$ & $\ast+T_{1,7}$ & $ \ast+T_{1,8}$ \\
				\hline
				$x_1$  & $\ast+T_{2,1}$ & $ T_{2,2}$ & $\ast+T_{2,3}$ & $\ast$ & $T_{2,5}$ & $\ast+T_{2,6}$ & $\ast+T_{2,7}$ & $\ast+T_{2,8}$\\
				\hline
		\end{tabular}}
	\end{center}
Here $\ast$ stands for value of $\sigma_{\bs\lambda}$ in Theorem \ref{thm:cocycle-quantum-serre-deformed} and the scalars $T_{i,j}\in\k$ are:
\begin{align*}
	&T_{1,2}=-\alpha_{(2,0,1)},\qquad T_{1,3}=-\alpha_{(1,1,0)},\qquad\qquad\quad\quad\ \, T_{2,1}=-q^2\alpha_{(2,0,1)}+\alpha_{(1,1,0)},\\
	& T_{1,4}=-\alpha_{(1,0,2)},\qquad T_{2,2}=-q\alpha_{(1,0,2)}-\alpha_{(0,1,1)},\qquad T_{2,3}=-q^2\alpha_{(0,1,1)},\\
	&T_{1,5}=(q-q^2)(\lambda_2\alpha_{(1,0,2)}+\alpha_{(2,0,1)}\alpha_{(1,1,0)})+(q^2-1)\alpha_{(2,0,1)}^2,\\
	&T_{1,6}=-3\lambda_2\alpha_{(1,0,2)}+(q-q^2)(\lambda_2\alpha_{(0,1,1)}+\alpha_{(2,0,1)}\alpha_{(1,1,0)})-q\alpha_{(1,1,0)}^2-\alpha_{(2,2,0)},\\
	&T_{1,7}=-\alpha_{(2,0,1)}(3q^2\alpha_{(1,0,2)}+q\alpha_{(0,1,1)})-q\alpha_{(1,0,2)}\alpha_{(1,1,0)}-\alpha_{(2,1,2)},\\
	&T_{1,8}=\alpha_{(1,0,2)}(-3\alpha_{(2,0,1)}+(q-q^2)\alpha_{(1,1,0)})-q^2\alpha_{(1,1,0)}\alpha_{(0,1,1)}-\alpha_{(1,2,1)},\\
	&T_{2,5}=(q^2-q)(\lambda_{112}\alpha_{(2,0,1)}-\lambda_{122}\alpha_{(1,0,2)})+\alpha_{(2,0,1)}((1-q)\alpha_{(1,0,2)}+2q^2\alpha_{(0,1,1)})\\
	&\qquad\qquad-\alpha_{(1,1,0)}(\alpha_{(1,0,2)}-q\alpha_{(0,1,1)})-q\alpha_{(2,1,2)}+\alpha_{(1,2,1)},\\
	&T_{2,6}=(q-q^2)(\lambda_{112}\alpha_{(1,1,0)}+\lambda_{122}\alpha_{(0,1,1)})-\alpha_{(1,0,2)}(3\lambda_{122}+(1-q^2)\alpha_{(1,1,0)})\\
	&\qquad\qquad+2q\alpha_{(1,1,0)}\alpha_{(0,1,1)}-q^2\alpha_{(1,2,1)},\\
	&T_{2,7}=(q^2-q)(\lambda_1\alpha_{(2,0,1)}-\lambda_{112}\alpha_{(1,0,2)})-\alpha_{(1,0,2)}((1-q)\alpha_{(1,0,2)}+3q^2\alpha_{(0,1,1)})\\
	&\qquad\qquad-q\alpha_{(0,1,1)}^2-\alpha_{(0,2,2)},\\\
	&T_{2,8}=3\lambda_1\alpha_{(2,0,1)}+(q^2-q)\lambda_1\alpha_{(1,1,0)}+(q^2-q)\lambda_{112}\alpha_{(0,1,1)}-q\alpha_{(0,1,1)}^2-q\alpha_{(0,2,2)}.
\end{align*}
\qed
\end{lemma}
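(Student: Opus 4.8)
The plan is to evaluate the right-hand side of \eqref{eqn:action-on-cocycles}, written as
\[
(\alpha\rhu\sigma_{\bs\lambda})(x,y)=\alpha(x_{(1)})\,\alpha(y_{(1)})\,\eps_0\big(\gamma_{\bs\lambda}(x_{(2)})\gamma_{\bs\lambda}(y_{(2)})\big)\,\alpha^{-1}(x_{(3)}y_{(3)}),
\]
directly on each pair $(x,y)$ indexed by the first rows, and to show that three support conditions collapse the a priori large sum to a short list of surviving terms. These conditions are: the support of $\alpha$ and the formula for $\alpha^{-1}$ from Lemma \ref{lem:alpha-definition}; the support of $\eps_0(\gamma_{\bs\lambda}(\,\cdot\,)\gamma_{\bs\lambda}(\,\cdot\,))=\sigma_{\bs\lambda}$ from Theorem \ref{thm:cocycle-quantum-serre-deformed}; and the iterated coproduct, supplied by Lemma \ref{lem:coproduct in pre-Nichols} together with the extra summand $A$ from the proof of Lemma \ref{lem:Hypotheses-deformed-cartan-A2} whenever a factor $x_{12}^2$ occurs.

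First I would use that the left argument is always $x_1$ or $x_2$, hence primitive, so $(\Delta\ot\id)\Delta(x_i)=x_i\ot1\ot1+1\ot x_i\ot1+1\ot1\ot x_i$. Since $x_i$ has weight $\not\equiv0\pmod3$ we have $\alpha(x_i)=0$ while $\alpha(1)=1$; thus the factor $\alpha(x_{(1)})$ annihilates the first term and forces $x_{(1)}=1$, leaving only $(x_{(2)},x_{(3)})\in\{(x_i,1),(1,x_i)\}$. For the right argument $y=x_2^{m_2}x_{12}^{m_{12}}x_1^{m_1}$ (of degree at most four) I would expand $(\Delta\ot\id)\Delta(y)$ via \eqref{eqn:coproduct-preNichols}, impose $\alpha(y_{(1)})\neq0$ (restricting $y_{(1)}$ to $1$ or to one of the eight weight-zero monomials of \eqref{eqn:definition of alpha}), impose that $(x_{(2)},y_{(2)})$ lie in the support of $\sigma_{\bs\lambda}$, and read $\alpha^{-1}(x_{(3)}y_{(3)})$ off Lemma \ref{lem:alpha-definition}. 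Organizing by degree and weight of these intermediate factors, the ``diagonal'' splitting $x_{(1)}=y_{(1)}=x_{(3)}=y_{(3)}=1$ (available because $\Delta(y)$ contains $y\ot1$, hence $(\Delta\ot\id)\Delta(y)$ contains $1\ot y\ot1$) contributes $\alpha(1)^2\sigma_{\bs\lambda}(x,y)\alpha^{-1}(1)=\sigma_{\bs\lambda}(x,y)$, accounting for the entries marked $\ast$; all remaining splittings assemble into the correction $T_{i,j}$. The $\lambda$-dependent summands of $T_{1,5},\dots,T_{2,8}$ enter through the values of $\sigma_{\bs\lambda}$ occupying the $y_{(2)}$-slot, while the terms quadratic in the coordinates $\alpha_{(n_2,n_{12},n_1)}$ come from pairing $\alpha(y_{(1)})$ against $\alpha^{-1}(x_{(3)}y_{(3)})$.

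The main obstacle is combinatorial volume rather than any conceptual difficulty: for each higher-degree pair one must enumerate all coproduct splittings compatible with the three supports, carry the scalars $C_{(j,k,l,m)}$ and the extra coefficients $A(j,m)$ specialized at $q\in\G_3'$, and resolve the braiding powers of $q$, $q_{12}$, $q_{21}$. This is precisely the bookkeeping the author entrusts to \texttt{GAP} via the script \texttt{atypical.g}. To certify the normalization and signs I would verify by hand a representative pair of entries that mix both mechanisms---say $T_{1,5}$ and $T_{2,8}$, each combining a $\lambda$-term with an $\alpha$-quadratic term---and cite the script for the complete table.
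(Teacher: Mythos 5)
Your proposal is correct and coincides with the paper's own proof: the lemma is obtained precisely by evaluating $\alpha\rightharpoonup\sigma_{\bs\lambda}(x,y)=\alpha(x_{(1)})\alpha(y_{(1)})\eps_0\big(\gamma_{\bs\lambda}(x_{(2)})\gamma_{\bs\lambda}(y_{(2)})\big)\alpha^{-1}(x_{(3)}y_{(3)})$ on pairs of basis elements, using exactly the inputs you cite (Lemma \ref{lem:alpha-definition}, Theorem \ref{thm:cocycle-quantum-serre-deformed}, the coproduct formula), with the combinatorial bookkeeping delegated to the \texttt{GAP} script \texttt{atypical.g}. One minor simplification: the extra summand $A$ is not needed here, since the coproduct entering \eqref{eqn:action-on-cocycles} is that of $\Bq_\bq$, where $x_{112}=0$, so those terms project to zero --- including them is harmless but superfluous.
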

\begin{remark}\label{rem:notation-alphas}
To interpret or verify the values of $[\sigma_{\boldsymbol{\lambda}}]$—in particular, those displayed in the table of Lemma~\ref{lem:orbit}— using the script \href{https://github.com/JoseIgnacio25/Hopf-cocycles-of-Cartan-type-A2/blob/main/atypical.g}{atypical.g}, we clarify that the following notation was used in the code for the variables: 
\begin{align*}
l_i&=\lambda_i, i\in \I_2; &l_{12}&=\lambda_{12}; &l_{112}&=\lambda_{112}; &l_{122}&=\lambda_{122};\\
\alpha_{(2,0,1)}&=a_{221}; &\alpha_{(1,0,2)}&=a_{211};&\alpha_{(0,1,1)}&=a_{121};&\alpha_{(1,1,0)}&=a_{212};\\
\alpha_{(2,1,2)}&=a_{221211}; &\alpha_{(0,2,2)}&=a_{121211};&\alpha_{(1,2,1)}&=a_{212121}; &\alpha_{(2,2,0)}&=a_{221212}.
\end{align*}
Recall that the scalars $\alpha_{(n_2, n_{12}, n_1)}$ were introduced in \eqref{eqn:alph}.
\end{remark}

\subsubsection{Step (iii)}\label{sec:stepiii}
We describe $\Z^2_0(\Bq_\bq,\k)^H$ in Corollary \ref{cor:z2}. 
We begin by computing the subset of coboundaries in $\Z^2_0(\Bq_\bq,\k)^H$, namely, $\B_0^2(\Bq_\bq,\k)^H$. For $b\in\BB_\bq$, define the $\k$-linear map 
\[
\beta_{b}:\Bq_\bq\ot \Bq_\bq\to\k, \qquad \beta_{b}(r,s)=-\delta_{b}(rs), \ r,s\in \Bq_\bq^+
\]
and $\beta_{b}(r,1)=\beta_b(1,s)=0$, for any $r,s\in \Bq_\bq$. 
\begin{lemma}\label{lem:coboundaries}
The set 
$			\{\beta_{x_2x_{12}},\beta_{x_2^2x_1},\beta_{x_2x_1^2},\beta_{x_{12}x_1},\beta_{x_2^2x_{12}x_1},\beta_{x_2x_{12}^2x_1^2},\beta_{x_2^2x_{12}^2},\beta_{x_{12}^2x_1^2}\}
$
is a basis for $\B_0^2(\Bq_\bq,\k)^H$.
\end{lemma}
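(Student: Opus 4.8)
The plan is to recognize each $\beta_b$ as the normalized Hochschild coboundary $\partial\delta_b$ of the dual functional $\delta_b$, and then to extract the $H$-invariant part of $\B^2_0(\Bq_\bq,\k)$ by exploiting the semisimplicity of $H$. Write $C^1_0$ for the space of normalized $1$-cochains, that is, functionals $f\colon\Bq_\bq\to\k$ with $f(1)=0$, equivalently functionals on $\Bq_\bq^+$. The normalized differential is $(\partial f)(r,s)=\eps(r)f(s)-f(rs)+f(r)\eps(s)$, which for $r,s\in\Bq_\bq^+$ collapses to $-f(rs)$; comparing with the definition in the statement gives $\partial\delta_b=\beta_b$, so every $\beta_b$ is a coboundary, and it is $H$-invariant whenever $\delta_b$ is.

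First I would note that $\partial$ is $H$-equivariant, since the multiplication of $\Bq_\bq$ is $H$-linear and $\k$ is the trivial module. Because $H$ is semisimple, the functor of $H$-invariants is exact, so $\B^2_0(\Bq_\bq,\k)^H=(\operatorname{im}\partial)^H=\partial\big((C^1_0)^H\big)$. This reduces the whole computation to describing the invariant normalized $1$-cochains and the kernel of $\partial$ on that subspace. Next I would determine $(C^1_0)^H$: the functional $\delta_b$ with $b=x_2^{n_2}x_{12}^{n_{12}}x_1^{n_1}$ is $H$-invariant exactly when $b$ carries the trivial character, which by the very computation of Lemma \ref{lem:alpha-definition} amounts to $n_2+2n_{12}+n_1\equiv 0\pmod 3$. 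Listing the tuples in $\{0,1,2\}^3\setminus\{(0,0,0)\}$ satisfying this congruence yields precisely the eight indices appearing in \eqref{eqn:definition of alpha} (all of them except $\eps=\delta_1$, which contributes the zero normalized coboundary). Hence $(C^1_0)^H$ is $8$-dimensional with basis $\{\delta_b\}$, and the eight elements $\beta_b=\partial\delta_b$ already span $\B^2_0(\Bq_\bq,\k)^H$.

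It then remains to show linear independence, that is, that $\partial$ is injective on $(C^1_0)^H$. Its kernel is $\Z^1(\Bq_\bq,\k)$, the space of $\eps$-derivations; as $\Bq_\bq$ is connected and generated in degree one, such an $f$ vanishes on $1$ and on $(\Bq_\bq^+)^2$, hence is determined by its values on the indecomposables $x_1,x_2$. Both $x_1$ and $x_2$ have $H$-degree $1\not\equiv 0\pmod 3$, so no nonzero $\eps$-derivation is $H$-invariant, giving $\Z^1(\Bq_\bq,\k)\cap(C^1_0)^H=0$. Therefore $\partial$ is injective on $(C^1_0)^H$, the eight $\beta_b$ are linearly independent, and they form a basis.

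The only step demanding any care is the exactness argument, namely commuting $H$-invariants past $\operatorname{im}\partial$; once this is in place, both spanning and independence reduce to the elementary character count modulo $3$ and the observation that the degree-one generators sit in a nontrivial $H$-isotypic component. Should one prefer to avoid the homological formalism, an equivalent hands-on route is to verify directly that each listed $\beta_b$ is an $H$-invariant coboundary and to prove independence by a triangularity argument, evaluating $\beta_b$ on a chosen pair of PBW monomials whose product meets $b$ with nonzero coefficient (as supplied by Proposition \ref{pro:product}); the resulting "leading term" matrix is invertible.
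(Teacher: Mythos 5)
Your proposal is correct and follows essentially the same route as the paper: identify invariant normalized coboundaries with $H$-linear functionals vanishing at $1$, then invoke the character computation of Lemma \ref{lem:alpha-definition} to read off a basis. You additionally justify two points the paper leaves implicit --- commuting $H$-invariants past $\operatorname{im}\partial$ via semisimplicity, and linear independence via the absence of invariant $\eps$-derivations --- and your congruence count $n_2+2n_{12}+n_1\equiv 0\pmod 3$ correctly produces the monomials $x_2^2x_{12}x_1^2$ and $x_2x_{12}^2x_1$ of Lemma \ref{lem:alpha-definition}, indicating that the entries $x_2^2x_{12}x_1$ and $x_2x_{12}^2x_1^2$ in the statement are misprints.
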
 
\pf
By definition, each $\beta\in \B^2_0(\Bq_\bq,\k)^H\subset\Z_0^2(\Bq_\bq,\k)^H$ corresponds to a $H$-linear map $f=f_\beta:\Bq_\bq\to\k$ such that $f(1)=0$, so $\beta(r,s)=-f(rs)$ for $r,s\in\Bq_\bq^+$. 	In particular $\beta_{b}\in\B_0^2(\Bq_\bq,\k)$ for each $b\in\BB_\bq$. The result follows from Lemma \ref{lem:alpha-definition}.
\epf

Next we compute the cocycles. Following  \cite[\S 6.1]{MW}, we consider the Hochschild 2-cocycles $\eta_1,\eta_2,\eta_{12}$ associated with the ideal $\lg x_1^3,x_2^3,x_{12}^3\rg$ of relations in $\tilde{\Bq}$, {\it cf.}~\S\ref{sec:liftA2}: 
			\begin{align*}
		\eta_1(r, s)=\begin{cases}
			1, \text{ if } r=x_{1}^{n_1}, s=x_{1}^{3-{n_1}},\\
			0, \text{ otherwise;}
		\end{cases} 			\eta_2(r, s)=\begin{cases}
			1,\text{ if }r=x_{2}^{n_2}, s=x_{2}^{3-n_2},\\
			0,\text{ otherwise;}
		\end{cases}\\
		\eta_{12}(r, s)=\begin{cases}
			1, &\text{if } r=x_{12}^{n_{12}}, s=x_{12}^{3-{n_{12}}},\\
			({n_1})!_qq_{12}^{\frac{{n_1}^2-{n_1}}{2}}, & \text{if } r=x_{12}^{n_{12}}x_1^{n_1}, s=x_2^{n_1}x_{12}^{{m_{12}}}, {n_{12}}+{m_{12}}+{n_1}=3,\\
			0,& \text{otherwise.}
		\end{cases}
	\end{align*}
A straightforward computation shows that $\eta_i\in\Z^2(\Bq_\bq,\k)^H$ if and only if $\chi_i^N=\eps$, $i\in\I_2$, while 
$\eta_{12}\in\Z^2(\Bq_\bq,\k)^H$ if and only if $(\chi_1\chi_2)^N=\eps$; see  \cite[Theorem 6.1.3 and Theorem 6.1.4]{MW}.

We now introduce two additional invariant, non-trivial 2-cocycles, associated to the quantum Serre relations.
\begin{definition}
	Let $I_{112}, I_{122}\subset \BB_\bq\times \BB_\bq$ be the subsets
$I_{112}=\{(x_1^2,x_2),(x_1,x_{12})\}$ and $I_{122}=\{(x_1,x_2^2),(x_{12},x_2)\}$.	
	We let  $\eta_{112},\eta_{122}:\Bq_\bq\ot \Bq_\bq\to \k$ be the linear maps determined by, for each $(b,b')\in\BB_\bq\times \BB_\bq$:
	\begin{align*}
		\eta_{112}(b,b')&=\begin{cases}
			1\quad \text{if $(b,b')\in I_{112}$},\\
			0 \quad\text{elsewhere};
		\end{cases} &&
		\eta_{122}(b,b')=\begin{cases}
			1\quad \text{if $(b,b')\in I_{122}$},\\
			0 \quad \text{elsewhere}.
		\end{cases}
	\end{align*}
\end{definition}
\begin{lemma}\label{lem:eta112-eta122}
The maps $\eta_{112}$ and $\eta_{122}$ are Hochschild 2-cocycles for $\Bq_\bq$. Moreover, 
\begin{itemize}
\item $\eta_{112},\eta_{122}\notin \B^2(\Bq_{\bq},\k)$.
\item $\eta_{112}\in\Z^2(\Bq_\bq,\k)^H$ iff $\chi_1^2\chi_2=\eps$ and $\eta_{122}\in\Z^2(\Bq_\bq,\k)^H$ iff $\chi_1\chi_2^2=\eps$.
\end{itemize}
\end{lemma}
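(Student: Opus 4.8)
The plan is to handle the three assertions in turn, in each case exploiting the $\N$-grading of $\Bq_\bq$ (with $\deg x_1=\deg x_2=1$, so $\deg x_{12}=2$) together with the fact that each PBW monomial \eqref{eqn:basis} is an $H$-eigenvector, so as to reduce every verification to a finite check on generators. For the cocycle property I first note that $\eta_{112}$ and $\eta_{122}$ are normalized, vanishing whenever an argument equals $1$; hence the Hochschild identity $\eps(a)\eta(b,c)-\eta(ab,c)+\eta(a,bc)-\eta(a,b)\eps(c)=0$ reduces on $\Bq_\bq^+$ to $\eta(ab,c)=\eta(a,bc)$ for all $a,b,c\in\Bq_\bq^+$, the cases with a unit argument being automatic. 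Since $\eta_{112}$ is supported on $I_{112}$, whose entries have degrees $(2,1)$ and $(1,2)$, and since the product is degree-preserving, both sides vanish unless $a,b,c$ are homogeneous with $\deg a+\deg b+\deg c=3$; the only positive-degree option is $(1,1,1)$, so it suffices to test $a,b,c\in\{x_1,x_2\}$. This is a check of $2^3=8$ cases, each settled by rewriting products through $x_1x_2=x_{12}+q_{12}x_2x_1$ into the basis \eqref{eqn:basis} and reading off the coefficients of $x_1^2,x_1,x_2,x_{12}$; the same reduction applies verbatim to $\eta_{122}$ and $I_{122}$.

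To show neither is a coboundary, I would use that any coboundary $\delta f$ satisfies $(\delta f)(a,b)=-f(ab)$ for $a,b\in\Bq_\bq^+$. If $\eta_{112}=\delta f$, then evaluating on $(x_1,x_{12})\in I_{112}$ and on $(x_{12},x_1)\notin I_{112}$ gives $f(x_1x_{12})=-1$ and $f(x_{12}x_1)=0$; but the relation $x_{112}=0$ reads $x_1x_{12}=q_{11}q_{12}\,x_{12}x_1=q^2\,x_{12}x_1$ in $\Bq_\bq$, whence $f(x_1x_{12})=q^2f(x_{12}x_1)=0$, a contradiction. For $\eta_{122}$ one argues identically using $(x_{12},x_2)\in I_{122}$, $(x_2,x_{12})\notin I_{122}$ and the relation $x_{12}x_2=q_{22}q_{12}\,x_2x_{12}=q^2\,x_2x_{12}$.

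For invariance, since $\Bq_\bq$ is an $H$-module algebra with $h\cdot x_i=\chi_i(h)x_i$, the element $x_{12}$ has weight $\chi_1\chi_2$ and $x_2^{n_2}x_{12}^{n_{12}}x_1^{n_1}$ has weight $\chi_2^{n_2+n_{12}}\chi_1^{n_{12}+n_1}$. A cochain supported on $H$-homogeneous pairs is $H$-invariant precisely when the combined weight of each pair on which it is nonzero equals $\eps$. Both pairs in $I_{112}$ have combined weight $\chi_1^2\chi_2$ and both pairs in $I_{122}$ have combined weight $\chi_1\chi_2^2$, giving the two stated equivalences; since the cocycle property holds unconditionally, membership in $\Z^2(\Bq_\bq,\k)^H$ is equivalent to invariance.

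The only genuine computation is the cocycle check, and that is where I would be most careful: the reduction to the single triple of degrees $(1,1,1)$ is what makes it finite, and it relies on $\Bq_\bq$ being graded with a degree-preserving product, so that the homogeneous component $ab$ cannot produce spurious lower-degree terms that $\eta_{112}$ or $\eta_{122}$ might detect. Once the weights of the PBW monomials and the deformed quantum Serre relations \eqref{eqn:relations-nichols} are in hand, the remaining two steps are essentially bookkeeping.
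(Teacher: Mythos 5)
Your proof is correct. For the cocycle property and for $H$-invariance you argue essentially as the paper does: the degree-$3$ support of $\eta_{112},\eta_{122}$ reduces the Hochschild identity to $\eta(ab,c)=\eta(a,bc)$ with $a,b,c\in V$, where the only nontrivial instance is $\eta_{112}(x_1^2,x_2)=\eta_{112}(x_1,x_1x_2)$ (and its mirror for $\eta_{122}$), and invariance amounts to the supporting pairs having trivial total weight $\chi_1^2\chi_2$, resp.\ $\chi_1\chi_2^2$; your weight argument here fills in the detail the paper dismisses as ``clear''. The genuine divergence is in the non-coboundary step. The paper assumes $\eta_{122}(a,b)=f(ab)$ on $\Bq_\bq^+$ and evaluates at the pair $(x_1,x_2^2)$, rewriting $x_1x_2^2=q_{12}(1+q_{22})x_2x_{12}+q_{12}^2x_2^2x_1$ in the PBW basis to reach the contradiction $1=0$. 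You instead exploit the quantum Serre relation directly: since $x_1x_{12}=q_{11}q_{12}\,x_{12}x_1$ holds in $\Bq_\bq$, any linear functional must take proportional values on these two products, whereas $\eta_{112}=\delta f$ would force $f(x_1x_{12})=-1$ and $f(x_{12}x_1)=0$; the same argument with $x_{12}x_2=q_{22}q_{12}\,x_2x_{12}$ handles $\eta_{122}$. Your variant is shorter and isolates the conceptual obstruction --- the cochain assigns non-proportional values to words that are proportional in the algebra --- while the paper's computation stays within the PBW-rewriting routine it uses throughout \S\ref{sec:N3}. Both arguments are sound.
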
	
\pf
We treat $\eta = \eta_{112}$; the proof for $\eta_{122}$ is analogous.
Since $\eta$ is concentrated in degree 3, namely, $\eta_{|(\Bq_\bq\ot\Bq_\bq)_k}=0$ if $k\neq 3$, then we only need to show that $\eta(ab,c)=\eta(a,bc)$ for $a,b,c\in\Bq_\bq^+$ such that $\deg(abc)=3$, that is when $a,b,c\in V$. The only case where $\eta(ab,c)$ and $\eta(a,bc)$ are not zero is $a=b=x_1$ and $c=x_2$, moreover it is clear that $\eta(x_1^2,x_2)=\eta(x_1,x_1x_2)$. This shows $\eta\in\Z^2(\Bq_\bq,\k)$.

Next we show that $\eta_{112}, \eta_{122}\notin\B^2(\Bq_{\bq},\k)$. Assume there exists a linear map $f:\Bq_\bq\to\k$ such that $\eta_{122}(a,b)=f(ab)$, for any $a,b\in\Bq_\bq^+$. This gives $\eta_{122}(x_1,x_2^2)=0$, which is a contradiction. Indeed:
\begin{align*}
	\eta_{122}(x_1,x_2^2)&=f(x_1x_2^2)=f(x_{12}x_2+q_{12}x_2x_1x_2)\\
				&\stackrel{(\ast)}{=}q_{12}(1+q_{22})f(x_2x_{12})+q_{12}^2f(x_2^2x_1)\\
	&=q_{12}(1+q_{22})\eta_{122}(x_2,x_{12})+q_{12}^2\eta_{122}(x_2,x_2x_1)=0.
\end{align*}
Equality $(\ast)$ follows from the quantum Serre relation $x_{122}=0$ and the definition of $x_{12}$.
The proof for $\eta_{112}$ is analogous.

The last statement is clear since $h\cdot x_i=\chi_i(h)x_i$, for each $i\in\I_2$, $h\in H$.
\epf

		\begin{proposition}
The set of cohomology classes $\{[\eta_1],[\eta_2], [\eta_{12}], [\eta_{112}], [\eta_{122}]\}$ is a $\k$-linear basis of $\H^2(\Bq_\bq,\k)$.
		\end{proposition}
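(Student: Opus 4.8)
The plan is to compute $\H^2(\Bq_\bq,\k)$ through the standard identification with $\operatorname{Ext}$ and the theory of minimal graded resolutions, and then to match the five explicit cocycles to a basis one internal degree at a time. First I would recall that for the trivial module there is a natural isomorphism $\H^\bullet(\Bq_\bq,\k)\cong\operatorname{Ext}^\bullet_{\Bq_\bq}(\k,\k)$, obtained by reducing the Hochschild cochain complex with trivial coefficients to the cobar complex. Since $\Bq_\bq=T(V)/I$ is connected and $\N$-graded with $V=\langle x_1,x_2\rangle$ in degree $1$ (so that $x_{12}$ has degree $2$), this cohomology inherits an internal grading $\H^2(\Bq_\bq,\k)=\bigoplus_j\H^{2,j}(\Bq_\bq,\k)$. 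Among our cocycles, $\eta_1,\eta_2,\eta_{112},\eta_{122}$ are supported on pairs whose product has degree $3$, while $\eta_{12}$ is supported in degree $6$; I would therefore treat the degrees $3$ and $6$ separately and note that cross-degree independence is automatic.

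Next I would settle the dimension. For a connected graded algebra $T(V)/I$ generated in degree $1$, the minimal graded free resolution of $\k$ yields $\operatorname{Ext}^{2,j}_{\Bq_\bq}(\k,\k)\cong\big(I/(VI+IV)\big)^{\ast}_j$; that is, $\H^{2,j}$ is dual to the space of \emph{minimal} relations in degree $j$. By the presentation \eqref{eqn:relations-nichols} (cf.~the minimal presentation of Nichols algebras of type $A_2$ used in \cite{AAG}), the ideal $I$ is minimally generated by $x_1^3,x_2^3,x_{112},x_{122}$ in degree $3$ together with $x_{12}^3$ in degree $6$, and by nothing else. As $I$ starts in degree $3$ one has $(VI+IV)_3=0$, so $\H^{2,3}$ is dual to $I_3=\langle x_1^3,x_2^3,x_{112},x_{122}\rangle$ and is $4$-dimensional; minimality of $x_{12}^3$ gives $\dim\H^{2,6}=1$; and $\H^{2,j}=0$ otherwise. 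Hence $\dim_\k\H^2(\Bq_\bq,\k)=5$, and it suffices to prove that the five given classes are linearly independent.

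The independence I would establish by elementary evaluations. Working in degree $3$, suppose $c_1\eta_1+c_2\eta_2+c_{112}\eta_{112}+c_{122}\eta_{122}=\delta f$ is a coboundary, so its value on each pair $(a,b)$ with $a,b\in\Bq_\bq^+$ equals $\pm f(ab)$. Evaluating on $(x_1,x_1^2)$ and $(x_2,x_2^2)$, the products $x_1^3,x_2^3$ vanish in $\Bq_\bq$, so the coboundary vanishes there, while only $\eta_1$ (resp.~$\eta_2$) is nonzero on these pairs; this forces $c_1=c_2=0$. Evaluating on $(x_1^2,x_2)$ and $(x_1,x_2^2)$ and rewriting $x_1^2x_2$ and $x_1x_2^2$ in the PBW basis \eqref{eqn:basis} via $x_{112}=x_{122}=0$ — precisely the computation already used to show $\eta_{112},\eta_{122}\notin\B^2(\Bq_\bq,\k)$ — forces $f$ to be read off pairs lying outside $I_{112}\cup I_{122}$, where the surviving cocycles vanish, giving $c_{112}=c_{122}=0$. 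In degree $6$, testing a putative coboundary on $(x_{12},x_{12}^2)$ and using $x_{12}^3=0$ shows $[\eta_{12}]\neq0$, so it is a basis of the one-dimensional space $\H^{2,6}$. Together with the dimension count this proves the claim.

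The main obstacle is the single nonformal input in the dimension count: that $x_{12}^3$ is a \emph{genuine} minimal relation in degree $6$ and that no further minimal relations occur in any degree. This is exactly the minimality of the presentation \eqref{eqn:relations-nichols}, which I would import from the structural theory of Nichols algebras of type $A_2$; equivalently, it can be verified by comparing $\dim I_j$ with $\dim(VI+IV)_j$ through the relevant range, a finite check well suited to \texttt{GAP}. Everything else is the bookkeeping of the internal grading and the explicit evaluations above, which are direct refinements of the coboundary computations already carried out for $\eta_{112}$ and $\eta_{122}$.
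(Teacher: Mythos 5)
Your proof is correct, and it reaches the dimension count by a genuinely different route than the paper's. The paper simply quotes \cite[Lemma 5.4]{VK} for $\dim\H^2(\Bq_\bq,\k)=5$ and then rules out any relation $c_1\eta_1+c_2\eta_2+c_{12}\eta_{12}+c_{112}\eta_{112}+c_{122}\eta_{122}\in\B^2(\Bq_\bq,\k)$ by evaluating on suitable pairs; your independence step is the same argument, only organized by internal degree and written out explicitly. What you do differently is to derive the dimension from the minimal graded resolution of $\k$, via $\H^2(\Bq_\bq,\k)\cong\operatorname{Ext}^2_{\Bq_\bq}(\k,\k)\cong\bigl(I/(VI+IV)\bigr)^{\ast}$, so that $\dim\H^2$ equals the number of minimal defining relations. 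Two remarks on what you call the single nonformal input. First, the spanning half of it is free: since the five elements of \eqref{eqn:relations-nichols} generate $I$ by definition, their images automatically span $I/(VI+IV)$, so no minimal relations can occur in any other degree; in particular the proposed \texttt{GAP} check ``through the relevant range'' (for which you give no degree bound, so as stated it is not a finite verification) is unnecessary. Second, what genuinely needs proof is linear independence of those images: in degree $3$ this is trivial because $(VI+IV)_3=0$, and in degree $6$ it amounts to $x_{12}^3\notin\langle x_1^3,x_2^3,x_{112},x_{122}\rangle$, which follows from the structure of the distinguished pre-Nichols algebra recalled in \S\ref{sec:liftA2}: in $\tilde{\Bq}_{\qb}$ the elements $x_1^3,x_2^3$ are skew-central (here one uses $1+q+q^2=0$), so the ideal they generate is spanned by the PBW monomials of $\tilde{\BB}_\bq$ with $n_1\geq 3$ or $n_2\geq 3$, and $x_{12}^3$ survives in the quotient. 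With this observation your argument becomes fully self-contained, which is a real advantage over the paper's proof: the standing hypotheses of the Ginzburg--Kumar computation (order of the root of unity odd and larger than the Coxeter number) do not obviously cover $N=3$, which is exactly the Coxeter number of $A_2$, whereas your route needs no such assumption. What the paper's approach buys in exchange is brevity: a single citation replaces the resolution argument.
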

		\pf
By \cite[Lemma 5.4]{VK}, we have that $\dim \H^2(\Bq_\bq,\k)=5$.
Suppose there exists $\beta\in\B^2(\Bq_{\bq},\k)$ and $c_{1},c_{2},c_{12}, c_{112}, c_{122}\in\k$ such that
\begin{align}\label{eqn:beta-etas}
\beta=c_1\eta_1+c_2\eta_2+c_{12}\eta_{12}+c_{112}\eta_{112}+c_{122}\eta_{122}.
\end{align}
Since $\beta(r,s)=f(rs)$ for some linear map $f\colon\Bq_\qb\to \k$ and all $r,s\in \Bq_\qb^+$, we can conveniently evaluate \eqref{eqn:beta-etas} to deduce that $c_{1}=c_{2}=c_{12}=c_{112}=c_{122}=0$. Indeed, we first observe that $0=f(x_i^3)=\beta(x_i,x_i^2)=c_i$ for each $i\in \I_2$, and similarly $0=f(x_{12}^3)=\beta(x_{12},x_{12}^2)=c_{12}$. 
Next, evaluating \eqref{eqn:beta-etas} at $(x_2,x_{12})$, we obtain $0=\beta(x_2,x_{12})$, while $\beta(x_{12},x_2)=q_{22}q_{12}\beta(x_2,x_{12})$; hence $c_{122}=\beta(x_{12},x_2)=0$. By a similar argument, we deduce that $c_{112} = 0$.
		\epf
		
		\begin{corollary}\label{cor:z2}
			Any $\eta\in\Z_0^2(\Bq_\bq,\k)^H$ can be written as
			\begin{align}\label{eqn:general-eta}
				\eta=e_{1}\eta_1+e_{2}\eta_2+e_{12}\eta_{12}+e_{112}\eta_{112}+e_{122}\eta_{122}+\beta,
			\end{align}
			for some $e_1,e_2,e_{12},e_{112},e_{122}\in\k$ and $\beta\in\B_0^2(\Bq_\bq,\k)^H$.
		\end{corollary}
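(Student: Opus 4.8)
The plan is to combine the basis of $\H^2(\Bq_\bq,\k)$ furnished by the preceding Proposition with the exactness of the functor $(-)^H$ of taking $H$-invariants, which is available because $H$ is semisimple. Throughout I would work inside the normalized Hochschild complex, whose cohomology is $\H^2(\Bq_\bq,\k)$: every $\eta_i$ is concentrated in total degree $3$ and hence lies in $\Z_0^2(\Bq_\bq,\k)$, and each generator $\beta_b$ of Lemma \ref{lem:coboundaries} lies in $\B_0^2(\Bq_\bq,\k)$, so there is no discrepancy between the normalized and the full pictures.

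First I would record the $H$-action on the classes $[\eta_i]$. The $H$-module structure on cochains makes each $\eta_i$ a weight vector: by the lemmas above, $\eta_1,\eta_2,\eta_{12},\eta_{112},\eta_{122}$ are eigenvectors of weights $\chi_1^N,\chi_2^N,(\chi_1\chi_2)^N,\chi_1^2\chi_2,\chi_1\chi_2^2$ respectively, each being $H$-invariant exactly when its weight equals $\eps$. Since the $H$-action commutes with the Hochschild differential, these weights descend to $\H^2(\Bq_\bq,\k)$, so the basis $\{[\eta_i]\}$ consists of $H$-eigenvectors and the invariant subspace $\H^2(\Bq_\bq,\k)^H$ is precisely the span of those $[\eta_i]$ whose weight is trivial.

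Next, because $H$ is semisimple the functor $(-)^H$ is exact, so it identifies $\H^2(\Bq_\bq,\k)^H$ with $\Z_0^2(\Bq_\bq,\k)^H/\B_0^2(\Bq_\bq,\k)^H$; concretely, an averaging (Reynolds) operator is a morphism of complexes onto the invariant subcomplex inducing the projection of $\H^2(\Bq_\bq,\k)$ onto its invariant part. Given $\eta\in\Z_0^2(\Bq_\bq,\k)^H$, its class then lies in $\H^2(\Bq_\bq,\k)^H$, so by the previous paragraph there are scalars $e_i$, supported on the invariant indices, with $[\eta]=\sum_i e_i[\eta_i]$; I set $e_i=0$ for the non-invariant indices, so that $\sum_i e_i\eta_i$ stays inside $\Z_0^2(\Bq_\bq,\k)^H$. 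Finally $\beta\coloneqq\eta-\sum_i e_i\eta_i$ is a normalized Hochschild $2$-coboundary which is a difference of $H$-invariant cochains, whence $\beta\in\B_0^2(\Bq_\bq,\k)^H$; this is exactly the decomposition \eqref{eqn:general-eta}.

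The step I expect to require the most care is the compatibility of $H$-invariants with passage to cohomology: one must justify that the invariant part of a coboundary is again an invariant coboundary and that no invariant class is lost, which is precisely where semisimplicity of $H$ enters through the averaging operator. Once that is in place the argument is formal; the identification of the weights of the $[\eta_i]$ is then a direct reading of the lemmas preceding this corollary, and the remaining bookkeeping (normalization, padding the non-invariant coefficients by zero) is routine.
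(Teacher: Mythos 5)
Your proof is correct and follows essentially the route the paper intends: the corollary appears there without an explicit proof, as an immediate consequence of the preceding Proposition (the classes $[\eta_1],[\eta_2],[\eta_{12}],[\eta_{112}],[\eta_{122}]$ form a basis of $\H^2(\Bq_\bq,\k)$) together with Lemma \ref{lem:coboundaries}. Your extra care with $H$-invariance — using the weight decomposition of the cochain complex and the semisimplicity of $H$ to see that the coefficients $e_i$ are supported on the invariant indices and that $\beta=\eta-\sum_i e_i\eta_i$ is an invariant normalized coboundary — correctly supplies the bookkeeping the paper leaves implicit.
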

\begin{remark}
Observe that $\Z^2(\Bq_\bq,\k)^H=\k\eps+\Z^2_0(\Bq_\bq,\k)^H$.
\end{remark}

\subsubsection{Step (iv)}\label{sec:stepiv}
We compute the values of $e^\eta$ via \texttt{GAP}, via the file \href{https://github.com/JoseIgnacio25/Hopf-cocycles-of-Cartan-type-A2/blob/main/exponential.g}{exponential.g}. Specifically, if  $w=x_2^{a}x_{12}^{b}x_1^{c}$ and $z=x_2^{d}x_{12}^{e}x_1^{f}$, then
\begin{lstlisting}
gap> exponential(a,b,c,d,e,f);
\end{lstlisting}
returns the value $e^\eta(w,z)$, for any $a,b,c,d,e,f\in \I_2^{\circ\times 6}$ and $\eta$ as in \eqref{eqn:general-eta}.

\begin{lemma}\label{lem:first_line-e-eta}
	The nonzero values in the first rows of $e^\eta$ are as follows:

\noindent	
	\resizebox{12.6cm}{!}{
	\begin{tabular}{|c|c|c|c|c|c|c|c|c|}
		\hline
		$e^\eta$ & $x_2^2$ & $x_2x_1$ & $x_{12}$ & $x_1^2$ & $x_2^2x_{12}x_1$ & $x_2x_{12}^2$ & $x_2x_{12}x_1^2$ & $x_{12}^2x_1$ \\
		\hline
		$x_2$ &
		$e_2$ &
		$-b_{221}$ &
		$-b_{212}$ &
		$-b_{211}$ &
		$0$ &
		$-b_{221212}$ &
		$-b_{221211}$ &
		$-b_{212121}$ \\
		\hline
		$x_1$ &
		\makecell{ $e_{122}$ \\ $-q^2b_{221}$ \\ $+b_{212}$ } &
		\makecell{ $-qb_{211}$ \\ $-b_{121}$ } &
		\makecell{ $e_{112}$ \\ $-q^2b_{121}$ } &
		$e_1$ &
		\makecell{ $b_{212121}$ \\ $-qb_{221211}$ } &
		\makecell{ $e_{12}$ \\ $-q^2b_{212121}$ } &
		$-b_{121211}$ &
		$-q b_{121211}$ \\
		\hline
\end{tabular}}

\noindent
Here, the scalars $b_{\ast} \in \k$ are defined as follows:
\begin{align}
\notag	&-b_{212}=\beta(x_2,x_{12}), \quad-b_{221}=\beta{(x_2,x_2x_1)},\quad-b_{221211}=\beta{(x_2,x_2x_{12}x_1^2)},\\
\label{eqn:betas}	&-b_{212121}=\beta{(x_2,x_{12}^2x_1)},\quad-b_{211}=\beta{(x_2,x_1^2)},\quad-b_{121}=\beta{(x_{12},x_1)},\\
\notag	&-b_{221212}=\beta{(x_2,x_2x_{12}^2)},\quad -b_{121211}=\beta{(x_{12},x_{12}x_1^2)}.
\end{align}
\end{lemma}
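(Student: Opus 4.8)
The plan is to show that, on the first rows, the exponential collapses to a single evaluation of $\eta$, after which the table is a finite straightening computation in $\Bq_\bq$. The starting point is the primitivity of the generators: since $\Delta(x_i)=x_i\ot 1+1\ot x_i$, the iterated coproduct $\Delta^{(k-1)}(x_i)$ is a sum of elementary tensors, each carrying $x_i$ in one tensorand and $1$ in the remaining $k-1$. Consequently, for $k\geq 2$ every summand of $\eta^{\ast k}(x_i,y)=\eta(x_{i(1)},y_{(1)})\cdots\eta(x_{i(k)},y_{(k)})$ contains a factor $\eta(1,\cdot)$. Every generator of $\Z^2_0(\Bq_\bq,\k)^H$ in Corollary \ref{cor:z2} is normalized — indeed $\eta_j(1,\cdot)=0$ because $x_1^3=x_2^3=x_{12}^3=0$, and $\beta(1,\cdot)=0$ by definition — so $\eta(1,\cdot)=0$ and hence $\eta^{\ast k}(x_i,y)=0$ for all $k\geq 2$; since also $\eta^{\ast 0}(x_i,y)=\eps(x_i)\eps(y)=0$, the exponential reduces to $e^\eta(x_i,y)=\eta(x_i,y)$ for $i\in\I_2$ and $y\in\BB_\bq$. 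This is the conceptual core, and the reason the first rows are so much simpler than a generic row.

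Next I would localize the nonzero columns and evaluate them. Writing $\eta$ as in \eqref{eqn:general-eta}, each summand is concentrated in a single total degree: $\eta_1,\eta_2,\eta_{112},\eta_{122}$ in degree $3$, $\eta_{12}$ in degree $6$, and $\beta\in\B_0^2(\Bq_\bq,\k)^H$ in degrees $3$ and $6$ (the invariant PBW monomials have degree $3$ or $6$, see Lemma \ref{lem:coboundaries}). Hence $\eta(x_i,y)\neq 0$ forces $\deg y\in\{2,5\}$, which pins down exactly the eight columns of the table. For each such $y$ the cocycle contributions are read off directly from the defining formulas of the $\eta_j$ (producing $e_1,e_2,e_{112},e_{122}$ when $\deg y=2$ and $e_{12}$ when $\deg y=5$), while the coboundary contributes $\beta(x_i,y)=-f(x_iy)$, where $f$ is the $H$-linear map with $\beta(r,s)=-f(rs)$.

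The computational core is then to straighten each product $x_iy$ into the PBW basis \eqref{eqn:basis}. Here I would repeatedly apply the defining relations of $\Bq_\bq$, namely $x_1x_2=x_{12}+qx_2x_1$, $x_{12}x_2=q^2x_2x_{12}$ and $x_1x_{12}=q^2x_{12}x_1$ (the last two coming from $x_{112}=x_{122}=0$), together with $x_1^3=x_2^3=x_{12}^3=0$ and the identity $1+q+q^2=0$; for the degree-$5$ arguments, Lemma \ref{lem:x1-x2} organizes the reordering of an $x_1$ past a block of $x_2$'s. After straightening, the $H$-linearity of $f$ annihilates every non-invariant monomial, so only the eight invariant PBW monomials survive, and $f$ on these is recorded by the scalars $b_\ast$ of \eqref{eqn:betas}; collecting coefficients produces the table. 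This is precisely the bookkeeping automated in \texttt{exponential.g}.

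I expect the degree-$5$ columns to be the only genuine obstacle: there $x_iy$ lives in total degree $6$, and reducing it to PBW form takes several successive commutations whose $q$-powers must be tracked carefully and then collapsed using $q^3=1$ (so that, for instance, $q+q^2=-1$). By contrast, the degree-$2$ columns, as well as the vanishing of all entries outside the eight listed, fall out immediately from the reduction $e^\eta(x_i,y)=\eta(x_i,y)$ together with the degree count above.
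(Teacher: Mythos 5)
Your proposal is correct, and it takes a genuinely different route from the paper: the paper proves this lemma by machine computation (the script \texttt{exponential.g} evaluates $e^\eta=\sum_{k}\tfrac{1}{k!}\eta^{\ast k}$ on pairs of PBW basis elements), whereas you give a structural argument. Your key observation --- that $x_1,x_2$ are primitive and every $\eta\in\Z_0^2(\Bq_\bq,\k)^H$ is normalized, so for $k\geq 2$ every summand of $\eta^{\ast k}(x_i,\cdot)$ contains a factor $\eta(1,\cdot)=0$, whence $e^\eta(x_i,\cdot)=\eta(x_i,\cdot)$ --- is valid, and together with the degree count (the summands of \eqref{eqn:general-eta} are concentrated in total degrees $3$ and $6$) it explains \emph{a priori} three features that the \texttt{GAP} output only exhibits \emph{a posteriori}: the first rows are linear in the parameters $e_j$, $b_\ast$ (no products of parameters occur), the nonzero columns are exactly the four of degree $2$ and the four of degree $5$, and the entry at $(x_2,x_2^2x_{12}x_1)$ vanishes (since $x_2^3=0$ and $\eta_{12}(x_2,\cdot)=0$). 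One point of rigor: the convolution powers are taken in the braided coalgebra $\Bq_\bq\ot\Bq_\bq$, so your expression for $\eta^{\ast k}(x_i,y)$ should carry braiding scalars; this is harmless here because the braiding is trivial on tensor factors equal to $1$, so each summand still contains a vanishing factor $\eta(1,\cdot)$, but it deserves a sentence.

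There is, however, a discrepancy you would hit at the final step, which your write-up leaves unexecuted (``collecting coefficients produces the table''). Carried out literally with the paper's stated conventions ($\beta(r,s)=-f_\beta(rs)$ for $r,s\in\Bq_\bq^+$, and $b_\ast=\beta(\cdot,\cdot)$ as in \eqref{eqn:betas}), your procedure yields the printed table with every $b_\ast$ replaced by $-b_\ast$. For instance, all $\eta_j$ vanish on $(x_2,x_2x_1)$, so $e^\eta(x_2,x_2x_1)=\beta(x_2,x_2x_1)$, which is $+b_{221}$ under \eqref{eqn:betas}, while the table prints $-b_{221}$; likewise your straightening $x_1x_2^2=-x_2x_{12}+q^2x_2^2x_1$ gives $e^\eta(x_1,x_2^2)=e_{122}+q^2\beta(x_2,x_2x_1)-\beta(x_2,x_{12})$, against the printed $e_{122}-q^2b_{221}+b_{212}$. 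The printed table is consistent throughout with reading $b_\ast$ as the value of $f_\beta$ on the corresponding invariant monomial, i.e.\ $b_\ast=-\beta(\cdot,\cdot)$; so the lemma is correct up to this relabeling, and since the $b_\ast$ range freely over $\k$ as $\beta$ ranges over $\B_0^2(\Bq_\bq,\k)^H$, nothing downstream (in particular Proposition \ref{pro:Conclusion}) is affected. Still, a complete proof along your lines must either adopt that convention explicitly or flag the sign slip in \eqref{eqn:betas}; as written, your last step would not literally reproduce the stated table.
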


\begin{remark}\label{rem:truncated-exponential}
We actually define functions \texttt{exponential2},..., \texttt{exponential5} that compute the value of the {\it truncated} exponential $e_n^\eta\coloneqq\sum_{k=0}^n \frac{\eta^{\ast k}}{k!}$ up to degree $n$. By degree considerations, one has $e^\eta=e_5^\eta$. However, depending on the degrees of the basis elements $w,z\in\BB$, it is often sufficient to use a truncated version to reduce computational cost. For instance, if $\deg(w)+\deg(z)\leq 9$, then $e^\eta(w,z)=e^\eta_3(w,z)$.
\end{remark}

%

\subsection{Proof of Proposition \ref{pro:Conclusion}}\label{sec:proof} 
Suppose that there exist $\alpha\in U_H(\Bq_\qb^\ast)$, $\eta\in\Z_0^2(\Bq_\bq,\k)^H$, as in \eqref{eqn:general-eta}, such that $[\sigma_{\bs\lambda}]\coloneqq´\alpha\rightharpoonup\sigma_{\bs\lambda}=e^\eta$. We fix $\beta\in\B_0^2(\Bq_\bq,\k)^H$ and let $e_{j}\in\k$ be as in loc.cit.
We follow the notation for $[\sigma_{\bs\lambda}]$ introduced in Remark \ref{rem:notation-alphas} and that for $\beta$ introduced in \eqref{eqn:betas}.

We proceed in several steps.
		
		\noindent\textit{Step 1}: From $([\sigma_{\bs\lambda}]-e^\eta)(x_i,b)$, $i\in \I_2$, $b\in \BB_\bq$, we derive the following constraints:
		\begin{align*}
			&l_1=e_1;\qquad l_{2}=e_{2};\qquad l_{112}=e_{112};\qquad l_{122}=e_{122};\\
			&b_{221}=a_{221};\qquad b_{211}=a_{211};\qquad b_{121}=a_{121},\qquad b_{212}=a_{212};\\
			&b_{212121}=a_{212121}-3q^2l_{2}l_{1}+3a_{221}a_{211}+(q^2-q)a_{211}a_{212}+q^2a_{212}a_{121};\\
			&b_{221211}=a_{221211}+(q^2-q)l_2l_1+3q^2a_{221}a_{211}+qa_{221}a_{121}+qa_{211}a_{212};\\
			&b_{121211}=a_{121211}+(q^2-q)(l_{1}l_{122}-l_{1}a_{221}+l_{112}a_{211})+3q^2a_{211}a_{121}+qa_{121}^2\\
			&\qquad\qquad +(1-q)a_{211}^2;\\
			&b_{221212}=a_{221212}+3l_{2}a_{211}+(q^2-q)(l_{2}l_{112}+l_{2}a_{121}+a_{221}a_{212})+qa_{212}^2.
		\end{align*}

			\noindent\textit{Step 2}: 
		We update the previous configurations and continue comparing $\alpha\rightharpoonup\sigma$ and $e^\eta$ for $(a,b)\in\BB_\bq\ot\BB_\bq$ such that $\deg(a)+\deg(b)=3$. Below we list a number of equations along with their corresponding $(a,b)$.
		\begin{align}\label{E1}\tag{E1}
			& (x_2^2, x_2x_{12}x_1): l_{2}a_{211}+a_{221}^2=0.\\\label{E2}\tag{E2}
			&(x_2x_{12}x_1, x_1^2):  l_{1}a_{221}+a_{211}^2=0.\\\label{E3}\tag{E3}
			&(x_2^2, x_{12}^2): l_{2}a_{211}-\tfrac{1}{3}q^2a_{212}^2=0.\\\label{E4}\tag{E4}
			&(x_{12}x_1^2, x_1^2):  (q-1)l_{1}a_{211}+l_{1}a_{121}=0.\\\label{E5}\tag{E5}
			&(x_1^2, x_2x_{12}x_1):  l_{1}l_{122}-q^2l_{1}a_{221}+l_{1}a_{212}-q^2a_{211}^2-2qa_{211}a_{121}-a_{121}^2=0.\\\label{E6}\tag{E6}
			&(x_1^2, x_{12}^2): l_{1}l_{122}-q^2l_{1}a_{221}+l_{1}a_{212}+q^2l_{112}^2-2ql_{112}a_{211}+2q^2l_{112}a_{121}\\\notag
			&\qquad\qquad+2a_{211}^2+2(q^2-q)a_{211}a_{121}-a_{121}^2=0.\\\label{E7}\tag{E7}
			&(x_{12}^2, x_2^2): 3l_{2}l_{112}+3(q-1)l_{2}a_{211}+3l_{2}a_{121}+(q-1)l_{122}^2+2(q-1)l_{122}a_{212}\\\notag
			&\qquad\qquad-3a_{212}^2=0.\\\label{E8}\tag{E8}
			&(x_{12}^2x_1, x_2): 3(q^2-q)l_{2}l_{1}+3l_{112}a_{221}+(q^2-q)l_{112}a_{212}+(q^2-q)l_{122}a_{121}\\\notag
			&\qquad\qquad+3a_{221}a_{121}+(q^2-q)a_{212}a_{121}+l_{12}-e_{12}=0.\\\label{E9}\tag{E9}
			&(x_2^2, x_2^2x_{12}): (q-1)l_{2}a_{221}+l_{2}a_{212}=0.\\\label{E10}\tag{E10}
			&(x_1^2, x_{12}x_1^2): l_{1}l_{112}+(q-q^2)l_{1}a_{211}+2l_{1}a_{121}=0.\\\label{E11}\tag{E11}
			&(x_2^2x_{12}, x_2^2): l_{2}l_{122}+(q-q^2)l_{2}a_{221}+2l_{2}a_{212}=0.\\\label{E12}\tag{E12}
			&(x_2x_{12}^2, x_1):  3l_{2}l_{1}-3qa_{221}a_{211}+(q^2-1)a_{212}a_{121}=0.\\\label{E13}\tag{E13}
			&(x_2^2x_{12}x_1, x_2):  l_{2}l_{112}+2ql_{2}a_{211}+l_{2}a_{121}+l_{122}a_{221}-q^2a_{221}^2-a_{212}^2\\\notag
			&\qquad\qquad+(1-q)a_{221}a_{212}=0.\\\label{E14}\tag{E14}
			&(x_1^2, x_2x_{12}x_1):  l_{1}l_{122}-q^2l_{1}a_{221}+l_{1}a_{212}-q^2a_{211}^2-2qa_{211}a_{121}-a_{121}^2=0.
		\end{align}

		\begin{claim}\label{Claim:in-degree-3}
			The following holds:
			\begin{enumerate}[leftmargin=*]
				\item If $l_1l_2=0$, then $a_{221}=a_{211}=a_{121}=a_{212}=0$, $l_{112}=l_{122}=0$ and $l_{12}=e_{12}$.
				\item If $l_1l_2\neq 0$, then 
				\begin{align}\label{eqn:a221-a211}
					&a_{221}=-\tfrac{l_{122}}{3};&& a_{211}=-\tfrac{l_{112}}{3};\\\label{eqn:a212-a121}
					& a_{212}=\tfrac{(q-1)}{3}l_{122};&& a_{121}=\tfrac{(q-1)}{3}l_{112};\\\label{eqn:l2-l1}
					&l_2=\tfrac{1}{3}\tfrac{l_{122}^2}{l_{112}};&&l_1=\tfrac{1}{3}\tfrac{l_{112}^2}{l_{122}};
					\\\label{eqn:e12-l12}& e_{12}=l_{12}+\tfrac{(q-q^2)}{3}l_{112}l_{122}.&&
				\end{align}
			\end{enumerate}
		\end{claim}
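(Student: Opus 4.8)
The plan is to treat the claim as a purely algebraic statement about the simultaneous solutions of the system \eqref{E1}--\eqref{E14} (which, by Step 2, is already known to hold) and to extract the asserted values by elementary elimination. Throughout I will exploit that $q\in\G_3'$, so $1+q+q^2=0$; the two arithmetic facts I will lean on are $(1-q)^2=-3q$ and the coefficient identity $(q-q^2)+2(1-q)=3$, which together explain why the answers in \eqref{eqn:a221-a211}--\eqref{eqn:e12-l12} have denominator $3$. The argument splits according to whether $l_1l_2=0$ or $l_1l_2\neq 0$, matching the two items of the claim.

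For item (1), I would first kill the four parameters $a_{221},a_{211},a_{212},a_{121}$. If $l_1=0$, then \eqref{E2} forces $a_{211}=0$, whence \eqref{E1} gives $a_{221}=0$, \eqref{E3} gives $a_{212}=0$, and \eqref{E5} gives $a_{121}=0$; afterwards \eqref{E6} collapses to $q^2l_{112}^2=0$ and \eqref{E7} to $(q-1)l_{122}^2=0$, so $l_{112}=l_{122}=0$ (using $q\neq 1$). If instead $l_2=0$ with $l_1\neq 0$, the same \eqref{E1}, \eqref{E2}, \eqref{E3} yield $a_{221}=a_{211}=a_{212}=0$, while \eqref{E4} forces $a_{121}=0$, and then \eqref{E10} and \eqref{E5} give $l_{112}=0$ and $l_{122}=0$. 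In either sub-case, substituting all these vanishings into \eqref{E8} leaves exactly $l_{12}-e_{12}=0$, which is the final assertion of item (1).

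For item (2), assume $l_1l_2\neq 0$. The first point is to check that $l_{112}l_{122}\neq 0$, so that the later divisions are legitimate: if, say, $l_{112}=0$, then \eqref{E4} and \eqref{E10} combine (via $(q-q^2)+2(1-q)=3$) to force $a_{211}=a_{121}=0$, then \eqref{E2} gives $a_{221}=0$ and \eqref{E3} gives $a_{212}=0$, at which point \eqref{E12} reads $3l_1l_2=0$, contradicting the hypothesis. Granting $l_{112}l_{122}\neq 0$, dividing \eqref{E9} by $l_2$ and \eqref{E4} by $l_1$ gives the proportionalities $a_{212}=(1-q)a_{221}$ and $a_{121}=(1-q)a_{211}$; inserting these into \eqref{E11} and \eqref{E10} and again using $(q-q^2)+2(1-q)=3$ collapses them to $l_{122}+3a_{221}=0$ and $l_{112}+3a_{211}=0$, which are precisely \eqref{eqn:a221-a211}, and \eqref{eqn:a212-a121} then follows. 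Feeding the now-known $a$'s back into \eqref{E1} and \eqref{E2} produces $l_2l_{112}=\tfrac13 l_{122}^2$ and $l_1l_{122}=\tfrac13 l_{112}^2$, i.e. \eqref{eqn:l2-l1}.

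The one genuinely laborious step, and the place where an arithmetic slip is most likely, is the last identity \eqref{eqn:e12-l12}: I would substitute the six now-determined quantities $l_1l_2=\tfrac19 l_{112}l_{122}$, $a_{221}$, $a_{211}$, $a_{212}$, $a_{121}$ into \eqref{E8} and check that the coefficient of $l_{112}l_{122}$ equals $\tfrac13(q-q^2)$. This is a finite computation in $\mathbb{Z}[q]/(1+q+q^2)$; the reductions $(q^2-q)(q-1)=-3q^2$, $(1-q)^2=-3q$ and $q^3=1$ bring the six contributions to a total of $\tfrac13(1+2q)=\tfrac13(q-q^2)$, giving $e_{12}=l_{12}+\tfrac13(q-q^2)l_{112}l_{122}$. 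I expect keeping these six summands and their $q$-powers straight to be the main obstacle, so I would lay them out in a short table rather than expanding inline.
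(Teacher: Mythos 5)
Your proof is correct and proceeds by essentially the same elimination argument as the paper: the same case split on $l_1l_2$, and the same key substitutions (\eqref{E9} into \eqref{E11}, and \eqref{E4} into \eqref{E10}) to obtain \eqref{eqn:a221-a211} and \eqref{eqn:a212-a121}. The only differences are organizational and slightly streamline the argument: you derive \eqref{eqn:l2-l1} from the shorter equations \eqref{E1}--\eqref{E2} where the paper uses \eqref{E13}--\eqref{E14} after first extracting $l_1l_2=\tfrac19 l_{112}l_{122}$ from \eqref{E12}, and you legitimize the divisions by $l_{112},l_{122}$ upfront via a contradiction argument, a point the paper settles only a posteriori from the relation $l_1l_2=\tfrac19 l_{112}l_{122}$.
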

		
		We start by assuming $l_1l_2=0$. On the one hand, if $l_1=0$, then equations \ref{E1}-\ref{E3} and \ref{E5}-\ref{E7} yield  $a_{211}=a_{221}=a_{212}=a_{121}=0$ and $l_{112}=l_{122}=0$. In turn, if $l_1\neq 0$, then \ref{E1}-\ref{E4} and \ref{E6}-\ref{E7} imply $a_{211}=a_{221}=a_{212}=a_{121}=0$ and $l_{112}=l_{122}=0$. Equation \ref{E8} establishes $l_{12}=e_{12}$ in each case. This proves {\it (1)}.
		
		To prove  {\it (2)}, assume $l_1l_2\neq0$. Then, the identities in \eqref{eqn:a221-a211} follow by substituting \ref{E9} into \ref{E11}, and \ref{E4} into \ref{E10}, respectively. The formulas in \eqref{eqn:a212-a121} are obtained by plugging \eqref{eqn:a221-a211} into \ref{E9} and \ref{E4}, respectively.
		Next, we use \eqref{eqn:a221-a211} and \eqref{eqn:a212-a121}
		in \ref{E12} to get $l_2l_1=\tfrac{1}{9}l_{112}l_{122}$.
		Replacing all $a_{221}$, $a_{212}$, $a_{112}$, and $a_{121}$ in \ref{E13} and \ref{E14}, we obtain, respectively, the equalities in \eqref{eqn:l2-l1}.
		Finally, \eqref{eqn:e12-l12} is obtained analogously by replacing $l_1$ and $l_2$ in \ref{E8}. This completes the proof of {\it (2)}.

		If $l_2l_1=\tfrac{1}{9}l_{112}l_{122}$, then $l_{112}l_{122}=0$ implies $l_{1}l_{2}=0$. Hence we land in case {\it (1)}, and thus $l_{112}=l_{122}=0$. Therefore,  $l_{112}l_{122}=0$ if and only if $l_{112}=l_{122}=0$, and $l_{112}l_{122}\neq0$ if and only if $l_1l_2\neq 0$.

\noindent{\it Step 3}: We consider the conditions in each case given by Claim \ref{Claim:in-degree-3} and establish new equations in higher degrees. As above, when $l_1l_2=0$, we analyze separately the cases $(l_1\neq 0, l_2= 0)$, $(l_1= 0,l_2\neq 0)$ and $(l_1= 0,l_2=0)$. 
		\begin{claim}\label{Claim:in-degree-9}
			The following holds:
			\begin{enumerate}[leftmargin=*]
				\item If $l_2\neq 0$ and $l_1= 0$, then $a_{121211}=a_{221211}=a_{212121}=l_{12}=0$.
				\item If $l_1\neq 0$ and $l_2= 0$, then $a_{221212}=a_{221211}=a_{212121}=l_{12}=0$.
				\item If $l_1= 0$ and $l_2=0$, then $a_{212121}=qa_{221211}$.
				\item If $l_1l_2\neq0$ then
				\begin{align}\label{eqn:a212121}
					a_{212121}&=\tfrac{(1-q)}{9}l_{112}l_{122}+qa_{221211}, 
					& a_{221212}&=\tfrac{(q-q^2)}{9}l_{122}^2+\tfrac{l_{122}}{l_{112}}a_{221211},\\\label{eqn:a121211}
					a_{121211}&=\tfrac{(q-q^2)}{9}l_{112}^2+\tfrac{l_{112}}{l_{122}}a_{221211},
					&l_{12}&=\tfrac{(q^2-q)}{3}l_{112}l_{122}.
				\end{align}
			\end{enumerate}
		\end{claim}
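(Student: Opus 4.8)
The plan is to mirror the argument of Claim \ref{Claim:in-degree-3} one degree higher: having fixed all degree-$3$ data, I would compare $\alpha\rightharpoonup\sigma_{\bs\lambda}$ and $e^\eta$ on pairs $(a,b)\in\BB_\bq\times\BB_\bq$ with $\deg(a)+\deg(b)=9$, so that the four remaining unknowns $a_{212121},a_{221211},a_{221212},a_{121211}$ (the values of $\alpha$ on the degree-$6$ basis elements, see Lemma \ref{lem:alpha-definition}) together with $l_{12}$ enter linearly. Concretely, I would expand the recuperation formula
\[
\alpha\rightharpoonup\sigma(a,b)=\alpha(a_{(1)})\alpha(b_{(1)})\,\eps_0(\gamma_{\bs\lambda}(a_{(2)})\gamma_{\bs\lambda}(b_{(2)}))\,\alpha^{-1}(a_{(3)}b_{(3)})
\]
by substituting the coproduct of Lemma \ref{lem:coproduct in pre-Nichols} (corrected by the extra term $A$ of Lemma \ref{lem:Hypotheses-deformed-cartan-A2} when $n_{12}=2$), the cocycle values of Theorem \ref{thm:cocycle-quantum-serre-deformed}, and the degree-$6$ formulas for $\alpha^{-1}$ from Lemma \ref{lem:alpha-definition}; the matching exponential values are read off from Lemma \ref{lem:first_line-e-eta}. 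As in Step~2 this is performed in \texttt{GAP}, and each chosen degree-$9$ pair produces one scalar equation, the exact analogue of \ref{E1}--\ref{E14}.

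Next I would record a sufficient list of these degree-$9$ equations and feed in the conclusions of Claim \ref{Claim:in-degree-3}. When $l_1l_2=0$ one has $a_{221}=a_{211}=a_{121}=a_{212}=0$ and $l_{112}=l_{122}=0$, so most coefficients collapse; splitting this into the subcases $(l_2\neq0,l_1=0)$, $(l_1\neq0,l_2=0)$ and $(l_1=l_2=0)$ exactly as in Step~3, the surviving equations force the stated vanishings, namely $a_{121211}=a_{221211}=a_{212121}=l_{12}=0$ in (1), $a_{221212}=a_{221211}=a_{212121}=l_{12}=0$ in (2), and the single relation $a_{212121}=qa_{221211}$ in (3). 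When $l_1l_2\neq0$ I would instead substitute the degree-$3$ formulas \eqref{eqn:a221-a211}, \eqref{eqn:a212-a121} and \eqref{eqn:l2-l1}; the resulting linear system is then solvable and expresses $a_{212121},a_{221212},a_{121211}$ in terms of $l_{112},l_{122}$ and of the single remaining parameter $a_{221211}$, while simultaneously pinning down $l_{12}$, which yields \eqref{eqn:a212121}--\eqref{eqn:a121211}.

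The routine but heavy part is the evaluation of the many degree-$9$ cocycle and exponential values, which is exactly why it is delegated to \texttt{GAP}; the genuine obstacle is organizational. One must select enough independent degree-$9$ pairs to determine the four degree-$6$ parameters and $l_{12}$, while checking that no chosen pair produces a contradictory constraint and that the four cases stay mutually exclusive and exhaustive, consistently with the dichotomy $l_{112}l_{122}=0\Leftrightarrow l_1l_2=0$ established at the close of Step~2. The main delicacy is that in case (4) the system is rank-deficient by one, reflecting the unavoidable coboundary freedom: I would therefore have to verify that $a_{221211}$ is genuinely free rather than over-constrained, which amounts to confirming the compatibility of \emph{every} degree-$9$ equation in which it occurs.
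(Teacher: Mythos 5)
Your overall strategy --- \texttt{GAP}-generated equations obtained by comparing $\alpha\rightharpoonup\sigma_{\bs\lambda}$ with $e^\eta$ on pairs of basis elements, then specializing via Claim \ref{Claim:in-degree-3} and solving case by case --- is the paper's strategy, and it does work for parts (1), (2) and (4), which the paper indeed derives from degree-$9$ pairs only (equations \ref{E15}--\ref{E22} and \ref{E27}--\ref{E30}). However, your proposal fails for part (3). You restrict by design to pairs with $\deg(a)+\deg(b)=9$ precisely so that the degree-$6$ unknowns enter \emph{linearly}, i.e.\ so that each occurrence of $a_{212121},a_{221211},a_{221212},a_{121211}$ carries a coefficient built from the already-determined degree-$3$ data ($l_1,l_2,l_{112},l_{122}$ and the degree-$3$ values of $\alpha$). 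In case (3) all of those coefficients vanish: by Claim \ref{Claim:in-degree-3}(1) one has $l_{112}=l_{122}=0$ and $a_{221}=a_{211}=a_{121}=a_{212}=0$, and with these values both sides of every degree-$9$ equation are identically zero --- on the cocycle side only $\lambda_{12}$ survives, and $\sigma_{\bs\lambda}$ is then supported in even total degrees; on the exponential side $\eta$ has no degree-$3$ component left and the coboundary contribution vanishes on degree-$9$ pairs because $\Bq_\bq$ has top degree $8$, so all such products are zero. Hence every degree-$9$ equation reads $0=0$, and the relation $a_{212121}=qa_{221211}$ cannot be extracted from them.

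This is exactly why the paper passes to pairs of total degree $12$ for this subcase: its equations \ref{E23}--\ref{E26}, e.g.\ $((q^2-1)a_{221211}+(q^2-q)a_{212121})a_{221211}=0$ and $((q^2-1)a_{221211}+(q^2-q)a_{212121})a_{212121}=0$, are \emph{quadratic} in the degree-$6$ unknowns, and this nonlinearity is essential: if the common factor $(q^2-1)a_{221211}+(q^2-q)a_{212121}$ were nonzero, these equations would force all four unknowns to vanish, a contradiction; hence the factor vanishes, which (using $q^2+q+1=0$) is precisely $a_{212121}=qa_{221211}$. So to repair your argument you must enlarge the set of test pairs beyond total degree $9$ and drop the linearity requirement in the subcase $l_1=l_2=0$; the other three cases go through as you describe.
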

	Observe that the last equation implies $e_{12}=0$.
		Assume that $l_1l_2=0$ and recall all the constraints from  Claim \ref{Claim:in-degree-3}{\it (1)}. We write here the most relevant equations:
		\begin{align}\label{E15}\tag{E15}
			&(x_{12},x_2^2x_{12}^2x_1): (q^2-q)l_2a_{121211}=0.\\\label{E16}\tag{E16}
			&(x_2x_{12},x_2^2x_{12}^2): -3l_2a_{221211}+(q^2-q)l_2a_{212121}=0.\\\label{E17}\tag{E17}
			&(x_2x_{12}^2,x_2^2x_{12}): 3q^2l_2l_{12}+3ql_2a_{221211}+(q^2-1)l_2a_{212121}=0.\\\label{E18}\tag{E18}
			&(x_2^2x_1,x_2^2x_{12}^2):  (q^2-q)l_2a_{221211}=0.\\
			\label{E19}\tag{E19}&(x_{12},x_2x_{12}^2x_1^2): (q^2-1)l_1a_{221212}=0.\\\label{E20}\tag{E20}
			&(x_{12}^2x_1^2,x_2x_1^2): (q-q^2)l_1a_{221211}=0.\\\label{E21}\tag{E21}
			&(x_{12}^2x_1^2,x_{12}x_1): 3l_1a_{221211}+(q-q^2)l_1a_{212121}=0.\\\label{E22}\tag{E22}
			&(x_1^2,x_2x_{12}^2x_1^2): (q-q^2)l_1l_{12}+3l_1a_{221211}+2(q-q^2)l_1a_{212121}=0.\\\label{E23}\tag{E23}
			&(x_{12}^2x_1,x_2x_{12}^2x_1^2): ((q^2-1)a_{221211} + (q^2-q)a_{212121})a_{121211}=0.\\\label{E24}\tag{E24}
			&(x_2x_{12}^2,x_2^2x_{12}^2x_1):((q-1)a_{221211}+(q^2-1)a_{212121})a_{221212}=0.\\\label{E25}\tag{E25}
			&(x_2^2x_{12}x_1,x_2x_{12}^2x_1^2): ((q^2-1)a_{221211}+(q^2-q)a_{212121})a_{221211}=0.\\\label{E26}\tag{E26}
			&(x_2x_{12}^2,x_2x_{12}^2x_1^2): ((q^2-1)a_{221211}+(q^2-q)a_{212121})a_{212121}=0.
		\end{align}
		Then {\it (1)}, {\it (2)} and {\it (3)} follow by \ref{E15}-\ref{E18}, \ref{E19}-\ref{E22} and \ref{E23}-\ref{E26}, respectively.

		On the other hand, suppose that $l_1l_2\neq0$. Then, by Claim \ref{Claim:in-degree-3}{\it (2)}, we get:
		\begin{align}\label{E27}\tag{E27}
			&(x_2x_1^2, x_2x_{12}^2x_1): \tfrac{(q-1)}{9}l_{112}l_{122}-qa_{221211}+a_{212121}=0.\\\label{E28}\tag{E28}
			&(x_2^2x_{12}^2, x_{12}x_1):  a_{221211}+\tfrac{(q-q^2)}{3}a_{212121}+\tfrac{(q-1)}{3}\tfrac{l_{122}}{l_{112}}a_{121211}=0.\\\label{E29}\tag{E29}
			&(x_2x_{12}, x_2^2x_{12}x_1^2): \tfrac{l_{112}}{l_{122}}a_{221212}+(q^2-1)a_{221211}-qa_{212121}=0.\\\label{E30}\tag{E30}
			&(x_{12}x_1^2, x_{12}^2x_1):  l_{112}l_{122}+(q^2-q)l_{12}+(q-1)a_{221211}\\			\notag &\qquad\qquad\qquad
			\qquad\qquad\qquad+q^2a_{212121}		-\tfrac{l_{122}}{l_{112}}qa_{121211}=0.
		\end{align}
		We substitute  $a_{212121}$ in equations \ref{E28} and \ref{E29} with the expression found in \ref{E27}. This gives \eqref{eqn:a212121} and the left identity in \eqref{eqn:a121211}. For the remaining one, we use this new information and replace it in \ref{E30}. Hence, the claim follows. Therefore, we have proved the necessary condition part in Proposition \ref{pro:Conclusion}. 
		
		To prove the sufficient condition part, we use the constraints derived in the {\it First}, {\it Second} and {\it Third} steps to define $\alpha$ and $\eta$ in each case. We then introduce these definitions into the codes \href{https://github.com/JoseIgnacio25/Hopf-cocycles-of-Cartan-type-A2/blob/main/atypical.g}{atypical.g} and \href{https://github.com/JoseIgnacio25/Hopf-cocycles-of-Cartan-type-A2/blob/main/exponential.g}{exponential.g}. Finally, we verify that the tables of $\alpha\rightharpoonup\sigma_{\bs\lambda}$ and $e^\eta$ coincide. \qed
		
\begin{remark}

The proof of Proposition \ref{pro:Conclusion} shows that coboundaries are essential to account for all exponential Hopf 2-cocycles, even up to equivalence. For instance, let $\eta\in\Z_0^2(\Bq_\bq,\k)^H$ be as in \eqref{eqn:general-eta} with coefficients $e_{112}=e_{122}=1$, $e_1=e_2=\frac{1}{3}$ and $e_{12}=0$. Then $e^{\eta}$ defines a Hopf 2-cocycle only if the following identity holds:
\begin{align*}
	\beta=\tfrac{(q-1)}{3}(\beta_{x_2x_{12}}+\beta_{x_{12}x_1})-\tfrac{1}{3}(\beta_{x_2^2x_1}+\beta_{x_2x_1^2})+\kappa(\beta_{x_2^2x_{12}^2}+\beta_{x_{12}^2x_1^2}+q\beta_{x_2x_{12}^2x_1}),
\end{align*}
where $\kappa\coloneqq\beta(x_2,x_2x_{12}x_1^2)\in \k$.
\end{remark}


	\end{document}